\definecolor{gray}{gray}{.75}
\definecolor{gray2}{gray}{.50}
\newcommand{\one}{\mathbf{1}}
\newcommand{\dom}{\mathcal{D}}
\newcommand{\domrj}{\mathcal{D}_{j,\min}}
\newcommand{\domaj}{\mathcal{D}_{j,\max}}
\newcommand{\domm}{\widetilde{\mathcal{D}}}
\newcommand{\B}{\mathcal{B}}
\newcommand{\Z}{\mathbb{Z}}
\newcommand{\N}{\mathbb{N}}
\newcommand{\R}{\mathbb{R}}
\newcommand{\C}{\mathbb{C}}
\newcommand{\D}{\nabla}
\newcommand{\DD}{\widetilde{\nabla}}
\newcommand{\G}{\Gamma}
\newcommand{\GG}{\widetilde{\Gamma}}
\newcommand{\A}{\mathrm{\alpha}}
\newcommand{\w}{\mathrm{\omega}}
\newtheorem{thm}{Theorem}[section]
\newtheorem{cor}[thm]{Corollary}
\newtheorem{remark}[thm]{Remark}
\newtheorem{prop}[thm]{Proposition}
\newtheorem{lemma}[thm]{Lemma}
\numberwithin{equation}{section}
\begin{document}

\title[Gluing Formula for Refined Analytic Torsion]
{Gluing Formula for Refined Analytic Torsion}
\author{Boris Vertman}
\address{University of Bonn \\ Department of Mathematics \\
Beringstr. 6\\ 53115 Bonn\\ Germany}
\email{vertman@math.uni-bonn.de}

\thanks{2000 Mathematics Subject Classification. 58J52.}

\begin{abstract}
{In a previous article we have presented a construction of refined analytic torsion in the spirit of Braverman and Kappeler, which does apply to compact manifolds with and without boundary. We now derive a gluing formula for our construction, which can be viewed as a gluing law for the original definition of refined analytic torsion by Braverman and Kappeler.}
\end{abstract}

\maketitle

\pagestyle{myheadings}
\markboth{\textsc{Gluing Formula}}{\textsc{Boris Vertman}} 

\section{Introduction}\
\\[-3mm] In this publication we turn to the main motivation for the proposed construction of refined analytic torsion $-$ a gluing formula. A gluing formula allows to compute the torsion invariant by cutting the manifold into elementary pieces and performing computations on each component. Certainly, the general fact of existence of such gluing formulas is remarkable from the analytic point of view, since the secondary spectral invariants are uppermost non-local.
\\[3mm] We establish a gluing formula for the refined analytic torsion in three steps. First we establish a splitting formula for the eta-invariant of the even part of the odd-signature operator. This is essentially an application of the results in [KL].
\\[3mm] Secondly we establish a splitting formula for the refined torsion $\rho_{[0,\lambda]}$ in the special case $\lambda =0$. This is the most intricate part and is done by a careful analysis of long exact sequences in cohomology und the Poincare duality on manifolds with boundary. The discussion is subdivided into several sections.
\\[3mm] Finally we are in the position to establish the desired gluing formula for the refined analytic torsion, as a consequence of the Cheeger-M\"{u}ller Theorem and a gluing formula for the combinatorial torsion by M. Lesch [L2]. As a byproduct we also obtain a splitting formula for the scalar analytic torsion in terms of combinatorial torsion of a long exact sequence on cohomology, refining the result of Y. Lee in [Lee, Theorem 1.7 (2)].
\\[3mm] In our discussion we do not rely on the gluing formula of S. Vishik in [V], where only the case of trivial representations is treated. In particular we use a different isomorphism between the determinant lines, which is more convenient in the present setup.
\\[3mm] We perform the proof under the assumption of a flat Hermitian metric, in other words in case of unitary representations. This is done partly because the Cheeger-M\"{u}ller Theorem for manifolds with boundary and unimodular representations is not explicitly established for the time being. It seems, however, that the appropriate result can be established by an adaptation of arguments in [L\"{u}] and [Mu]. 
\\[3mm] Finally it should be emphasized that the presented result can also be viewed as a gluing formula for the refined analytic torsion in the sense of Braverman and Kappeler.
\\[3mm] {\bf Acknowledgements.} The results of this article were obtained during the author's Ph.D. studies at Bonn University, Germany. The author would like to thank his thesis advisor Prof. Matthias Lesch for his support and useful suggestions. The author is also grateful to Prof. Werner M\"{u}ller for helpful discussions. The author was supported by the German Research Foundation as a scholar of the Graduiertenkolleg 1269 "Global Structures in Geometry and Analysis".

\section{Setup for the Gluing Formula}\label{gluing-statement} \
\\[-3mm] Let $M=M_1\cup_{N}M_2$ be an odd-dimensional oriented closed Riemannian manifold where $N$ is an embedded closed hypersurface of codimension one which separates $M$ into two pieces $M_1$ and $M_2$ such that $M_j, j=1,2$ are compact bounded Riemannian manifolds with $\partial M_j=N$ and orientations induced from $M$. The setup is visualized in the Figure \ref{split-manifold-picture} below:
\begin{figure}[h]
\begin{center}
\includegraphics[width=0.5\textwidth]{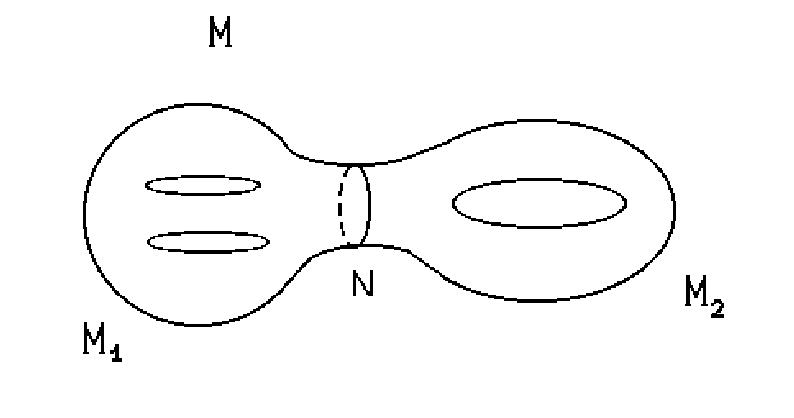}
\caption{A compact closed split-manifold $M=M_1\cup_{N}M_2$.}
\label{split-manifold-picture}
\end{center}
\end{figure}
\\[-2mm] Let $\rho:\pi_1(M)\to U(n,\C)$ be a unitary representation of the fundamental group of $M$. Denote by $\widetilde{M}$ the universal cover of $M$. It is a principal bundle over $M$ with the structure group $\pi_1(M)$, cf. [KN, Proposition 5.9 (2)]. Consider the complex vector bundle $E$ associated to the principal bundle $\widetilde{M}$ via the representation $\rho$. $$E=\widetilde{M}\times_{\rho}\C^n.$$ 
The vector bundle is naturally endowed with a canonical flat connection $\D$, induced by the exterior derivative on $\widetilde{M}$. The holonomy representation of $\D$ is given by the representation $\rho$. 
\\[3mm] Note that all flat vector bundles arise this way. In fact flatness of a given connection on a vector bundle implies that the associated holonomy map gives rise to a well-defined representation of the fundamental group of the base manifold, and the representation is related to the vector bundle as above.
\\[3mm] By unitariness of the representation $\rho$ the standard Hermitian inner product on $\C^n$ gives rise to a smooth Hermitian metric $h^E$ on $E$, compatible with the flat connection $\D$. In other words the canonically induced Hermitian metric $h^E$ is flat.
\\[3mm] Assume the metric structures $(g^M, h^E)$ to be product near the hypersurface $N$. The issues related to the product structures are discussed in detail in [BLZ, Section 2]. More precisely, we identify using the inward geodesic flow an open collar neighborhood $U\subset M$ of the hypersurface $N$ diffeomorphically with $(-\epsilon,\epsilon)\times N, \epsilon > 0$, where the hypersurface $N$ is identified with $\{0\}\times N$. The metric $g^M$ is product over the collar neighborhood of $N$, if over $U$ it is given under the diffeomorphism $\phi: U \to (-\epsilon,\epsilon)\times \partial M$ by 
\begin{align}\label{Riemann-product}
\phi_*g^M|_U=dx^2\oplus g^M|_{N}.
\end{align}
The diffeomorphism $U \cong (-\epsilon,\epsilon)\times N$ shall be covered by a bundle isomorphism $\widetilde{\phi}: E|_U \to (-\epsilon,\epsilon)\times E|_N$. The fiber metric $h^E$ is product near the boundary, if it is preserved by the bundle isomorphism, i.e. if for all $x\in (-\epsilon, \epsilon)$
\begin{align}\label{Hermitian-product}
\widetilde{\phi}_*h^E|_{\{x\}\times N}=h^E|_{N}.
\end{align}
The restrictive assumption of product metric structures is necessary to apply the splitting formula of [KL] to our setup, which works only on Dirac type operators in product form over the collar with constant tangential part.
\\[3mm] Furthermore we use the product metric structures in order to apply the Cheeger-M\"{u}ller Theorem for manifolds with boundary, ( cf. [L\"{u}], [V]). However with the anomaly formulas in [BZ1] and [BM] the product structures are not essential here. 
\\[3mm] By Leibniz rule the connection $\D$ gives rise to flat twisted exterior differential on smooth $E$-valued differential forms. The restrictions of $(E,\D)$ to $M_j,j=1,2$ give rise to twisted de Rham complexes $(\Omega_0^*(M_j,E),\D_j)$. We denote their minimal and maximal extensions by $$(\mathcal{D}_{j, \min /\max}, \D_{j, \min /\max}), $$ respectively. By [BV3, Theorem 3.2], which is in the untwisted setup essentially the statement of [BL1, Theorem 4.1], these complexes are Fredholm and their cohomologies can be computed from smooth subcomplexes as follows. Consider for $j=1,2$ the natural inclusions $\iota_j:N\hookrightarrow M_j$ and put
\begin{align*}
&\Omega^*_{\textup{min}}(M_j,E):=\{\w \in \Omega^*(M,E)| \iota^*_j\w=0\},\\
&\Omega^*_{\textup{max}}(M_j,E):= \Omega^*(M,E).
\end{align*}
The operators $\D_j$ yield exterior derivatives on $\Omega^*_{\textup{min}}(M_j,E)$ and $\Omega^*_{\textup{max}}(M_j,E)$. The complexes $(\Omega^*_{\textup{min / max}}(M_j,E),\D_j)$ are by [BV3, Theorem 3.2] smooth subcomplexes of the Fredholm complexes $(\mathcal{D}_{j, \min /\max}, \D_{j, \min /\max})$ with
\begin{align*}
H^*_{\textup{rel / abs}}(M_j,E)&:=H^*(\Omega^*_{\min / \max }(M_j,E),\D_j)\\ &\cong H^*(\mathcal{D}_{j, \min /\max}, \D_{j, \min /\max}), \quad j=1,2.
\end{align*}
Finally we define for $j=1,2$
\begin{align*}
(\domm_j,\DD_j)&:=(\domrj, \D_{j, \min}) \oplus (\domaj, \D_{j, \max}), \\
(\domm,\DD)&:=(\dom, \D) \oplus (\dom, \D),
\end{align*}
where $(\dom, \D)$ denotes the unique ideal boundary conditions of $(\Omega^*(M, E),\D)$. 
\\[3mm] Furthermore, the Riemannian metric $g^M$ and the fixed orientation on $M$ give rise to the Hodge-star operator for any $k=0,..,m=\dim M$: $$*:\Omega^k(M,E)\to \Omega^{m-k}(M,E).$$ Define $$\G :=i^r(-1)^{\frac{k(k+1)}{2}}*:\Omega^k(M,E)\to \Omega^{m-k}(M,E), \quad r:= (\dim M+1)/2.$$ This operator extends to a well-defined self-adjoint involution on the $L^2-$completion of $\Omega^*(M,E)$, which we also denote by $\G$. We indicate the restriction of $\G$ to $M_j,j=1,2$ by the additional subscript $j$ and define, corresponding to [BV3, Definition 3.5], the odd-signature operators of the complexes $(\domm, \DD)$ and $(\domm_j,\DD_j), j=1,2$: 
\begin{align*}
\B:&=\GG\DD+\DD\GG, \\
\B^j&:=\GG_j\DD_j+\DD_j\GG_j, j=1,2.
\end{align*}
The upper index $j$ will not pose any confusion with the square of the odd signature operator, since the squared odd-signature operator does not appear in the arguments below. The presented notation remains fixed throughout the discussion below, unless stated otherwise. However for convenience, some setup and notation will be repeated for clarification.

\section{Temporal Gauge Transformation}\label{appendix-gauge}
Consider the closed oriented Riemannian split-manifold $(M,g^M)$ and the flat Hermitian vector bundle $(E,\D,h^E)$ with the structure group $G:=U(n,\C)$ as introduced in Section \ref{gluing-statement}. Denote the principal $G$-bundle associated to $E$ by $P$. $G$ acts on $P$ from the right.
\\[3mm] Consider $U\cong (-\epsilon,\epsilon)\times N$ the collar neighborhood of the splitting hypersurface $N$. We view the restrictions $P|_U,P|_{N}$ as $G$-bundles, where the structure group can possibly be reduced to a subgroup of $G$. 
\\[3mm] By the setup of Section \ref{gluing-statement} the bundle structures are product over $U$. More precisely let $\pi: (-\epsilon, \epsilon)\times \partial X \to \partial X$ be the natural projection onto the second component. We have a bundle isomorphisms $E|_U\cong \pi^*E|_{N}$ and for the associated principal bundles 
\begin{align*}
P|_U\cong \pi^*P|_{N}\xrightarrow{f} P|_{N},
\end{align*}
where $f$ is the principal bundle homomorphism, covering $\pi$, with the associated homomorphism of the structure groups $G \to G$ being the identity automorphism. 
\\[3mm] Now let $\w_{N}$ denote a flat connection one-form on $P|_{N}$. Then $$\w_U:=f^*\w_{N}$$ gives a connection one-form on $P|_U$ which is flat again. 
\\[3mm] In order to understand the structure of $\w_U=f^*\w_{N}$, let $\{\widetilde{U}_{\A},\widetilde{\Phi}_{\A}\}$ be a set of local trivializations for $P|_{N}$. Then $P|_U\cong \pi^*P|_{N}$ is equipped with a set of naturally induced local trivializations $\{U_{\A}:=(-\epsilon, \epsilon)\times \widetilde{U}_{\A}, \Phi_{\A}\}$. The local trivializations define local sections $\widetilde{s}_{\A}$ and $s_{\A}$ as follows. For any $p \in \widetilde{U}_{\A}$, normal variable $x \in (-\epsilon, \epsilon)$ and for $e\in G$ being the identity matrix we put
\begin{align*}
\widetilde{s}_{\A}(p):=\widetilde{\Phi}_{\A}^{-1}(p,e), \\
s_{\A}(x,p):=\Phi_{\A}^{-1}((x,p),e).
\end{align*}
We use the local sections to obtain local representations for the connection one-forms $\w_U$ and $\w_{N}$:
\begin{align*}
\widetilde{\w}^{\A}:=\widetilde{s}^*_{\A}\w_{N}\in \Omega^1(\widetilde{U}_{\A},\mathcal{G}), \\
\w^{\A}:=s^*_{\A}\w_U\in \Omega^1(U_{\A},\mathcal{G}),
\end{align*}
where $\mathcal{G}$ denotes the Lie Algebra of $G$. By construction both local representations are related as follows. Let $(x,\underline{y})=(x,y_1,..,y_n)$ denote the local coordinates on $U_{\A}=(-\epsilon, \epsilon)\times \widetilde{U}_{\A}$ with $x \in (-\epsilon, \epsilon)$ being the normal coordinate and $\underline{y}$ denoting the local coordinates on $\widetilde{U}_{\A}$. Then 
\begin{align}
\widetilde{\w}^{\A}&=\sum\limits_{i=1}^n \w^{\A}_i(\underline{y})dy_i, \nonumber \\
\w^{\A}&=\w^{\A}_0(x,\underline{y})dx+\sum\limits_{i=1}^n \w^{\A}_i(x,\underline{y})dy_i, \nonumber \\
\textup{with} \quad \w^{\A}_0&\equiv 0, \ \textup{and} \ \w^{\A}_i(x,\underline{y})\equiv \w^{\A}_i(\underline{y}). \label{temp-stern}
\end{align}
We call a flat connection $\w$ on $P$ a connection in temporal gauge, if there exists a flat connection $\w_N$ on $P_N$ such that over the collar neighborhood $U$ $$\w|_U=f^*\w_{N}.$$ The local properties of a connection in temporal gauge and in particular its independence of the normal variable $x\in (-\epsilon, \epsilon)$ are discussed in \eqref{temp-stern}. Our aim is to show that any flat connection one-form on a principal bundle $P$ can be gauge transformed to a flat connection in temporal gauge. Recall that a gauge-transformation of $P$ is a principal bundle automorphism $g \in \textup{Aut}(P)$ covering identity on $M$ with $g(p\cdot u)=g(p)\cdot u$ for any $u\in G$ and $p\in P$.
\\[3mm] A gauge transformation can be viewed interchangedly as a transformation from one system of local trivializations into another. Hence the action of a gauge transformation on a connection one-form is determined by the transformation law for connections under change of coordinates.
\\[3mm] We have the following result.
\begin{prop}\label{t-g}
Any flat connection on the principal bundle $P$ is gauge equivalent to a flat connection in temporal gauge.
\end{prop}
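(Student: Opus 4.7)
The plan is to build the gauge transformation first on the collar $U$ and then globalize it. On the collar, the construction is a parallel-transport-type ODE in the normal direction designed to kill the $dx$-component of the connection; flatness then takes care of the tangential components automatically.

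Concretely, in the local trivialization $\Phi_{\A}$ of $P|_U$ induced from $\widetilde{\Phi}_{\A}$ on $P|_N$, the connection one-form reads $\w^{\A} = \w^{\A}_0(x,\underline{y})\,dx + \sum_{i=1}^n \w^{\A}_i(x,\underline{y})\,dy_i$. I would define a local $G$-valued function $g_{\A}: U_{\A}\to G$ as the unique solution of the linear ODE
\begin{align*}
\partial_x g_{\A}(x,\underline{y}) = -\w^{\A}_0(x,\underline{y})\, g_{\A}(x,\underline{y}), \qquad g_{\A}(0,\underline{y}) = e,
\end{align*}
depending smoothly on $\underline{y}\in \widetilde{U}_{\A}$. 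Since the transition functions of $P|_U$ inherit the $x$-independence of those on $P|_N$, a direct calculation using the transformation law for connection one-forms verifies that the family $\{g_{\A}\}$ satisfies the expected compatibility under change of trivialization and hence assembles into a bona fide gauge transformation $g$ of $P|_U$.

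By construction, the $dx$-component of the transformed connection $g^{*}\w$ vanishes identically on $U$. Now I invoke flatness: writing $\w' := g^{*}\w$, the vanishing of the $dx\wedge dy_i$-component of the curvature two-form $d\w' + \w'\wedge \w'$ reduces, in view of $\w'_0 \equiv 0$, to $\partial_x \w'_i = 0$. Hence $\w'|_U$ is independent of the normal variable and is of the form $f^{*}\w_N$, where $\w_N$ denotes the restriction of $\w'$ to $\{0\}\times N$; the latter is flat as the restriction of a flat connection.

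The main obstacle I anticipate is promoting $g$ to a global gauge transformation $\widehat{g}\in\textup{Aut}(P)$, since $g$ generically does not stay close to the identity as $x$ approaches the outer boundary of $U$, which blocks a naive cutoff. My resolution is to pass to a sub-collar $U' = (-\epsilon',\epsilon')\times N \subset U$; by continuity and the initial condition $g(0,\underline{y}) = e$, together with compactness of $N$, one can choose $\epsilon' > 0$ so small that $g$ takes values in a tubular neighborhood of the identity section of the Lie group bundle $\textup{Aut}(P)$. On this sub-collar one applies the Lie group logarithm fiberwise, multiplies by a bump function $\chi(x)$ equal to $1$ near $x=0$ and supported in $(-\epsilon',\epsilon')$, and exponentiates to produce the desired $\widehat{g}$, which agrees with $g$ on a still smaller collar and equals the identity outside $U'$. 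Since the product structures persist on any subcollar, this suffices to put $\w$ into temporal gauge in the sense of the above definition.
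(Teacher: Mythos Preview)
Your proof follows the same core strategy as the paper: solve the parallel-transport ODE $\partial_x g_{\A} = -\w^{\A}_0\, g_{\A}$, $g_{\A}(0,\underline{y})=e$ to annihilate the $dx$-component, then invoke flatness to force $x$-independence of the tangential components. The paper dispatches all globalization with a one-line ``partition of unity argument'', which is somewhat opaque for gauge transformations since these form a group rather than a vector space; you are more explicit, checking that the $x$-independence of the transition functions of $\pi^*P|_N$ makes the local solutions $g_{\A}$ patch on the collar, and then extending to all of $P$ by a logarithm--cutoff--exponential construction on a subcollar. Conversely, the paper spends a paragraph justifying that the ODE solution stays in $G$ (via integral curves of right-invariant time-dependent vector fields), which you assert without comment; this is standard but worth a sentence. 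Your final shrinking-of-the-collar step is legitimate, since the width of the collar in the definition of temporal gauge is not fixed a priori.
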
 
\begin{proof}
By a partition of unity argument it suffices to discuss the problem locally over a trivializing neighborhood $(U_{\A}:=(-\epsilon, \epsilon)\times \widetilde{U}_{\A}, \Phi_{\A})$.
\\[3mm] Let $\w$ be a flat connection on $P|_U$. Let $g$ be any gauge transformation on $P|_U$. Denote the gauge transform of $\w$ under $g$ by $\w_g$. 
\\[3mm] Over the trivializing neighborhood $U_{\A}$ the connections $\w,\w_g$ and the gauge tranformation $g$ are given by local $\mathcal{G}$-valued one-forms $\w^{\A}, \w_g^{\A}$ and a $G$-automorphism $g^{\A}$ respectively. They are related in correspondence to the transformation law of connections as follows
$$\w^{\A}_g=(g^{\A})^{-1}\circ \w^{\A}\circ g^{\A}+(g^{\A})^{-1}dg^{\A},$$
where the action $\circ$ is the concatenation of matrices ($G\subset GL(n,\C)$), after evaluation at a local vector field and a base point in $U_{\A}$. The local one form $\w^{\A}$ writes as
$$\w^{\A}=\w^{\A}_0(x,\underline{y})dx+\sum\limits_{i=1}^n w^{\A}_i(x,\underline{y})dy_i.$$
In order to gauge-transform $\w$ into temporal gauge, we need to annihilate $\w^{\A}_0$ and the $x$-dependence in $\w^{\A}_i$. For this reason we consider the following initial value problem with parameter $\underline{y}\in \widetilde{U}_{\A}$
\begin{align}\nonumber \partial_xg^{\A}(x,\underline{y})=-\w^{\A}_0(x,\underline{y})g^{\A}(x,\underline{y}), \\
g^{\A}(0,\underline{y})=\one \in GL(n,\C). \label{ODE}
\end{align}
In order to identify the solution to \eqref{ODE} consider for any fixed $\underline{y}\in \widetilde{U}_{\A}$ the following $x-$time dependent vector field $V^{\A}_{x,\underline{y}}, x\in (-\epsilon, \epsilon)$ on $G$: $$\forall u\in G \quad V^{\A}_{x,\underline{y}}u:=-(R_u)_* \w^{\A}_0(x,\underline{y})= -\w^{\A}_0(x,\underline{y})\cdot u,$$
where $R_u$ is the right multiplication on $G$ and the second equality follows from the fact that $G\in GL(n,\C)$ is a matrix Lie group.
\\[3mm] Let $\widetilde{g}^{\A}(x, \underline{y})$ be the unique integral curve of the time-dependent vector field $V^{\A}_{x,\underline{y}}$ with $\widetilde{g}^{\A}(0, \underline{y})=\one\in G$. It satisfies $$\partial_x \widetilde{g}^{\A}(x, \underline{y})=V^{\A}_{x,\underline{y}}\widetilde{g}^{\A}(x, \underline{y})=-\w^{\A}_0(x,\underline{y})\widetilde{g}^{\A}(x, \underline{y}).$$
Hence the integral curve $\widetilde{g}^{\A}(x, \underline{y})$ solves \eqref{ODE}. By the fundamental theorem for ordinary linear differential equations (cf. [KN, Appendix 1]) we know that the initial value problem \eqref{ODE} has a unique solution, smooth in $x\in (-\epsilon, \epsilon)$ and $\underline{y}\in \widetilde{U}_{\A}$. Since $\widetilde{g}^{\A}(x,\underline{y})$ solves \eqref{ODE} we find that the solution is moreover $G-$valued.
\\[3mm] With gauge transformation $g$ being locally the solution to \eqref{ODE} we find for the gauge transformed connection $\w_g$
\begin{align*}
\w^{\A}_g=(g^{\A})^{-1}\circ \w^{\A}\circ g^{\A}+(g^{\A})^{-1}dg^{\A}=\\
=(g^{\A})^{-1}\circ \w^{\A}_0\circ g^{\A}dx+\sum\limits_{i=1}^n (g^{\A})^{-1}\circ \w^{\A}_i\circ g^{\A}dy_i + \\ + (g^{\A})^{-1}\partial_x g^{\A}dx +\sum\limits_{i=1}^n (g^{\A})^{-1}\partial_{y_i} g^{\A}dy_i= \\
=\sum\limits_{i=1}^n (g^{\A})^{-1}\circ \w^{\A}_i\circ g^{\A}dy_i + \sum\limits_{i=1}^n (g^{\A})^{-1}\partial_{y_i} g^{\A}dy_i.
\end{align*} 
where in the last equality we cancelled two summands due to $g^{\A}$ being the solution to \eqref{ODE}. So far we didn't use the fact that $\w$ is a flat connection. A gauge transformation preserves flatness, so $\w_g$ is flat again. Put
$$\w^{\A}_g=\w^{\A}_{g,0}(x,\underline{y})dx+\sum\limits_{i=1}^n \w^{\A}_{g,i}(x,\underline{y})dy_i,
$$ where by the previous calculation 
$$\w^{\A}_{g,0}\equiv 0, \quad \w^{\A}_{g,i}\equiv (g^{\A})^{-1}\circ \w^{\A}_i\circ g^{\A}+ (g^{\A})^{-1}\partial_{y_i} g^{\A}.$$
Flatness of $\w_g$ implies $$\partial_x\w^{\A}_{g,i}(x,\underline{y})=\partial_{y_i}\w^{\A}_{g,0}(x,\underline{y})=0.$$
Hence the gauge transformed connection is indeed in temporal gauge. This completes the proof.
\end{proof}\ \\
\\[-7mm] A gauge transformation, viewed so far as a principal bundle automorphism on the $G-$principal bundle $P$, can equivalently be viewed as a $G-$valued bundle automorphism on the vector bundle $E$ associated to $P$. We adopt this point of view for the forthcoming discussion.
\\[3mm] Take the given flat connection $\D$ on the Hermitian vector bundle $(E,h^E)$ with the structure group $G=U(n,\C)$ and the canonical metric $h^E$ induced by the standard inner product on $\C^n$. Proposition \ref{t-g} asserts existence of a temporal gauge transformation $g \in Aut_G(E)$ such that the gauge transformed covariant derivative $g\D g^{-1}$ is in temporal gauge (a covariant derivative is said to be in temporal gauge if the associated connection one-form is in temporal gauge). 
\\[3mm] The temporal gauge transformation $g$ gives rise to a map on sections in a natural way $$\mathfrak{G}: \Omega^*(M, E\otimes E)\rightarrow \Omega^*(M, E \otimes E).$$ Due to the fact that $g$ takes locally values in $U(n,\C)$ and the Hermitian metric $h^E$ is canonically induced by the standard inner product on $\C^n$, we obtain the following result:
\begin{prop}
$\mathfrak{G}$ extends to a unitary transformation  $$\mathfrak{G}: L^2_*(M, E\otimes E, g^M, h^E)\rightarrow L^2_*(M, E\otimes E, g^M, h^E).$$
\end{prop}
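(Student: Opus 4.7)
The plan is to verify that $\mathfrak{G}$ is an isometry on the pre-Hilbert space of smooth compactly-supported $E\otimes E$-valued forms and then invoke the bounded linear transformation theorem to obtain the unitary extension to $L^2$. Throughout, the key structural input is that the gauge transformation $g\in \mathrm{Aut}_G(E)$ takes values fiberwise in $U(n,\C)$ together with the fact that $h^E$ is, by construction, the fiber metric induced by the standard Hermitian inner product on $\C^n$.

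First I would unpack the definition of $\mathfrak{G}$. Over a trivializing chart, an element of $\Omega^*(M,E\otimes E)$ is represented by a matrix of differential forms, and $g$ acts fiberwise on $E\otimes E$ via $g\otimes g$ (or, equivalently, by left-right matrix multiplication on the matrix coefficients). Since $g(p)\in U(n,\C)$ at every point $p\in M$, the induced action on $E_p\otimes E_p$ is unitary with respect to the fiberwise Hermitian structure $h^E_p \otimes h^E_p$. This is a pointwise statement about unitary matrices: if $g(p)\in U(n,\C)$, then $g(p)\otimes g(p)$ preserves the standard inner product on $\C^n\otimes \C^n$, and the identification of $h^E$ with the standard inner product transports this unitarity to $E_p\otimes E_p$.

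Next I would combine this pointwise fiberwise unitarity with the Riemannian metric $g^M$ to get unitarity of $\mathfrak{G}$ on $\Omega^*_c(M,E\otimes E)$. The $L^2$-inner product on forms is obtained by contracting the values of two forms with $h^E\otimes h^E$ and the inner product on $\Lambda^* T^*M$ induced by $g^M$, then integrating against the Riemannian volume form of $g^M$. Since $\mathfrak{G}$ acts as the identity on the form-part and as the fiberwise unitary $g\otimes g$ on the coefficient-part, and since neither $g^M$ nor the volume form is touched, the integrand $\langle \mathfrak{G}\omega_1,\mathfrak{G}\omega_2\rangle$ equals $\langle \omega_1,\omega_2\rangle$ pointwise, hence the $L^2$-inner product is preserved.

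Finally, since $\Omega^*_c(M,E\otimes E)$ is dense in $L^2_*(M,E\otimes E, g^M, h^E)$, the isometry $\mathfrak{G}$ extends uniquely to a bounded operator on the completion, still denoted $\mathfrak{G}$; the same argument applied to the inverse gauge transformation $g^{-1}$, which also takes values in $U(n,\C)$, gives a two-sided inverse which is also an isometry. Hence the extension is surjective and unitary. The only step that requires any genuine care is the verification that the pointwise action of $g\otimes g$ is unitary on $E\otimes E$, but this is immediate from $U(n,\C)\otimes U(n,\C)\subset U(n^2,\C)$ together with the compatibility of $h^E$ with the standard inner product; no obstacle is expected.
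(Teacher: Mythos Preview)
Your proposal is correct and follows essentially the same reasoning as the paper. The paper does not give a formal proof of this proposition; it simply prefaces the statement with the observation that $g$ takes values locally in $U(n,\C)$ and that $h^E$ is canonically induced by the standard inner product on $\C^n$, and leaves the rest to the reader. Your argument spells out exactly what is implicit there: fiberwise unitarity of $g\otimes g$, pointwise preservation of the integrand, density of smooth forms, and invertibility via $g^{-1}$.
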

\begin{cor}\label{frak-G}
The odd-signature operators $\B=\B(\D)$ and $\B^j=\B^j(\D_j), j=1,2$ are spectrally equivalent to $\B(g\D g^{-1})$ and $\B^j(g\D g^{-1}|_{M_j}), j=1,2$ respectively.
\end{cor}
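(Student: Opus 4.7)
The plan is to establish that $\mathfrak{G}$ furnishes a unitary intertwining between the two odd-signature operators in question; spectral equivalence then follows at once from unitary equivalence, since $\mathfrak{G}$ is unitary by the preceding proposition. Concretely, I would prove
\[
\mathfrak{G}\,\B\,\mathfrak{G}^{-1}=\B(g\D g^{-1}),\qquad \mathfrak{G}\,\B^j\,\mathfrak{G}^{-1}=\B^j(g\D g^{-1}|_{M_j}),
\]
on the appropriate Fredholm domains.

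First, on smooth $E\otimes E$-valued forms I would verify the purely formal identity $\mathfrak{G}\,\DD\,\mathfrak{G}^{-1}=\DD(g\D g^{-1})$. Two gauge-equivalent connections are by construction conjugate under the corresponding bundle automorphism, and unwinding the Leibniz rule for the twisted exterior derivative shows that conjugating $\DD$ (built from $\D$ on each factor) by $\mathfrak{G}$ (built from $g$ on each factor) reproduces precisely the exterior derivative associated with $g\D g^{-1}$. Next, I would check that $\mathfrak{G}$ commutes with the chirality operator $\GG$. The Hodge-star underlying $\G$ is assembled from the Riemannian metric $g^M$ on the form factor and from the Hermitian metric $h^E$ on the bundle factor, together with purely numerical scalars depending only on the form degree. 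Because $g$ takes values in $U(n,\C)$, its fiberwise action preserves $h^E$, so $\mathfrak{G}$ commutes with the Hodge-star and with the scalar factors entering the definition of $\G$ and $\GG$. Combining the two identities yields $\mathfrak{G}\,\B\,\mathfrak{G}^{-1}=\B(g\D g^{-1})$ on smooth forms, and similarly on each piece $M_j$.

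It remains to verify that $\mathfrak{G}$ descends to a unitary map on the closed extensions, so that the intertwining extends from smooth forms to the full Fredholm domains $\domm$ and $\domm_j$. Since $g$ is a smooth fiberwise $U(n,\C)$-automorphism, it preserves the boundary condition $\iota_j^*\w=0$ defining $\Omega^*_{\min}(M_j,E)$ and trivially preserves $\Omega^*_{\max}(M_j,E)=\Omega^*(M_j,E)$. Furthermore, $g\D g^{-1}$ differs from $\D$ only by the bounded zeroth-order term $g^{-1}dg\in\Omega^1(M,\textup{End}(E))$, so $\D$ and $g\D g^{-1}$ have identical minimal and maximal closed extensions with equivalent graph norms. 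Consequently $\mathfrak{G}$ extends to a unitary isomorphism $\domm_j\to\domm_j$ that intertwines $\B^j(\D_j)$ and $\B^j(g\D g^{-1}|_{M_j})$, and analogously on $M$.

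The only point requiring genuine care is the last one — the preservation of the minimal and maximal closed extensions under $\mathfrak{G}$ — but since $\mathfrak{G}$ is bounded and invertible on $L^2$ and the difference $g\D g^{-1}-\D$ is a bounded zeroth-order operator, the domain identification is routine. Everything else is essentially formal manipulation with the definitions already introduced in Section \ref{gluing-statement} and in Proposition \ref{t-g}.
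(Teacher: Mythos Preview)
Your proposal is correct and follows essentially the same approach as the paper. The paper's own justification is a single sentence citing ``invariance of minimal and maximal extensions under unitary transformations and the fact that unitary transformations preserve spectral properties,'' together with a reference to [BV1, Proposition 4.2]; you have simply unpacked this into the explicit intertwining identities $\mathfrak{G}\,\DD\,\mathfrak{G}^{-1}=\DD(g\D g^{-1})$ and $\mathfrak{G}\,\GG=\GG\,\mathfrak{G}$, and spelled out why the conjugation respects the closed extensions.
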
\ \\
\\[-7mm] The statement of the corollary above follows from invariance of minimal and maximal extensions under unitary transformations and from the fact that unitary transformations preserve spectral properties of operators, compare also [BV1, Proposition 4.2]
\\[3mm] The statement of the corollary implies that in the setup of this section (for unitary vector bundles) the assumption of temporal gauge is done without loss of generality, which we do henceforth. In this particular geometric setup we obtain the following specific result for refined analytic torsion.
\begin{prop}\label{rho-formula} Let $T^{RS}(\DD)$ and $T^{RS}(\DD_j), j=1,2$ denote the scalar analytic torsions associated to the complexes $(\domm, \DD), (\domm_j, \DD_j)$, respectively. Furthermore let $\rho_{\G}(M,E)$ and $\rho_{\G}(M_j,E)$ denote the associated refined torsion elements in the sense of [BV3, (3.7)] for $\lambda=0$. Then we have
\begin{align*}
\rho_{\textup{an}}(\D)=\frac{1}{T^{RS}(\DD)}\cdot \exp \left[-i\pi \eta(\B_{\textup{even}})+i\pi \textup{rk}(E) \eta (\B_{\textup{trivial}})\right] \times  \hspace{30mm}\\
\times \exp \left[-i\pi \frac{m-1}{2}\dim \ker \B_{\textup{even}}+i\pi \textup{rk}(E) \frac{m}{2}\dim \ker \B_{\textup{trivial}}\right] \rho_{\G}(M,E),\\
\rho_{\textup{an}}(\D_j)=\frac{1}{T^{RS}(\DD_j)}\cdot \exp \left[-i\pi \eta(\B^j_{\textup{even}})+i\pi \textup{rk}(E) \eta (\B^j_{\textup{trivial}})\right] \times \hspace{30mm}\\
\times \exp \left[ -i\pi \frac{m-1}{2}\dim \ker \B^j_{\textup{even}}+i\pi \textup{rk}(E) \frac{m}{2}\dim \ker \B^j_{\textup{trivial}}\right] \rho_{\G}(M_j,E).
\end{align*} 
\end{prop}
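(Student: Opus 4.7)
The strategy is to unpack the definition of refined analytic torsion from [BV3] and recognize the right-hand side as the definition itself, written in terms of the scalar analytic torsion and eta invariants. Recall that in [BV3] the refined analytic torsion is built in two stages: first one forms the refined torsion element $\rho_{\G}$ from the chirality operator acting on the (finite-dimensional) zero-eigenvalue subspace of $\B$, and then one multiplies by the graded determinant of the odd-signature operator restricted to its positive-spectrum part, together with a metric anomaly correction coming from the eta invariant of the trivial connection. Since we restrict to $\lambda=0$, there is no intermediate finite-dimensional complex to worry about beyond $\ker\B$.

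First I would reduce the graded determinant of $\B$ on the positive part of the spectrum to a combination of scalar analytic torsion and eta invariants. This is the standard Braverman--Kappeler decomposition: for a self-adjoint operator of the form $\B=\GG\DD+\DD\GG$, the logarithm of the graded determinant splits into a real part, which is (up to signs and the factor coming from the grading) the logarithm of $T^{RS}(\DD)$, and an imaginary part given by $-i\pi\,\eta(\B_{\textup{even}})$. The statement is already essentially made in [BV3] in the boundary setting, so what is needed here is only to read off the corresponding formula for the complex $(\domm,\DD)$ on the closed manifold $M$ and for each of the two boundary complexes $(\domm_j,\DD_j)$.

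Next I would incorporate the metric anomaly term $+i\pi\,\textup{rk}(E)\,\eta(\B_{\textup{trivial}})$, which by the very definition of refined analytic torsion in [BV3] is inserted so that $\rho_{\textup{an}}$ becomes independent of the Riemannian metric in the interior (up to the boundary contribution already controlled by the product-structure assumption from Section \ref{gluing-statement}). The two kernel correction terms $-i\pi\tfrac{m-1}{2}\dim\ker\B_{\textup{even}}$ and $+i\pi\,\textup{rk}(E)\tfrac{m}{2}\dim\ker\B_{\textup{trivial}}$ come from the contribution of the zero modes to the graded determinant and from the analogous contribution for the trivial connection; these are normalized so that the exponential factor matches the normalization used to define $\rho_{\G}$ in [BV3, (3.7)] at $\lambda=0$. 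Assembling the three pieces yields the displayed equality for $\rho_{\textup{an}}(\D)$.

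Finally, the formula for $\rho_{\textup{an}}(\D_j)$ is obtained by the same computation applied to the Fredholm complexes $(\domm_j,\DD_j)$, using the fact that the construction of refined analytic torsion in [BV3] is carried out simultaneously for closed manifolds and for manifolds with boundary under the minimal/maximal ideal boundary conditions encoded in $\domm_j$. The main obstacle in the proof is purely bookkeeping: one must carefully match the sign conventions for $\G$, the shift $r=(\dim M+1)/2$, and the choice of positive vs.\ negative spectral projectors used in defining both the graded determinant and $\rho_{\G}$, so that the two kernel-dimensional corrections come out with exactly the coefficients stated. No new analytic input is required beyond what is already contained in the definition in [BV3] and Corollary \ref{frak-G}, which guarantees that we may assume the connection to be in temporal gauge without affecting any of the spectral quantities on either side.
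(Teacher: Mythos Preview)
Your approach is the same as the paper's---unpack the definition of $\rho_{\textup{an}}$ from [BV3, Corollary 4.9] at $\lambda=0$ and identify each factor---but you gloss over two concrete ingredients that the paper names and that are not purely bookkeeping.

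First, the identification of $e^{\xi_0(\D)}$ with $1/T^{RS}(\DD)$ is not automatic: it requires that the squared odd-signature operator coincide with the Laplacian of the complex, and this in turn uses that the Hermitian metric $h^E$ is \emph{flat} (i.e.\ that the representation is unitary). You never invoke this hypothesis, yet without it the real part of the graded log-determinant is not the scalar Ray--Singer torsion.

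Second, the precise kernel coefficients $\tfrac{m-1}{2}$ and $\tfrac{m}{2}$ do not drop out of the definition directly. The paper obtains them by first computing $\xi'_\lambda(\D)=\tfrac{1}{2}\sum_k(-1)^{k+1}k\dim\domm^k_{[0,\lambda]}$---which on the boundary pieces $M_j$ requires the product structure and temporal gauge so that one can reduce to a closed double manifold as in [BV3, Theorem 5.3]---then reducing $\xi'_\lambda$ mod $2\Z$ via [BV3, Lemma 4.8], and finally combining with [BV3, (4.9)] to pass from $\eta(\B^{(0,\infty)}_{\textup{even}})+\xi'_0$ to $\eta(\B_{\textup{even}})+\tfrac{m-1}{2}\dim\ker\B_{\textup{even}}$. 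Your sentence ``these are normalized so that the exponential factor matches'' hides exactly this chain of reductions; without the double-manifold argument the formula for $\xi'_\lambda(\D_j)$ on a manifold with boundary is not available, so this is where the geometric hypotheses of Section~\ref{gluing-statement} (and not merely Corollary~\ref{frak-G}) actually enter.
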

\begin{proof}
Recall from the definition of refined analytic torsion in [BV3, Corollary 4.9]
\begin{align*}
\rho_{\textup{an}}(\D)&=e^{\xi_{\lambda}(\D)}\exp [-i\pi (\eta(\B_{\textup{even}}^{(\lambda, \infty)})+\xi'_{\lambda}(\D))] \times \\
&\exp [+i\pi \textup{rk}(E)(\eta(\B_{\textup{trivial}})+\xi'(\D_{\textup{trivial}}))]\cdot \rho_{[0,\lambda]}, \\
\rho_{\textup{an}}(\D_j)&=e^{\xi_{\lambda}(\D_j)}\exp [-i\pi (\eta(\B_{\textup{even}}^{j, (\lambda, \infty)})+\xi'_{\lambda}(\D_j))] \times \\
&\exp [+i\pi \textup{rk}(E)(\eta(\B^j_{\textup{trivial}})+\xi'(\D_{j, \textup{trivial}}))]\cdot \rho^j_{[0,\lambda]}, j=1,2.
\end{align*}
The assumption of product metric structures and the temporal gauge allow a reduction to closed double manifolds, as performed explicitly in [BV3, Theorem 5.3]. This yields by similar arguments, as in [BK2, Proposition 6.5]:
\begin{align*}
\xi'_{\lambda}(\D)&=\frac{1}{2}\sum\limits_{k=0}^m(-1)^{k+1}\cdot k \cdot \dim \domm^k_{[0,\lambda]}, \\
\xi'_{\lambda}(\D_j)&=\frac{1}{2}\sum\limits_{k=0}^m(-1)^{k+1}\cdot k \cdot \dim \domm^k_{j, [0,\lambda]}.
\end{align*}
Now, via [BV3, Lemma 4.8] we obtain
\begin{align*}
\xi'_{\lambda}(\D)&\equiv \frac{m}{2} \dim \domm^{\textup{even}}_{[0,\lambda]} \ \textup{mod} \ 2\Z, \\
\xi'_{\lambda}(\D_j)&\equiv \frac{m}{2} \dim \domm^{\textup{even}}_{j, [0,\lambda]} \ \textup{mod} \ 2\Z.
\end{align*}
Similar arguments show
\begin{align*}
\xi'(\D_{\textup{trivial}})\equiv \frac{m}{2}\dim \ker \B_{\textup{trivial}} \ \textup{mod} \ 2\Z, \\
\xi'(\D_{j, \textup{trivial}})\equiv \frac{m}{2}\dim \ker \B^j_{\textup{trivial}} \ \textup{mod} \ 2\Z.
\end{align*}
Fix $\lambda =0$ and observe for $j=1,2$ from [BV3, (4.9)]:
\begin{align*}
\eta (\B^{(0, \infty)}_{\textup{even}})+\xi'_0(\D)\equiv \eta (\B_{\textup{even}})+\frac{m-1}{2}\dim \ker \B_{\textup{even}} \ \textup{mod} \ 2\Z, \\
\eta (\B^{j, (0, \infty)}_{\textup{even}})+\xi'_0(\D_j)\equiv \eta (\B^j_{\textup{even}})+\frac{m-1}{2}\dim \ker \B^j_{\textup{even}} \ \textup{mod} \ 2\Z.
\end{align*}
Now the statement of the proposition follows from the fact that flatness of the Hermitian metric $h^E$ implies equality between the squared odd-signature operator and the Laplacians of the corresponding complexes, and hence 
\begin{align*}
e^{\xi_0(\D)}=\frac{1}{T^{RS}(\DD)}, \quad e^{\xi_0(\D_j)}=\frac{1}{T^{RS}(\DD_j)}.
\end{align*}
This proves the proposition. 
\end{proof}

\section{Splitting formula for the eta-invariant} \label{eta} \
\\[-4mm] This section is an application of [KL, Theorem 7.7]. For the setup of that result consider $\mathcal{U}\subset M$ the collar neighborhood of the hypersurface $N$ together with a mapping $$\Phi : C^{\infty}(\mathcal{U}, F|_{\mathcal{U}}) \to C^{\infty}((-\epsilon, \epsilon), C^{\infty}(N,F|_N)),$$
where $F$ is any Hermitian vector bundle over $M$ and $\Phi$ extends to an isometry on the $L^2-$completions of the spaces. Now let $D$ be a self-adjoint operator of Dirac-type with the following product form over the collar neighborhood: $$\Phi \circ D|_{\mathcal{U}}\circ \Phi^{-1}=\gamma \left[\frac{d}{dx}+A\right],$$
where $\gamma$ is a bundle homomorphism on $C^{\infty}(N,F|_N)$ and the tangential operator $A$ is a self-adjoint operator of Dirac-type over $C^{\infty}(N,F|_N)$. The essence of the product form lies in the $x$-independence of the coefficients $\gamma$ and $A$. Moreover we assume 
\begin{align}\label{comm-gamma-A}
\gamma^2=-I, \quad \gamma^*=-\gamma, \quad \gamma A=-A\gamma.
\end{align}
By restriction to $M_1,M_2$ we obtain Dirac operators $D^1,D^2$ with product structure (under the identification of $\Phi$) as above over the collar neighborhoods $\mathcal{U}\cap M_j$ of the boundaries $\partial M_j=N, j=1,2$.  
\\[3mm] Let $P:L^2(N,F|_N)\to L^2(N,F|_N)$ satisfy the following conditions:
\begin{align}
&\bullet \ \textup{P is pseudo-differential of order zero}, \label{boundary1} \\
&\bullet \ \textup{P is an orthogonal projection, i.e. $P^*=P, P^2=P$}, \label{boundary2}\\
&\bullet \ \textup{$\gamma P\gamma^*=I-P$}, \label{boundary3}\\
&\bullet \ \textup{$(P_{>0}, P)$ form a Fredholm pair, i.e. $P_{>0}|_{\textup{im}P}$ is Fredholm}. \label{boundary4}
\end{align}
Here $P_{>0}$ denotes the positive spectral projection associated to the self-adjoint tangential operator $A$. The boundary value problems $(D^j,P),j=1,2$ are well-posed in the sense of R. T. Seeley, by [BL3, Theorem 7.2]. We know, see [KL, Theorem 3.1] and the references therein, that the eta-functions $\eta(D^j_P,s)$ extend meromorphically to $\C$. Assume for simplicity that the eta-functions are regular at $s=0$ and set for $j=1,2$:
\begin{align*}
\eta(D^j_P)&:=\frac{1}{2}\left[\eta(D^j_P,s=0) + \dim \ker D^j_P\right], \\
\eta(D)&:=\frac{1}{2}\left[\eta(D,s=0) + \dim \ker D\right].
\end{align*}
This definition coincides with [BV3, (4.1)] for $D^j_P=\B^j_{\textup{even}}$ and $D=\B_{\textup{even}}$, since in the setup of the present section the odd-signature operators are self-adjoint, hence have real spectrum.
\\[3mm] The same holds for $D^j_{I-P}$ as well, and the splitting formula in the version of [KL, Theorem 7.7] is given as follows:
\begin{align}\label{splitting-eta-general}
\eta(D)=\eta(D^1_P)+\eta(D^2_{I-P})-\tau_{\mu}(I-P_1,P,P_1),
\end{align}
where $P_1$ denotes the Calderon projector for $D^1$, which is the orthogonal projection of sections in $F|_N$ onto the Cauchy-data space of $D^1$ consisting of the traces at $N$ of elements in the kernel of $D^1$. For further details see [BW].
\\[3mm] The third summand $\tau_{\mu}$ in \eqref{splitting-eta-general} refers to the Maslov triple index defined in [KL, Definition 6.8]. The Maslov triple index is integer-valued and thus the result above leads in particular to a mod $\Z$ splitting formula for eta-invariants.  
\\[3mm] Leaving for the moment these general constructions aside, we continue in the setup of the Section \ref{gluing-statement}. We adapt the constructions of [KL, Section 8.1] to the present situation. Let $\iota: N \hookrightarrow M$ denote the inclusion of the splitting hypersurface $N$ into the closed split-manifold $M$. Define the restriction map: 
\begin{align*}
R: \Omega^{\textup{even}}(M,E\oplus E)&\to \Omega^*(N, (E\oplus E)|_N), \\
\beta &\mapsto \iota^*(\beta) + \iota^*(\widetilde{*}\beta), 
\end{align*}
where $\widetilde{*}$ is the Hodge-star operator on the oriented Riemannian manifold $M$ acting antidiagonally on $E \oplus E$ with the following matrix form: 
\begin{align*}
\widetilde{*}=\left(\begin{array}{cc}0 & * \\ * & 0\end{array}\right).
\end{align*}
It is further related as follows to the chirality operator $\GG$ of the Hilbert complex $(\domm, \DD)$: 
\begin{align*}
\GG :=i^r(-1)^{\frac{k(k+1)}{2}}\widetilde{*}:\Omega^k(M,E\oplus E)\to \Omega^{m-k}(M,E\oplus E), \\ \textup{where} \ r:=(\dim M +1)/2.
\end{align*}
The restriction map $R$ induces with $\mathcal{U}\cong (-\epsilon, \epsilon)\times N$ the following identification: 
$$\Phi: \Omega^{\textup{even}}(\mathcal{U}, (E\oplus E)|_{\mathcal{U}})\to C^{\infty}((-\epsilon, \epsilon),\Omega^*(N, (E\oplus E)|_N)),$$
which extends to an isometry on the $L^2-$completions of the spaces due to the product structure of the metrics. The isometric transformation preserves the spectral properties of the transformed operators. Hence we can equivalently deal with the even part $\B_{\textup{even}}$ of the odd-signature operator associated to the Hilbert complex $(\domm, \DD)$ over $M$, under the isometric transformation $\Phi$. 
\\[3mm] The assumption of temporal gauge for the connection $\D$ implies with the same calculations as in [KL, Section 8.1]:  
\begin{align}\label{produkt}
\Phi \circ \B_{\textup{even}}\circ \Phi^{-1}=\gamma \left[\frac{d}{dx}+A\right],
\end{align}
where the operators $\gamma$ and $A$ are of the following form (compare [KL, Section 8.1]):
\begin{align*}
\gamma (\beta)=\left\{\begin{array}{ll}i^r(-1)^{p-1}\widetilde{*}_N\beta, & \textup{if} \ \beta \in \Omega^{2p}(N, (E\oplus E)|_N), \\
 i^r(-1)^{r-p-1}\widetilde{*}_N\beta, & \textup{if} \ \beta \in \Omega^{2p+1}(N, (E\oplus E)|_N).\end{array}\right. \\
 A (\beta)=\left\{\begin{array}{ll}-(\DD_N\widetilde{*}_N+\widetilde{*}_N\DD_N)\beta, & \textup{if} \ \beta \in \Omega^{2p}(N, (E\oplus E)|_N), \\
(\DD_N\widetilde{*}_N+\widetilde{*}_N\DD_N)\beta, & \textup{if} \ \beta \in \Omega^{2p+1}(N, (E\oplus E)|_N).\end{array}\right.
\end{align*}
Here $\DD_N= \D_N \oplus \D_N$ where $\D_N$ is the flat connection on $E|_N$ whose pullback to $E|_{\mathcal{U}}$ gives $\D|_{\mathcal{U}}$. Further $\widetilde{*}_N$ is the Hodge-star operator on $N$ acting anti-diagonally on $(E\oplus E)|_N$. We write 
\begin{align*}
\DD_N=\left(\begin{array}{cc}\D_N & 0 \\ 0 & \D_N \end{array}\right), \ \widetilde{*}_N=  \left(\begin{array}{cc}0 & *_N \\ *_N & 0 \end{array}\right).
\end{align*}                                                                          
\\[3mm] Consider next the odd signature operators $\B^j_{\textup{even}},j=1,2$ viewed as boundary value problems for $\B_{\textup{even}}|_{M_j}, j=1,2$ where the boundary conditions are to be identified. To visualize the structure involved, we distinguish notationally each direct sum component in $E \oplus E$: $$E\oplus E\equiv E^+\oplus E^-.$$ Decompose now $\Omega^*(N,(E\oplus E)|_N)$ as follows:
\begin{align}\label{decomp}
&\Omega^*(N, (E\oplus E)|_N)= \\ &\left[\Omega^{\textup{even}}(N,E^+|_N)\oplus \Omega^{\textup{odd}}(N,E^+|_N)\right] \oplus \left[\Omega^{\textup{even}}(N,E^-|_N)\oplus \Omega^{\textup{odd}}(N,E^-|_N)\right].\nonumber
\end{align}
The restriction map $R$ acts with respect to this decomposition as follows:
\begin{align}\label{restriction-decomposition}
R(\beta^+\oplus \beta^-)=[\iota^*(\beta^+)\oplus \iota^*(*\beta^-)]\oplus[\iota^*(\beta^-)\oplus \iota^*(*\beta^+)], \\
\textup{where} \ \beta^+\oplus \beta^-\in \Omega^{\textup{even}}(M,E^+\oplus E^-).
\nonumber 
\end{align}
Furthermore with respect to this decomposition operators $\gamma$ and $A$ are given by the following matrix form:
\begin{align}\label{ya}
\gamma&=\left(\begin{array}{cc}0 & \overline{\gamma} \\ \overline{\gamma} & 0 \end{array}\right), \ A=\left(\begin{array}{cc}0 & \overline{A} \\ \overline{A} & 0 \end{array}\right), \\ \nonumber
\overline{\gamma} (\beta)&=\left\{\begin{array}{ll}i^r(-1)^{p-1}*_N\beta, & \textup{if} \ \beta \in \Omega^{2p}(N, E|_N), \\
 i^r(-1)^{r-p-1}*_N\beta, & \textup{if} \ \beta \in \Omega^{2p+1}(N, E|_N).\end{array}\right. \\ \nonumber
 \overline{A} (\beta)&=\left\{\begin{array}{ll}-(\D_N*_N+*_N\D_N)\beta, & \textup{if} \ \beta \in \Omega^{2p}(N, E|_N), \\
(\D_N*_N+*_N\D_N)\beta, & \textup{if} \ \beta \in \Omega^{2p+1}(N, E|_N).\end{array}\right.
\end{align}
Note that $\gamma$ and $A$ satisfy the conditions \eqref{comm-gamma-A}. Recall now from [BV3, Lemma 3.6]
$$\dom (\B^j)=\dom \left(D^{GB}_{j, \textup{rel}}\oplus D^{GB}_{j, \textup{abs}}\right),$$ where $D^{GB}_j$ is the Gauss-Bonnet operator on $M_j$ associated to the connection $\D_j$. Hence the boundary conditions for $\B^j_{\textup{even}}$ are given as follows (see [BL1, Theorem 4.1], where the arguments are performed in the untwisted setup, but transfer analogously to the twisted case, provided product metric structures and a flat connection in temporal gauge)
\begin{align*}
&\beta =\beta^+\oplus \beta^- \in \dom (\B^j_{\textup{even}})\cap \Omega^{\textup{even}}(M_j, E^+\oplus E^-), \\
\textup{hence} \quad &\beta^+ \in \dom (D^{GB}_{\textup{rel}}), \quad \beta^- \in \dom (D^{GB}_{\textup{abs}}), \\
\textup{hence} \quad &\iota_j^*(\beta^+)=0, \quad \iota_j^*(*\beta^-)=0.
\end{align*}
According to \eqref{restriction-decomposition} we obtain under the isometric identification $\Phi$ over $U\cap M_j$ and with respect to the decomposition \eqref{decomp} the following matrix form for the boundary operators of $\B^j_{\textup{even}}$
\begin{align}
P=\left(\begin{array}{cccc} 1&0&0&0 \\0&1&0&0 \\0&0&0&0 \\ 0&0&0&0 \end{array}\right).
\end{align}
Note that $(I-P)$ again provides boundary conditions for $\B^j_{\textup{even}}$ with the components $E^+, E^-$ interchanged. The boundary operator $P$ obviously extends to a pseudo-differential operator of order zero. One checks explicitly by matrix calculations that $P$ satisfies the conditions \eqref{boundary2} and \eqref{boundary3}. The condition \eqref{boundary4} remains to be verified. 
\\[3mm] Being elliptic and self-adjoint, $A$ has discrete real spectrum with finite multiplicities. Discreteness of $A$ together with self-adjointness of $\B^j_{\textup{even}}$ implies with [BL3, Corollary 4.6 and Theorem 5.6] that $P$ indeed satisfies \eqref{boundary4}.
\\[3mm] Thus the conditions for the application of \eqref{splitting-eta-general} are satisfied and we obtain
\begin{align}\label{splitting-eta}
\eta(\B_{\textup{even}})=\eta(\B^1_{\textup{even}})+\eta(\B^2_{\textup{even}})-\tau_{\mu}(I-P_1,P,P_1),
\end{align}
where the operators $\B^1_{\textup{even}}, \B^2_{\textup{even}}$ denote the even parts of the odd-signature operators associated to the Hilbert complexes $(\domm_1,\DD_1), (\domm_2,\DD_2)$ respectively. Equivalently they are the self-adjoint realizations of the differential operators $\B_{\textup{even}}|_{M_1}, \B_{\textup{even}}|_{M_2}$ with the boundary conditions $P$ and $(I-P)$ respectively. Further $P_1$ denotes the Calderon projector of $\B^1_{\textup{even}}$ and $\tau_{\mu}$ the Maslov triple index.
\\[3mm] Due to self-adjointness of the odd-signature operator, the notion of (reduced) eta-invariant in [KL] for $\B_{\textup{even}}$ and $\B^j_{\textup{even}}, j=1,2$ coincides with the setup of [BV3, (4.1)].
\\[3mm] We obtain an analogous splitting formula in case of a trivial line bundle $M\times \C$ in the notation of [BV3, Proposition 4.6]
\begin{align}\label{splitting-eta-trivial}
\eta(\B_{\textup{trivial}})=\eta(\B^1_{\textup{trivial}})+\eta(\B^2_{\textup{trivial}})-\tau_{\mu}(I-P_{1, \textup{trivial}},P_{\textup{trivial}},P_{1, \textup{trivial}}),
\end{align}
with the obvious notation. This formula will be necessary in order to obtain a splitting for the metric-anomaly term in refined analytic torsion.
\\[3mm] In other words, the phase of refined analytic torsion is given in part by the rho invariant of $\B_{\textup{even}}$, which is defined (cf. [KL, Definition 8.17]) as the eta-invariant of the operator minus the metric anomaly correction term. The results \eqref{splitting-eta} and \eqref{splitting-eta-trivial} give together a splitting formula for the rho-invariant, which constitutes to the complex phase of refined analytic torsion.

\section{Poincare Duality for manifolds with boundary}\label{PD-boundary}
We continue in the setup and the notation, fixed in Section \ref{gluing-statement}. Denote by $\triangle_{j, \textup{rel/abs}}$ the Laplacians of the Hilbert complexes $(\dom_{j, \textup{min/max}},\D_{j, \textup{min/max}})$ respectively. The coefficient $j$ refers to the base manifold $M_j, j=1,2$. Consider the Hodge star operator $*$ on $M$ and the associated chirality operator 
$$\G :=i^r(-1)^{\frac{k(k+1)}{2}}*:\Omega^k(M,E)\to \Omega^{m-k}(M,E), \quad r:= (\dim M+1)/2.$$
We do not indicate the restriction of $\G$ to $M_j, j=1,2$ by a subscript $j$, since it will always be clear from the action. By the properties of the chirality operator $\G$ in [BV3, Lemma 3.3], we infer:
\begin{align}\label{PD-laplace}
\triangle_{j, \textup{abs/rel}}=\G \circ \triangle_{j, \textup{rel/abs}}\circ \G, \\
\textup{and hence } \  \G : \ker \triangle_{j, \textup{rel/abs}} \ \widetilde{\rightarrow} \ \ker \triangle_{j, \textup{abs/rel}}.
\end{align}
Note the Hodge isomorphisms
\begin{align*}
\ker \triangle_{j, \textup{rel/abs}} \ \widetilde{\rightarrow} \ H^*_{\textup{rel/abs}}(M_j,E), \quad \phi \ \longmapsto \ [\phi],
\end{align*}
where $H^*_{\textup{rel/abs}}(M_j,E)$ denote the de Rham cohomologies of the Fredholm complexes $(\dom_{j, \textup{min/max}}, \D_{j, \textup{min/max}}), j=1,2$ respectively. Hence the chirality operator induces under the Hodge isomorphisms the so-called Poincare duality on manifolds with boundary:
 $$\G : H^k_{\textup{rel/abs}}(M_j,E) \ \widetilde{\rightarrow} \ H^{m-k}_{\textup{abs/rel}}(M_j,E), \quad k=0,..,m=\dim M.$$
Next we introduce two pairings. Let $\w_1=s_1\otimes \chi_1$ and $\w_2=s_2\otimes \chi_2$ be two differential forms in $\Omega^*(M_j,E)$ with $s_1,s_2 \in C^{\infty}(M_j,E)$ and $\chi_1, \chi_2\in \Omega^*(M_j)$. Put: $$h^E(\w_1\wedge \w_2):=h^E(s_1,s_2)\cdot \chi_1 \wedge \chi_2.$$ This action extends by linearity to arbitrary differential forms in $\Omega^*(M_j,E)$. With this notation we define a pairing, which is the Hilbert structure on $H^*_{\textup{rel/abs}}(M_j,E)$ induced by the $L^2-$structure on the harmonic forms:
\begin{align}
\langle \cdot \, , \cdot \rangle_{L^2_j}:H^k_{\textup{rel/abs}}(M_j,E)&\times H^k_{\textup{rel/abs}}(M_j,E) \ \rightarrow \ \C \nonumber \\
[\w]&, [\eta] \ \longmapsto \ \int_{M_j}h^E(\w \wedge *\eta),
\end{align}
where $\w$ and $\eta$ are the harmonic representatives of $[\w]$ and $[\eta]$ respectively. The second pairing is the Poincare duality on Riemannian manifolds with boundary and is in fact independent of a choice of representatives:
\begin{align}
\langle \cdot \, , \cdot \rangle_{P_j}:H^k_{\textup{rel/abs}}(M_j,E)&\times H^{m-k}_{\textup{abs/rel}}(M_j,E) \ \rightarrow \ \C \nonumber \\
[\w]&, [\eta] \ \longmapsto \ \int_{M_j}h^E(\w \wedge \eta).
\end{align}
Both pairings are non-degenerate and induce canonical identifications between cohomology and its dual:
\begin{align*}
&\#_{L^2_j}:H^k_{\textup{rel/abs}}(M_j,E)\widetilde{\rightarrow} \left(H^k_{\textup{rel/abs}}(M_j,E)\right)^*, \quad [\w] \ \longmapsto \langle \cdot \, , \ [\w]\rangle_{L^2_j}, \\
&\#_{P_j}:H^k_{\textup{rel/abs}}(M_j,E)\widetilde{\rightarrow} \left(H^{m-k}_{\textup{abs/rel}}(M_j,E)\right)^*, \quad [\w] \ \longmapsto \langle \cdot \, , \ [\w]\rangle_{P_j}.
\end{align*}
Both maps are linear, with the Hermitian metric $h^E$ set to be linear in the second component. The next proposition puts the constructions above into relation:
\begin{prop}\label{PD-relation} The action of $\#_{L^2_j}\circ \G $ and $\#_{P_j}$ on $H^k_{\textup{rel/abs}}(M_j,E)$ satisfies
$$\#_{L^2_j}\circ \G = i^r (-1)^{\frac{k(k+1)}{2}}\#_{P_j}, \quad r=(\dim M +1)/2.$$
\end{prop}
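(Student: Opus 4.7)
Both maps $\#_{L^2_j} \circ \Gamma$ and $\#_{P_j}$ send a class in $H^k_{\textup{rel/abs}}(M_j, E)$ to an element of the dual space $\left(H^{m-k}_{\textup{abs/rel}}(M_j, E)\right)^*$, so I will verify the identity by pairing both sides with an arbitrary test class $[\eta] \in H^{m-k}_{\textup{abs/rel}}(M_j, E)$ and checking that the two resulting complex numbers coincide.

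The first step is to select harmonic representatives. Let $\omega$ be the harmonic representative of $[\omega] \in H^k_{\textup{rel/abs}}(M_j, E)$ and $\eta$ that of $[\eta]$. By \eqref{PD-laplace} the chirality operator $\Gamma$ intertwines the two Laplacians, so $\Gamma\omega$ is harmonic and is therefore a legitimate representative of $\Gamma[\omega] \in H^{m-k}_{\textup{abs/rel}}(M_j,E)$. Unwinding definitions, the left-hand side evaluated on $[\eta]$ becomes
\begin{equation*}
\#_{L^2_j}(\Gamma[\omega])([\eta]) \;=\; \langle [\eta], \Gamma[\omega]\rangle_{L^2_j} \;=\; \int_{M_j} h^E(\eta \wedge * \Gamma \omega),
\end{equation*}
while the right-hand side evaluated on $[\eta]$ equals $i^r(-1)^{k(k+1)/2}\int_{M_j} h^E(\eta \wedge \omega)$.

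The second step is a pointwise identification of the integrands. Since $\omega$ has degree $k$, the definition of $\Gamma$ gives $\Gamma \omega = i^r(-1)^{k(k+1)/2}*\omega$, hence $*\Gamma \omega = i^r(-1)^{k(k+1)/2} \,**\omega$. Here the key observation is that $\dim M = m$ is odd, so $k(m-k)$ is even for every $k$, which forces $** = \textup{id}$ on $\Omega^k(M_j, E)$. Therefore $*\Gamma\omega = i^r(-1)^{k(k+1)/2}\,\omega$. Pulling this scalar out of the integral using linearity of $h^E$ in the second slot yields precisely $i^r(-1)^{k(k+1)/2}\int_{M_j} h^E(\eta \wedge \omega)$, matching the right-hand side.

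The argument is essentially bookkeeping of signs and factors of $i$; there is no substantive obstacle. The two points that require attention are (a) that the two sides indeed lie in the same dual space, which requires the Poincar\'e duality pattern $\textup{rel} \leftrightarrow \textup{abs}$ effected by $\Gamma$, and (b) that $** = \textup{id}$ on $k$-forms because $\dim M$ is odd, which is what eliminates the remaining Hodge-star and produces the exact prefactor $i^r(-1)^{k(k+1)/2}$ appearing in the statement.
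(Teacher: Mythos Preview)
Your proof is correct and follows essentially the same route as the paper: evaluate both sides on a test class $[\eta]\in H^{m-k}_{\textup{abs/rel}}(M_j,E)$ and reduce the comparison to the pointwise identity $*\Gamma\omega = i^r(-1)^{k(k+1)/2}\omega$. The only cosmetic difference is that you simplify $*\Gamma\omega$ via $**=\textup{id}$ (using $m$ odd), whereas the paper writes $*=i^{-r}(-1)^{(m-k)(m-k+1)/2}\Gamma$ on $(m-k)$-forms and invokes $\Gamma^2=\mathbf{1}$; both computations are equivalent and yield the same prefactor.
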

\begin{proof}
Let $[\w]\in H^k_{\textup{rel/abs}}(M_j,E)$ and $[\eta]\in H^k_{\textup{abs/rel}}(M_j,E)$ with $\w, \eta$ being the harmonic representatives of $[\w],[\eta]$ respectively. Using $\G^2=\textbf{1}$ we get 
\begin{align*}
*\G\w = i^{-r}(-1)^{\frac{(m-k)(m-k+1)}{2}}\G\circ \G \w=
\\ = i^{-r}(-1)^{\frac{k(k+1)}{2}+\frac{m(m+1)}{2}}\w= i^r(-1)^{\frac{k(k+1)}{2}}\w.
\end{align*}
Due to linearity of the Hermitian metric in the second component we finally obtain:
\begin{align*}
(\#_{L^2_j}\circ \G)([\w])[\eta]=\langle [\eta],\G [\w]\rangle_{L^2_j}=\int_{M_j}h^E(\eta \wedge *\G \w)=\\= i^r (-1)^{\frac{k(k+1)}{2}}\int_{M_j}h^E(\eta \wedge \w)= i^r (-1)^{\frac{k(k+1)}{2}}\#_{P_j}([\w])[\eta].
\end{align*}
\end{proof}\ \\
\\[-7mm] A similar discussion works also on the closed Riemannian split manifold $M$. In particular we obtain as before the pairings $\langle \cdot \, , \cdot \rangle_{L^2}$ and $\langle \cdot \, , \cdot \rangle_P$ with the associated identifications $\#_{L^2}$ and $\#_{P}$ respectively, over the manifold $M$. As in Proposition \ref{PD-relation} we obtain for the action of $\#_{L^2}\circ \G $ and $\#_{P}$ on $H^k(M,E)$ the following relation
\begin{align}
\#_{L^2}\circ \G = i^r (-1)^{\frac{k(k+1)}{2}}\#_{P}, \quad r=(\dim M +1)/2.
\end{align} 
Next we consider a complex, that takes the splitting $M=M_1 \cup_N M_2$ into account. Let $\iota_{j}:N\hookrightarrow M_j, j=1,2$ be the natural inclusions. Put
$$\Omega^*(M_1\#M_2,E):=\{(\w_1,\w_2) \in \Omega^*(M_1,E)\oplus \Omega^*(M_2,E)|\iota^*_1\w_1=\iota^*_2\w_2\}.$$
Denote the restrictions of the flat connection $\D$ to $M_j, j=1,2$ by $\D_j$, and extend the restrictions by Leibniz rule to operators on the complexes $\Omega^*(M_j,E),j=1,2$. We put further
$$\D_S(\w_1,\w_2):=(\D_1\w_1,\D_2\w_2).$$
This operation respects the transmission condition of $\Omega^*(M_1\#M_2,E)$ and further its square is obviously zero. Therefore $\D_S$ turns the graded vector space $\Omega^*(M_1\#M_2,E)$ into a complex, denoted by
\begin{align}\label{split-complex}
(\Omega^*(M_1\#M_2,E), \D_S).
\end{align}
The natural $L^2-$structure on $\Omega^*(M_1,E)\oplus \Omega^*(M_2,E)$, induced by the metrics $g^M$ and $h^E$ is defined on any $\w=(\w_1,\w_2), \eta=(\eta_1,\eta_2)$ as follows 
\begin{align}
\langle \w , \eta \rangle_{L^2}:=\sum\limits_{j=1}^2\langle \w_j, \eta_j\rangle_{L^2_j}.
\end{align}
In order to analyze the associated Laplace operators, consider first the adjoint to $\D_S$ operator $\D^*_S$ in $\Omega^*(M_1,E)\oplus \Omega^*(M_2,E)$ with domain of definition $\dom (\D^*_S)$ consisting of elements $\w=(\w_1,\w_2) \in \Omega^*(M_1,E)\oplus \Omega^*(M_2,E)$ such that the respective linear functionals on any $\eta =(\eta_1,\eta_2) \in \Omega^*(M_1\#M_2,E)$
\begin{align*}
L_{\w}(\eta)=\langle \w , \D_S\eta\rangle_{L^2}
\end{align*}
are continuous in $\Omega^*(M_1\#M_2,E)$ with respect to the natural $L^2-$norm of $\eta$. As a consequence of Stokes' formula we find for such elements $\w \in \dom (\D^*_S)$ that the following transmission condition has to hold 
\begin{align}\label{split-gamma}
*\w=(*\w_1,*\w_2)\in \Omega^*(M_1\#M_2,E),
\end{align}
where $*$ also denotes the restrictions of the usual Hodge star operator on $M$ to $M_1$ and $M_2$. The Laplace operator $\triangle_S=\D_S^*\D_S+\D_S\D_S^*$ of the complex \eqref{split-complex} acts on the obvious domain of definition
\begin{align}\label{dom-def}
\dom (\triangle_S)=\{&\w \in \Omega^*(M_1\#M_2,E)| \\ &\w \in \dom (\D^*_S), \D_S\w \in \dom (\D^*_S), \D^*_S\w \in \Omega^*(M_1\#M_2,E)\}. \nonumber 
\end{align}
The $Dom (\triangle_S)$ is defined as the completion of $\dom(\triangle_S)$ with respect to the graph topology norm. The Laplacian $\triangle_S$ with domain $Dom (\triangle_S)$ is a self-adjoint operator in the $L^2-$completion of $\Omega^*(M_1,E)\oplus \Omega^*(M_2,E)$. 
\\[3mm] For the spectrum of $\triangle_S$ we refer to the theorem below, established essentially by S. Vishik in [V, Proposition 1.1] in the untwisted setup.
\begin{thm}\label{split-laplacian}
The generalized eigenforms of the Laplacian $\triangle_S$ and the generalized eigenforms of the Laplacian $\triangle$ associated to the twisted de Rham complex $(\Omega^*(M,E),\D)$ coincide.
\end{thm}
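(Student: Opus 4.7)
The plan is to establish both inclusions between the two eigenspaces separately, using the product structure near $N$ and elliptic regularity.

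For the straightforward direction, let $\w$ be a generalized eigenform of $\triangle$ with eigenvalue $\lambda$. Since $\w$ is smooth on all of $M$, the restrictions $(\w|_{M_1}, \w|_{M_2})$ automatically satisfy the transmission condition $\iota_1^*\w_1 = \iota_2^*\w_2$; the Hodge dual $*\w$ is likewise smooth on $M$, giving the analogous condition \eqref{split-gamma}. Higher-order transmission requirements in \eqref{dom-def} follow inductively from smoothness of $\w$ and $\triangle\w = \lambda\w$. Thus $\w \in \dom(\triangle_S)$ and $\triangle_S \w = \lambda \w$.

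The substantive direction is the converse: given $\w = (\w_1, \w_2) \in \dom(\triangle_S)$ with $\triangle_S\w = \lambda\w$, I would show that $\w$ glues to a smooth form on $M$ lying in $\dom(\triangle)$. The first step is interior and boundary elliptic regularity: each $\w_j$ satisfies the elliptic equation $\triangle_j\w_j = \lambda\w_j$ on $M_j$, and the transmission conditions at $N$ give a well-posed elliptic boundary problem, so each $\w_j$ is smooth up to $N$. The second step is to exploit the product metric structure and temporal gauge on the collar $\mathcal{U} \cong (-\epsilon,\epsilon)\times N$: there the twisted Laplacian decouples into a normal and tangential part, taking the schematic form $\triangle = -\partial_x^2 + \triangle_N$ on appropriately decomposed forms, because the connection one-form is $x$-independent with vanishing $dx$-component by \eqref{temp-stern}.

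The third step is to translate the domain conditions \eqref{dom-def} into explicit matching conditions at $x = 0$. The condition $\w \in \Omega^*(M_1\#M_2,E)$ gives continuity of the tangential part at $N$; the condition $*\w \in \Omega^*(M_1\#M_2,E)$ from \eqref{split-gamma} gives continuity of the normal part; the condition $\D_S\w \in \dom(\D_S^*)$ and the symmetric condition on $\D_S^*\w$ produce matching of $\partial_x \w$ at $N$. These are zeroth and first-order Cauchy data at $\{0\}\times N$. Since $\w$ satisfies the second-order elliptic equation $(-\partial_x^2 + \triangle_N)\w = \lambda\w$ on each side of the collar, matching Cauchy data of orders zero and one forces all higher normal jets to agree as well, by differentiating the equation in $x$ and using the matching of tangential jets inductively.

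The main obstacle I expect is precisely this third step: carefully verifying that the finitely many transmission conditions built into $\dom(\triangle_S)$ really do enforce $C^\infty$ matching of all normal derivatives of $\w$ across $N$, rather than matching only up to some finite order. The bridge is the eigenequation itself, which turns finite-order matching into infinite-order matching by bootstrapping; I would structure the argument inductively on the order of the normal derivative, using the form decomposition of $\triangle$ in the collar to express $\partial_x^{k+2}\w$ in terms of $\partial_x^{k}\w$ and tangential derivatives. Once smoothness across $N$ is established, the glued form lies in the maximal domain and hence in $\dom(\triangle)$, completing the identification of the two eigenspaces.
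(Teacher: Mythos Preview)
Your proposal is correct and follows essentially the same route as the paper: both directions are handled identically, and for the substantive converse you extract zeroth- and first-order matching of normal derivatives from the transmission conditions and then bootstrap to all orders via the eigenequation and the product form of $\triangle$ in the collar. Your presentation is in fact somewhat more explicit than the paper's (which phrases the iteration as ``applying the conditions to $(\triangle_1^k\w_1,\triangle_2^k\w_2)$'' and leaves elliptic regularity implicit), but the underlying mechanism is the same.
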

\begin{proof}
The conditions for $(\w_1,\w_2)\in \dom (\triangle_S)$ translate with \eqref{split-gamma} equivalently to
\begin{align}
&\iota^*_1\w_1=\iota^*_2\w_2, \quad \iota^*_1(*\w_1)=\iota^*_2(*\w_2),\nonumber \\
&\iota^*_1(*\D_1\w_1)=\iota^*_2(*\D_2\w_2), \quad \iota^*_1(\D^t_1\w_1)=\iota^*_2(\D^t_2\w_2).\label{split-conditions}
\end{align}
Any eigenform $\w$ of $\triangle$ is smooth and thus $(\w|_{M_1},\w|_{M_2})$ satisfies the conditions \eqref{split-conditions}. Thus any eigenform $\w\equiv (\w|_{M_1},\w|_{M_2})$ of $\triangle$ belongs to $\dom (\triangle_S)$ and hence is an eigenform of $\triangle_S$. We need to show the converse statement.
\\[3mm] Let $(\w_1,\w_2)\in \dom (\triangle_S)$ be an eigenform of $\triangle_S$. Then for any $k \in \N$ the element $(\triangle_1^k\w_1, \triangle_2^k\w_2)$ satisfies the conditions \eqref{split-conditions}. Fix local coordinates $(x,y)$ in the collar neighborhood $(-\epsilon, \epsilon) \times N$ of $N \subset M$ with $x\in (-\epsilon, \epsilon)$ being the normal coordinate and $y \in N$ the local coordinates on $N$. Then the conditions \eqref{split-conditions} imply for $k=1$ $$\frac{\partial \w_1(x=0,y)}{\partial x}=\frac{\partial \w_2(x=0,y)}{\partial x}.$$ Iterative application of the conditions \eqref{split-conditions} to $(\triangle_1^k\w_1, \triangle_2^k\w_2)$ for $k \in \N$ shows
\begin{align}\label{smoothness}
\forall k \in \N: \ \frac{\partial^k \w_1(x=0,y)}{\partial x^k}=\frac{\partial^k \w_2(x=0,y)}{\partial x^k}.
\end{align}
The eigenform $(\w_1,\w_2)$ consists of smooth eigenforms $\w_j$ over $M_j,j=1,2$. The result \eqref{smoothness} shows smoothness on $N\subset M$. Thus $(\w_1,\w_2)$ can be viewed as a smooth differential form over $M$ and so lies in $\dom (\triangle)$ and hence is an eigenform of $\triangle$ as well. This proves the theorem.
\end{proof}
\begin{cor}\label{split-cohomology}
The Laplacian $\triangle_S$ on $Dom(\triangle_S)$ is a Fredholm operator and 
\begin{align*}
H^*(M_1\#M_2,E):=H^*(\Omega^*(M_1\#M_2,E),\D_S)\cong H^*_{\textup{dR}}(M,E).
\end{align*}
\end{cor}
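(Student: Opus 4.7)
The plan is to deduce the corollary from Theorem \ref{split-laplacian} together with standard Hodge-theoretic reasoning for Hilbert complexes. Since Theorem \ref{split-laplacian} identifies the generalized eigenforms of $\triangle_S$ with those of the twisted Laplacian $\triangle$ on the closed manifold $M$, all spectral data---in particular the zero eigenspaces---transfer between the two operators, and what remains is to package this into statements about the operator and about cohomology.

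First I would address Fredholmness. The twisted Laplacian $\triangle$ on the closed manifold $M$ is elliptic and self-adjoint with purely discrete spectrum and finite-dimensional eigenspaces. Theorem \ref{split-laplacian} identifies the generalized eigenforms of $\triangle_S$ on $Dom(\triangle_S)$ with those of $\triangle$, so $\triangle_S$ inherits a purely discrete spectrum with finite multiplicities. Combined with self-adjointness of $\triangle_S$ on $Dom(\triangle_S)$, this forces $\ker \triangle_S$ to be finite-dimensional and the range of $\triangle_S$ to be closed with cokernel isomorphic to the kernel, which is precisely Fredholmness.

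Second, I would pass from the Laplacian to cohomology by invoking Hodge theory for Hilbert complexes in the sense of [BL1]. The $L^2$-completion of $(\Omega^*(M_1\#M_2,E),\D_S)$, with $\D_S$ taken on the maximal domain incorporating the transmission conditions \eqref{split-conditions}, forms a Hilbert complex whose associated Laplacian is precisely $\triangle_S$. Once Fredholmness has been established, the weak Hodge decomposition yields
$$H^*(\Omega^*(M_1\#M_2,E),\D_S) \cong \ker \triangle_S.$$
By Theorem \ref{split-laplacian} we have $\ker \triangle_S = \ker \triangle$, and by classical Hodge theory on the closed manifold $M$ this kernel is canonically isomorphic to $H^*_{\textup{dR}}(M,E)$. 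Composing these identifications delivers the corollary.

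The main subtlety is reconciling the \emph{smooth} complex $(\Omega^*(M_1\#M_2,E),\D_S)$ appearing in the statement with the Hilbert-space completion on which the Hodge decomposition is formulated---the same smooth-versus-$L^2$ comparison encountered in [BV3, Theorem 3.2] for the minimal and maximal extensions on $M_j$. Fortunately Theorem \ref{split-laplacian} resolves this: every harmonic element of $\triangle_S$ is, when reassembled across $N$, a smooth harmonic form on $M$, hence automatically lies in the smooth split complex. A routine chase then confirms that each class in $H^*(\Omega^*(M_1\#M_2,E),\D_S)$ admits a unique such harmonic representative, which is exactly the content of the isomorphism claimed.
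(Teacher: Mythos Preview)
Your proposal is correct and follows exactly the route the paper intends: the paper's own argument is the single sentence ``the corollary is an obvious consequence of Theorem \ref{split-laplacian} and the Hodge-isomorphism,'' and you have simply unpacked what that sentence means---transferring the discrete spectrum and finite-dimensional kernel from $\triangle$ to $\triangle_S$ via Theorem \ref{split-laplacian}, then applying Hodge theory on both sides. Your attention to the smooth-versus-$L^2$ issue is a reasonable elaboration, and Theorem \ref{split-laplacian} indeed resolves it since the harmonic forms of $\triangle_S$ are shown there to be smooth across $N$.
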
\ \\
\\[-7mm] The corollary is an obvious consequence of Theorem \ref{split-laplacian} and the Hodge-isomorphism. Therefore the pairings $\langle \cdot \, , \cdot \rangle_{L^2}$ and $\langle \cdot \, , \cdot \rangle$ with the associated identifications $\#_{L^2}$ and $\#_{P}$ respectively, over the manifold $M$ give rise to pairings and maps on $H^*(M_1\#M_2,E)$. We do not introduce a distinguished notation for these induced constructions
\begin{align}
\langle \cdot \, , \cdot \rangle_{L^2}:H^k(M_1\#M_2,E)&\times H^k(M_1\#M_2,E) \ \rightarrow \ \C \nonumber \\
[(\w_1,\w_2)]&, [(\eta_1,\eta_2)] \ \longmapsto \ \sum\limits_{j=1}^2\int_{M_j}h^E(\w_j \wedge *\eta_j),\\
\langle \cdot \, , \cdot \rangle_{P}:H^k(M_1\#M_2,E)&\times H^{m-k}(M_1\#M_2,E) \ \rightarrow \ \C \nonumber \\
[(\w_1,\w_2)]&, [(\eta_1,\eta_2)] \ \longmapsto \ \sum\limits_{j=1}^2\int_{M_j}h^E(\w_j \wedge \eta_j),
\end{align}
where $(\w_1,\w_2), (\eta_1,\eta_2)$ are a priori harmonic representatives of the corresponding cohomology classes, due to the Hodge isomorphisms applied in the identification of Corollary \ref{split-cohomology}. A posteriori we find by the next lemma that the pairing $\langle \cdot \, , \cdot \rangle_{P}$ like the pairings $\langle \cdot \, , \cdot \rangle_{P_j},j=1,2$ is well-defined on cohomology classes, i.e. need not be evaluated on harmonic representatives only.
\begin{lemma}\label{harm-rep}
The pairing $\langle \cdot , \cdot \rangle_{P}$ is a well-defined pairing on cohomology.
\end{lemma}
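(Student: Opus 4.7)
The plan is to verify bilinear well-definedness by showing that if one of the two arguments is an exact cocycle in the split complex $(\Omega^*(M_1\#M_2,E),\D_S)$, while the other is closed, then the pairing vanishes. By bilinearity of the pairing it suffices to treat the first slot; the second slot follows by a completely symmetric calculation. Concretely, I would take $(\w_1,\w_2)=\D_S(\alpha_1,\alpha_2)=(\D_1\alpha_1,\D_2\alpha_2)$ with $(\alpha_1,\alpha_2)\in\Omega^*(M_1\#M_2,E)$, and $(\eta_1,\eta_2)\in\Omega^*(M_1\#M_2,E)$ with $\D_S(\eta_1,\eta_2)=0$, and aim to prove $\langle[(\w_1,\w_2)],[(\eta_1,\eta_2)]\rangle_{P}=0$.

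The core tool is the Leibniz rule for the flat Hermitian connection $\D$ on each piece $M_j$, which reads
\begin{equation*}
d\,h^E(\alpha_j\wedge\eta_j)=h^E(\D_j\alpha_j\wedge\eta_j)+(-1)^{|\alpha_j|}\,h^E(\alpha_j\wedge\D_j\eta_j).
\end{equation*}
Since $(\eta_1,\eta_2)$ is closed in the split complex, $\D_j\eta_j=0$ for $j=1,2$, so the second term drops out. Applying Stokes' theorem to each integral $\int_{M_j}h^E(\D_j\alpha_j\wedge\eta_j)$ reduces the pairing to a sum of boundary integrals over $\partial M_j=N$.

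Now I would invoke two facts to finish: first, $N$ inherits opposite orientations as the boundary of $M_1$ and as the boundary of $M_2$, so that $\partial M_2=-\partial M_1$ as oriented manifolds; second, the transmission conditions built into $\Omega^*(M_1\#M_2,E)$ give $\iota_1^*\alpha_1=\iota_2^*\alpha_2$ and $\iota_1^*\eta_1=\iota_2^*\eta_2$. Combining these, the sum of the two boundary integrals becomes
\begin{equation*}
\int_N h^E(\iota_1^*\alpha_1\wedge\iota_1^*\eta_1)-\int_N h^E(\iota_2^*\alpha_2\wedge\iota_2^*\eta_2)=0,
\end{equation*}
which is the desired vanishing.

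This argument is really just Stokes' theorem assembled against the transmission condition, so there is no genuine obstacle; the only point that needs care is orientation bookkeeping at $N$, since it is precisely the mismatch $\partial M_2=-\partial M_1$ combined with matching traces that produces the cancellation. This also clarifies \emph{why} the transmission condition is built into the definition of $\Omega^*(M_1\#M_2,E)$ in the first place: it is exactly what makes the Poincar\'e pairing descend to cohomology. The same calculation with the roles of $(\w_1,\w_2)$ and $(\eta_1,\eta_2)$ interchanged (and the sign $(-1)^{|\alpha|}$ tracked through Leibniz) handles exactness in the second slot, completing the proof.
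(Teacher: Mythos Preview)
Your proof is correct and follows essentially the same approach as the paper: Leibniz rule for the flat Hermitian connection, Stokes' theorem, and cancellation of the boundary terms via the transmission condition together with the opposite induced orientations on $N$. The only cosmetic difference is that you reduce upfront by bilinearity to the case ``exact paired with closed,'' whereas the paper expands the product of two general representatives into three cross terms and treats each one (by the same Stokes/transmission argument) separately.
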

\begin{proof}
Let $[(\w_1,\w_2)]\in H^k(M_1\#M_2,E)$ be a cohomology class with a representative $(\w_1,\w_2)+\D_S(\gamma_1,\gamma_2)$ where $(\w_1,\w_2)\in \ker \D_S$ and $(\w_1,\w_2), (\gamma_1,\gamma_2) \in \Omega^*(M_1\#M_2,E)$, in particular $$\iota^*_1\w_1=\iota^*_2\w_2, \ \iota^*_1\gamma_1=\iota^*_2\gamma_2.$$
Similarly let $[(\eta_1,\eta_2)]\in H^{m-k}(M_1\#M_2,E)$. Choose a representative $(\eta_1,\eta_2)+\D_S(\xi_1,\xi_2)$ with $(\eta_1,\eta_2)\in \ker \D_S$ and $(\eta_1,\eta_2), (\xi_1,\xi_2) \in \Omega^*(M_1\#M_2,E)$. We compute
\begin{align}
\sum\limits_{j=1}^2\int_{M_j}h^E((\w_j+\D_j\gamma_j)\wedge (\eta_j + \D_j\xi_j))-\sum\limits_{j=1}^2\int_{M_j}h^E(\w_j\wedge \eta_j)= \nonumber\\
= \sum\limits_{j=1}^2\int_{M_j}h^E(\D_j\gamma_j\wedge \eta_j )+ \sum\limits_{j=1}^2\int_{M_j}h^E(\w_j\wedge \D_j\xi_j) + \nonumber \\ + \sum\limits_{j=1}^2\int_{M_j}h^E(\D_j\gamma_j\wedge \D_j\xi_j).\label{summands}
\end{align}
In order to verify that the pairing $\langle \cdot \, , \cdot \rangle_P$ is a well-defined pairing on cohomology we need to show that the last three summands in \eqref{summands} are zero. Consider the first summand, the other two are dealt with analogously. Under the assumption of flatness of $\D$ we get
\begin{align*}
d h^E(\gamma_j\wedge \eta_j)=h^E(\D_j\gamma_j \wedge \eta)+(-1)^{k-1}h^E(\gamma_j \wedge \D_j\eta)=h^E(\D_j\gamma_j \wedge \eta) \\
\Rightarrow \sum\limits_{j=1}^2\int_{M_j}h^E(\D_j\gamma_j\wedge \eta_j )= \sum\limits_{j=1}^2\int_{M_j}d h^E(\gamma_j\wedge \eta_j)=\\=\sum\limits_{j=1}^2\int_{\partial M_j}\iota^*_j h^E(\gamma_j\wedge \eta_j).
\end{align*}
Since $\iota^*_1\gamma_1=\iota^*_2\gamma_2$ and $\iota^*_1\eta_1=\iota^*_2\eta_2$ we find  $$\iota^*_1 h^E(\gamma_1\wedge \eta_1)=\iota^*_2 h^E(\gamma_2\wedge \eta_2).$$ However the orientations on $N=\partial M_1=\partial M_2$ induced from $M_1$ and $M_2$ are opposite. Hence the two integrals over $M_1$ and $M_2$ cancel. This completes the argumentation.
\end{proof}

\section{Commutative diagramms in cohomological algebra}\label{cohom-algebra} 
Consider the short exact sequences of complexes:
\begin{align*}
0 \rightarrow (\Omega^*_{\min}(M_1,E),\D_1)\xrightarrow{\A} (\Omega^*(M_1\#M_2,E),\D_S) \xrightarrow{\beta} (\Omega^*_{\max}(M_2,E),\D_2)\rightarrow 0, \\
0 \rightarrow (\Omega^*_{\min}(M_2,E),\D_2)\xrightarrow{\A'} (\Omega^*(M_1\#M_2,E),\D_S) \xrightarrow{\beta'} (\Omega^*_{\max}(M_1,E),\D_1)\rightarrow 0,
\end{align*}
where $\A(\w) =(\w,0), \A'(\w)=(0,\w)$ and $\beta(\w_1,\w_2)=\w_2, \beta'(\w_1,\w_2)=\w_1$. The exactness at the first and the second complex of both sequences is clear by construction. The surjectivity of $\beta$ and $\beta'$ is clear, since $\Omega^*_{\max}(M_j,E),j=1,2$ consists of smooth differential forms over $M_j$ which are in particular smooth at the boundary. These short exact sequences of complexes induce long exact sequences on cohomology:
\begin{align}\nonumber &\mathcal{H}: \ 
...H^k_{\textup{rel}}(M_1,E)\xrightarrow{\A^*}H^k(M_1\#M_2,E)\xrightarrow{\beta^*}H^k_{\textup{abs}}(M_2,E)\xrightarrow{\delta^*}H^{k+1}_{\textup{rel}}(M_1,E)... \\ \nonumber &\mathcal{H'}\!: ..H^k_{\textup{rel}}(M_2,E)\xrightarrow{\A'^*}H^k(M_1\#M_2,E)\xrightarrow{\beta'^*}H^k_{\textup{abs}}(M_1,E)\xrightarrow{\delta'^*}H^{k+1}_{\textup{rel}}(M_2,E)\\
\label{LES-H}
\end{align}
The first long exact sequence is related to the dual of the second long exact sequence by the diagramm below, where $\A'_*,\beta'_*,\delta'_*$ denote the dualizations of $\A'^*,\beta'^*,\delta'^*$ respectively.
\small{
\begin{align}\label{comm-diagramm}
\begin{array}{ccccccc}
H^k_{\textup{rel}}(M_1,E)\!\!&\!\! \xrightarrow{\A^*} \!\!&\!\! H^k(M_1\#M_2,E) \!\!&\!\! \xrightarrow{\beta^*} \!\!&\!\! H^k_{\textup{abs}}(M_2,E) \!\!&\!\!  \xrightarrow{\delta^*} \!\!&\!\! H^{k+1}_{\textup{rel}}(M_1,E) \\ \!\!&\!\! \!\!&\!\! \!\!&\!\! \!\!&\!\! \!\!&\!\! \!\!&\!\! \\
 \#_{L^2_1}\circ \G \downarrow \!\!&\!\!\!\!&\!\! \#_{L^2}\circ \G \downarrow \!\!&\!\!\!\!&\!\! \#_{L^2_2}\circ \G \downarrow  \!\!&\!\!  \!\!&\!\! \#_{L^2_1}\circ \G \downarrow \\ \!\!&\!\! \!\!&\!\! \!\!&\!\! \!\!&\!\! \!\!&\!\! \!\!&\!\! \\
H^{m-k}_{\textup{abs}}(M_1,E)^* \!\!\!&\!\! \xrightarrow{\beta'_*} \!\!\!&\!\! H^{m-k}(M_1\#M_2,E)^* \!\!\!&\!\! \xrightarrow{\A'_*} \!\!\!&\!\! H^{m-k}_{\textup{rel}}(M_2,E)^* \!\!\!&\!\! \xrightarrow{\delta'_*} \!\!\!&\!\! H^{m-k-1}_{\textup{abs}}(M_1,E)^* 
\end{array} 
\end{align}}
\normalsize

\begin{thm}\label{comm-diagramm2}
The diagramm \eqref{comm-diagramm} is commutative.
\end{thm}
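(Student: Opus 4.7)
The plan is to use Proposition \ref{PD-relation} to reduce the claim to the analogous commutativity statement with the Poincar\'e-duality maps $\#_{P_j}$ and $\#_P$ in place of $\#_{L^2_j}\circ\G$ and $\#_{L^2}\circ\G$; at that level the pairings are literal integrals and everything becomes transparent. The scalar prefactor $i^r(-1)^{k(k+1)/2}$ of Proposition \ref{PD-relation} depends only on the cohomological degree, so it is constant across the two degree-preserving squares and cancels there, whereas across the connecting square the degrees on the two legs differ by one and the prefactors differ by a relative factor $(-1)^{k+1}$. A matching sign will be produced by Stokes' theorem below, and the assumption that $\dim M$ is odd enters crucially at exactly this point.

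The two degree-preserving squares are essentially tautologies. For the first, given $[\w_1]\in H^k_{\textup{rel}}(M_1,E)$ and $[(\eta_1,\eta_2)]\in H^{m-k}(M_1\#M_2,E)$, the definitions $\A(\w_1)=(\w_1,0)$ and $\beta'(\eta_1,\eta_2)=\eta_1$ yield directly
\begin{align*}
\#_P(\A^*[\w_1])[(\eta_1,\eta_2)]&=\int_{M_1}h^E(\eta_1\wedge\w_1)+\int_{M_2}h^E(\eta_2\wedge 0)\\
&=\#_{P_1}[\w_1](\beta'^*[(\eta_1,\eta_2)])=\beta'_*(\#_{P_1}[\w_1])[(\eta_1,\eta_2)],
\end{align*}
and the second square is handled analogously using $\A'(\w_2)=(0,\w_2)$ and $\beta(\w_1,\w_2)=\w_2$. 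The main obstacle is the third square. I would begin by making the connecting homomorphisms explicit via the snake lemma: given closed $\w_2\in\Omega^k(M_2,E)$, pick an extension $\w_1\in\Omega^k(M_1,E)$ with $\iota_1^*\w_1=\iota_2^*\w_2$; then $\iota_1^*\D_1\w_1=\D_N\iota_2^*\w_2=0$ by closedness of $\w_2$, so $\delta^*[\w_2]=[\D_1\w_1]\in H^{k+1}_{\textup{rel}}(M_1,E)$, and analogously $\delta'^*[\eta_1]=[\D_2\eta_2]$ for any extension $(\eta_1,\eta_2)$ of a closed $\eta_1\in\Omega^{m-k-1}(M_1,E)$.

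Substituting these descriptions into the two legs of the third square reduces commutativity at the $\#_P$-level to the identity
\begin{align*}
\int_{M_1}h^E(\eta_1\wedge\D_1\w_1)\ =\ (-1)^{m-k}\int_{M_2}h^E(\D_2\eta_2\wedge\w_2).
\end{align*}
By flatness of $\D$ together with Stokes' theorem, exactly as in the proof of Lemma \ref{harm-rep}, and using closedness of $\eta_1$ on the left and of $\w_2$ on the right, the two integrals reduce to boundary integrals of $\iota^*h^E(\eta\wedge\w)$ over $\partial M_1$ and $\partial M_2$ respectively, picking up Leibniz signs $(-1)^{m-k-1}$ and $1$. Combining this with the fact that $\partial M_1$ and $\partial M_2$ induce opposite orientations on $N$ gives the displayed relation $\delta'_*\circ\#_{P_2}=(-1)^{m-k}\,\#_{P_1}\circ\delta^*$. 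Because $m$ is odd, $(-1)^{m-k}=(-1)^{k+1}$, which is precisely the sign needed to absorb the shift $(-1)^{(k+1)(k+2)/2-k(k+1)/2}=(-1)^{k+1}$ in the Proposition \ref{PD-relation} prefactor between degrees $k$ and $k+1$. The careful bookkeeping of these Leibniz and orientation signs is the heart of the proof; once it is in place, commutativity of the full diagram follows.
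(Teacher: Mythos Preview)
Your proposal is correct and follows essentially the same approach as the paper: reduce via Proposition \ref{PD-relation} to the $\#_P$-pairings, dispatch the first two squares by direct computation, and handle the connecting-homomorphism square by making $\delta^*,\delta'^*$ explicit and applying Stokes' theorem together with the opposite-orientation cancellation, tracking the sign $(-1)^{m-k}=(-1)^{k+1}$. The only organizational difference is that the paper passes through auxiliary quotient complexes $\overline{(\Omega^*_{\max}(M_j,E),\D_j)}$ to write down the connecting maps, whereas you invoke the snake lemma directly; the resulting integrals and sign bookkeeping are identical.
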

\begin{proof}
We need to verify commutativity of three types of squares in the diagramm. Consider the first type of squares:
\begin{align*}
\begin{xy}
  \xymatrix{
      H^k_{\textup{rel}}(M_1,E) \ar[r]^{\A^*} \ar[d]_{\#_{L^2_1}\circ \G}    &   H^k(M_1\#M_2,E) \ar[d]^{\#_{L^2}\circ \G}    \\
      H^{m-k}_{\textup{abs}}(M_1,E)^* \ar[r]_{\beta'_*}                     &   H^{m-k}(M_1\#M_2,E)^*.
  }
\end{xy}
\end{align*}
Let $[\w]\in H^k_{\textup{rel}}(M_1,E)$. Recall
\begin{align*}
&\#_{L^2_1}\circ \G=i^r(-1)^{\frac{k(k+1)}{2}}\#_{P_1} \ \textup{on} \ H^k_{\textup{rel}}(M_1,E), \\
&\#_{L^2}\circ \G=i^r(-1)^{\frac{k(k+1)}{2}}\#_{P} \ \textup{on} \ H^k(M_1\#M_2,E).
\end{align*}
The maps $\#_{P_1},\#_P$ are well-defined identifications on cohomology, due to Lemma \ref{harm-rep}. Let $[(\eta_1,\eta_2)] \in H^{m-k}(M_1\#M_2,E)$ and compute:
\begin{align*}
&(\beta'_*\circ \#_{L^2_1}\circ \G)[\w]([\eta_1,\eta_2])-(\#_{L^2}\circ \G \circ \A^*)[\w]([\eta_1,\eta_2])= \\
=& i^r(-1)^{\frac{k(k+1)}{2}} \left\{ \langle \beta'(\eta_1,\eta_2), \w \rangle_{P_1}- \langle (\eta_1,\eta_2), \A\w \rangle_P \right\}= \\
=& i^r(-1)^{\frac{k(k+1)}{2}} \left\{ \int_{M_1}h^E(\eta_1\wedge \w) - \int_{M_1}h^E(\eta_1\wedge \w) \right\}=0.
\end{align*}
Consider now the second type of squares in the diagramm \eqref{comm-diagramm}.
\begin{align*}
\begin{xy}
  \xymatrix{
      H^k(M_1\#M_2,E) \ar[r]^{\beta^*} \ar[d]^{\#_{L^2}\circ \G}   &  H^k_{\textup{abs}}(M_2,E)  \ar[d]^{\#_{L^2_2}\circ \G}   \\
      H^{m-k}(M_1\#M_2,E)^* \ar[r]^{\A'_*}                        & H^{m-k}_{\textup{rel}}(M_2,E)^* . 
  }
\end{xy}
\end{align*}
Let $[(\w_1,\w_2)]\in H^k(M_1\#M_2,E)$ and $[\eta]\in H^{m-k}_{\textup{rel}}(M_2,E)$. As before the maps in the diagramm are independent of particular choices of representatives, so we compute:
\begin{align*}
(\A'_*\circ \#_{L^2}\circ \G)[(\w_1,\w_2)]([\eta])-(\#_{L^2_2}\circ \G \circ \beta^*)[(\w_1,\w_2)]([\eta])= \\
= i^r(-1)^{\frac{k(k+1)}{2}} \left\{ \langle \A'\eta , (\w_1,\w_2)\rangle_P - \langle \eta,  \beta(\w_1,\w_2)\rangle_{P_2}\right\}= \\
= i^r(-1)^{\frac{k(k+1)}{2}} \left\{\int_{M_2}h^E(\eta\wedge \w_2) - \int_{M_2}h^E(\eta\wedge \w_2) \right\}=0.
\end{align*}
Consider finally the third type of squares. 
\begin{align}\label{third-type}
\begin{xy}
  \xymatrix{
      H^k_{\textup{abs}}(M_2,E)  \ar[r]^{\delta^*} \ar[d]^{\#_{L^2_2}\circ \G}  & H^{k+1}_{\textup{rel}}(M_1,E) \ar[d]^{\#_{L^2_1}\circ \G}  \\
      H^{m-k}_{\textup{rel}}(M_2,E)^* \ar[r]^{\delta'_*} &  H^{m-k-1}_{\textup{abs}}(M_1,E)^*. 
  }
\end{xy}
\end{align}
To prove commutativity of this diagramm, we need a precise understanding of the connecting homomorphisms $\delta^*, \delta'^*$. Note for this the following diagramm of short exact sequences of complexes: 

\begin{align}\label{ses-diagramm1}
\begin{array}{ccccccccc}
0 \!\!&\!\! \rightarrow \!\!&\!\! (\Omega^*_{\min}(M_1,E),\D_1) \!\!&\!\! \xrightarrow{\A} \!\!&\!\! (\Omega^*(M_1\#M_2,E),\D_S) \!\!&\!\! \xrightarrow{\beta} \!\!&\!\! (\Omega^*_{\max}(M_2,E),\D_2) \!\!&\!\! \rightarrow  \!\!&\!\! 0 \\ 
  \!\!&\!\!             \!\!&\!\!                       \!\!&\!\!                  \!\!&\!\!                     \!\!&\!\!                     \!\!&\!\!          \!\!&\!\!               \!\!&\!\! \\
  \!\!&\!\!             \!\!&\!\!  \|                         \!\!&\!\!                  \!\!&\!\!       \|                    \!\!&\!\!                     \!\!&\!\!    \beta_{\pi} \uparrow      \!\!&\!\!               \!\!&\!\! \\
    \!\!&\!\!             \!\!&\!\!                       \!\!&\!\!                  \!\!&\!\!                     \!\!&\!\!                     \!\!&\!\!          \!\!&\!\!               \!\!&\!\! \\
0 \!\!&\!\! \rightarrow \!\!&\!\! (\Omega^*_{\min}(M_1,E),\D_1) \!\!&\!\! \xrightarrow{\A} \!\!&\!\! (\Omega^*(M_1\#M_2,E),\D_S) \!\!&\!\! \xrightarrow{\pi}   \!\!&\!\! \overline{(\Omega^*_{\max}(M_2,E),\D_2)} \!\!&\!\! \rightarrow  \!\!&\!\! 0.
\end{array}
\end{align}
The complex $\overline{(\Omega^*_{\max}(M_2,E),\D_2)}$ in the lower short exact sequence is the natural quotient of complexes
$$\overline{(\Omega^*_{\max}(M_2,E),\D_2)}:= \frac{(\Omega^*(M_1\#M_2,E),\D_S)}{\A (\Omega^*_{\min}(M_1,E),\D_1)}.$$ The complex map $\pi$ is the natural projection. The map $\beta_{\pi}$ is an isomorphism of complexes:
\begin{align*}
\beta_{\pi}: \overline{(\Omega^*_{\max}(M_2,E),\D_2)} &\rightarrow (\Omega^*_{\max}(M_2,E),\D_2) \\
[(\w_1,\w_2)] & \mapsto \beta(\w_1,\w_2)=\w_2.
\end{align*}
The diagramm \eqref{ses-diagramm1} of short exact sequences of complexes obviously commutes. Hence the associated diagramm of long exact sequences on cohomology is also commutative and in particular we obtain the following commutative diagramm: 
\begin{align}\label{comm1}
\begin{xy}
  \xymatrix{
      H^k_{\textup{abs}}(M_2,E)  \ar[r]^{\delta^*}  & H^{k+1}_{\textup{rel}}(M_1,E) \ar@{=}[d]  \\
      H^{k}(\overline{(\Omega^*_{\max}(M_2,E),\D_2)}) \ar[r]^/.7em/{d^*} \ar[u]^{\beta_{\pi}^*} &  H^{k+1}_{\textup{rel}}(M_1,E). 
  }
\end{xy}
\end{align}
The vertical map $\beta_{\pi}^*$ is the isomorphism induced by $\beta_{\pi}$ and $\delta^*, d^*$ are the connecting homomorphisms of the long exact sequences associated to the lower and upper short exact sequence of complexes of \eqref{ses-diagramm1}, respectively.
\\[3mm] The connecting homorphism $d^*$ is easily defined. Let namely $[(\w_1,\w_2)]\in H^k(\overline{(\Omega^*_{\max}(M_2,E),\D_2)})$. Any of its representatives $(\w_1,\w_2)\in \Omega^k(M_1\#M_2,E)$ satisfies $\D_S(\w_1,\w_2)=(\D\w_1,0)\in \A(\Omega^*_{\min}(M_1,E),\D_1)$ by definition. Then  $$d^*[(\w_1,\w_2)]=[\D_1\w_1]\in H^{k+1}_{\textup{rel}}(M_1,E).$$ 
Consider now the next diagramm of short exact sequences of complexes: 
\begin{align}\label{ses-diagramm2}
\begin{array}{ccccccccc}
0 \!\!&\!\! \rightarrow \!\!&\!\! (\Omega^*_{\min}(M_2,E),\D_2) \!\!&\!\! \xrightarrow{\A'} \!\!&\!\! (\Omega^*(M_1\#M_2,E),\D_S) \!\!&\!\! \xrightarrow{\beta'} \!\!&\!\! (\Omega^*_{\max}(M_1,E),\D_1) \!\!&\!\! \rightarrow  \!\!&\!\! 0 \\ 
  \!\!&\!\!             \!\!&\!\!                       \!\!&\!\!                  \!\!&\!\!                     \!\!&\!\!                     \!\!&\!\!          \!\!&\!\!               \!\!&\!\! \\
  \!\!&\!\!             \!\!&\!\!  \|                         \!\!&\!\!                  \!\!&\!\!       \|                    \!\!&\!\!                     \!\!&\!\!    \beta'_{\pi} \uparrow      \!\!&\!\!               \!\!&\!\! \\
    \!\!&\!\!             \!\!&\!\!                       \!\!&\!\!                  \!\!&\!\!                     \!\!&\!\!                     \!\!&\!\!          \!\!&\!\!               \!\!&\!\! \\
0 \!\!&\!\! \rightarrow \!\!&\!\! (\Omega^*_{\min}(M_2,E),\D_2) \!\!&\!\! \xrightarrow{\A'} \!\!&\!\! (\Omega^*(M_1\#M_2,E),\D_S) \!\!&\!\! \xrightarrow{\pi'}   \!\!&\!\! \overline{(\Omega^*_{\max}(M_1,E),\D_1)} \!\!&\!\! \rightarrow  \!\!&\!\! 0.
\end{array}
\end{align}
The complex $\overline{(\Omega^*_{\max}(M_1,E),\D_1)}$ in the lower short exact sequence is the natural quotient of complexes
$$\overline{(\Omega^*_{\max}(M_1,E),\D_1)}:= \frac{(\Omega^*(M_1\#M_2,E),\D_S)}{\A (\Omega^*_{\min}(M_2,E),\D_2)}.$$ The complex map $\pi'$ is the natural projection. The map $\beta'_{\pi}$ is an isomorphism of complexes:
\begin{align*}
\beta'_{\pi}: \overline{(\Omega^*_{\max}(M_1,E),\D_1)} &\rightarrow (\Omega^*_{\max}(M_1,E),\D_1) \\
[(\w_1,\w_2)] & \mapsto \beta'(\w_1,\w_2)=\w_1.
\end{align*}
The diagramm \eqref{ses-diagramm2} of short exact sequences of complexes obviously commutes. Hence the associated diagramm of long exact sequences on cohomology is also commutative and in particular we obtain the following commutative diagramm: 
\begin{align}\label{comm2}
\begin{xy}
  \xymatrix{
      H^k_{\textup{abs}}(M_1,E)  \ar[r]^{\delta'^*}  & H^{k+1}_{\textup{rel}}(M_2,E) \ar@{=}[d]  \\
      H^{k}(\overline{(\Omega^*_{\max}(M_1,E),\D_1)}) \ar[r]^/.7em/{d'^*} \ar[u]^{\beta_{\pi}'^{*}} &  H^{k+1}_{\textup{rel}}(M_2,E). 
  }
\end{xy}
\end{align}
The vertical map $\beta_{\pi}'^*$ is the isomorphism induced by $\beta_{\pi}'$ and $\delta'^*, d'^*$ are the connecting homomorphisms of the long exact sequences associated to the lower and upper short exact sequence of complexes of \eqref{ses-diagramm2}, respectively.
\\[3mm] The connecting homomorphism $d'^*$ is easily defined. Let namely any $[(\eta_1,\eta_2)]\in H^{m-k-1}(\overline{(\Omega^*_{\max}(M_1,E),\D_1)})$. For any representative $(\eta_1,\eta_2)\in \Omega^{m-k-1}(M_1\#M_2,E)$ we have $$\D_S(\eta_1,\eta_2)=(0,\D_2\eta_2)\in \A'(\Omega^*_{\min}(M_2,E),\D_2)$$ by definition. We obtain for the connecting homomorphism $d'^*$ $$d'^*[(\eta_1,\eta_2)]=[\D_2\eta_2]\in H^{m-k}_{\textup{rel}}(M_2,E).$$
Now the pairings $\langle \cdot\, ,\cdot \rangle_{P_1}, \langle \cdot\, ,\cdot \rangle_{P_2}$, introduced in Section \ref{PD-boundary} induce via the isomorphisms on cohomology $\beta_{\pi}^*, \beta_{\pi}'^*$ the analogous pairings:
\begin{align*}
&\overline{\langle \cdot\, ,\cdot \rangle}_{P_2}:= \langle \cdot\, , \beta_{\pi}^*(\cdot) \rangle_{P_2}: H^{m-k}_{\textup{rel}}(M_2,E)\times H^{k}(\overline{(\Omega^*_{\max}(M_2,E),\D_2)}) \rightarrow \C, \\
&\overline{\langle \cdot\, ,\cdot \rangle}_{P_1}:= \langle \beta_{\pi}'^*(\cdot) \, , \cdot \rangle_{P_1}: H^{m-k-1}(\overline{(\Omega^*_{\max}(M_1,E),\D_1)})\times H^{k+1}_{\textup{rel}}(M_1,E) \rightarrow \C.
\end{align*}
These pairings induce the following identifications
\begin{align*}
\overline{\#}_{P_2}:H^{k}(\overline{(\Omega^*_{\max}(M_2,E),\D_2)})\widetilde{\rightarrow} \left(H^{m-k}_{\textup{rel}}(M_2,E)\right)^*, \\ \quad [\w] \ \longmapsto \overline{\langle \cdot \, , \ [\w]\rangle}_{P_2}\equiv \langle \cdot\, , \beta_{\pi}^*([\w]) \rangle_{P_2}, \\
\overline{\#}_{P_1}:H^{k+1}_{\textup{rel}}(M_1,E)\widetilde{\rightarrow} \left(H^{m-k-1}(\overline{(\Omega^*_{\max}(M_1,E),\D_1)})\right)^*, \\ \quad [\w] \ \longmapsto \overline{\langle \cdot \, , \ [\w]\rangle}_{P_1}\equiv \langle \beta_{\pi}'^*(\cdot) \, , [\w] \rangle_{P_1}.
\end{align*}
Due to commutativity of the previous two diagramms \eqref{comm1} and \eqref{comm2}, the commutativity of \eqref{third-type} is equivalent to commutativity of the following diagramm:
\begin{align}\label{third-type2}
\begin{xy}
  \xymatrix{
      H^k(\overline{(\Omega^*_{\max}(M_2,E),\D_2)})  \ar[r]^/.3em/{d^*} \ar[d]_{i^r(-1)^{\frac{k(k+1)}{2}}\overline{\#}_{P_2}}  & H^{k+1}_{\textup{rel}}(M_1,E) \ar[d]^{i^r(-1)^{\frac{(k+1)(k+2)}{2}}\overline{\#}_{P_1}}  \\
      H^{m-k}_{\textup{rel}}(M_2,E)^* \ar[r]^{d'_*} &  H^{m-k-1}(\overline{(\Omega^*_{\max}(M_1,E),\D_1)})^*. 
  }
\end{xy}
\end{align}
Using the explicit form of the connecting homomorphisms $d^*$ and $d'^*$ we finally compute for any $[(\w_1,\w_2)]\in H^k(\overline{(\Omega^*_{\max}(M_2,E),\D_2)})$ and $[(\eta_1,\eta_2)]\in H^{m-k-1}(\overline{(\Omega^*_{\max}(M_1,E),\D_1)})$:
\begin{align*}
\left(i^r(-1)^{\frac{(k+1)(k+2)}{2}}\overline{\#}_{P_1}\circ d^*\right)&[(\w_1,\w_2)]([(\eta_1,\eta_2)])-  \\ \left(i^r(-1)^{\frac{k(k+1)}{2}}d'_*\circ \overline{\#}_{P_2}\right)&[(\w_1,\w_2)]([(\eta_1,\eta_2)])= \\
=&i^r(-1)^{\frac{(k+1)(k+2)}{2}} \langle \beta_{\pi}'^*[(\eta_1,\eta_2)],d^*[(\w_1,\w_2)]\rangle_{P_1} - \\
&i^r(-1)^{\frac{k(k+1)}{2}} \langle d'^*[(\eta_1,\eta_2)],\beta_{\pi}^*[(\w_1,\w_2)]\rangle_{P_2}= \\
= &i^r(-1)^{\frac{(k+1)(k+2)}{2}} \int_{M_1}h^E(\eta_1\wedge \D_1\w_1)  - \\ - &i^r(-1)^{\frac{k(k+1)}{2}} \int_{M_2}h^ E(\D_2\eta_2\wedge \w_2)=:A.
\end{align*}
Now we apply the following formula for $j=1,2$:
$$dh^E(\eta_j\wedge \w_j)=h^E(\D_j\eta_j\wedge \w_j)+(-1)^{m-k-1}h^E(\eta_j\wedge \D_j\w_j).$$
Since $\D_1\eta_1=0$ and $\D_2\w_2=0$ we find 
\begin{align*}
A=i^r(-1)^{\frac{(k+1)(k+2)}{2}} \int_{M_1}(-1)^{m-k-1}dh^E(\eta_1\wedge \w_1)  - \\ i^r(-1)^{\frac{k(k+1)}{2}} \int_{M_2}dh^ E(\eta_2\wedge \w_2)=\\
i^r(-1)^{\frac{(k+1)(k+2)}{2}} (-1)^{m-k-1} \int_{\partial M_1}\iota_1^*h^E(\eta_1\wedge \w_1)  - \\ i^r(-1)^{\frac{k(k+1)}{2}} \int_{\partial M_2}\iota_2^*h^ E(\eta_2\wedge \w_2).
\end{align*}
Note $(-1)^{m-k-1}=(-1)^{-k}$ since $m$ is odd. Further $$\frac{(k+1)(k+2)}{2}-k=\frac{k(k+1)}{2}.$$ Hence we compute further
\begin{align}\label{calc1}
A=i^r(-1)^{\frac{k(k+1)}{2}+1}\left[\int_{\partial M_1}\iota_1^*h^E(\eta_1\wedge \w_1) + \int_{\partial M_2}\iota_2^*h^ E(\eta_2\wedge \w_2)\right].
\end{align}
Since $\iota_1^*\w_1=\iota_2^*\w_2$ and $\iota_1^*\eta_1=\iota_2^*\eta_2$ by construction, we find $$\iota_1^*h^E(\eta_1\wedge \w_1)= \iota_2^*h^ E(\eta_2\wedge \w_2).$$ However the orientations on $N=\partial M_1 = \partial M_2$ induced from $M_1$ and $M_2$ are opposite, thus the two integrals in \eqref{calc1} cancel. This shows commutativity of \eqref{third-type2} and completes the proof of the theorem.
\end{proof}

\section{Canonical Isomorphisms associated to Long Exact Sequences}\label{canonical} 
We first introduce some concepts and notations on finite-dimensional vector spaces. Let $V$ be an finite-dimensional complex vector space. Given a basis $\{v\}:=\{v_1,..,v_n\}, n=\dim V$, denote the induced element of the determinant line $\det V$ as follows
$$[v]:=v_1\wedge ..\wedge v_n\in \det V.$$
Given any two bases $\{v\}:=\{v_1,..,v_n\}$ and $\{w\}:=\{w_1,..,w_n\}$ of $V$, we have the corresponding coordinate change matrix $$v_i=\sum\limits_{j=1}^nl_{ij}w_j, \quad L:=(l_{ij}).$$ We put $$[v/w]:=\det L \in \C,$$ and obtain the following relation 
\begin{align}\label{coord-change}
[v]=[v/w][w].
\end{align}
In general the determinant is a complex number (we don't take the mode), but later it will be convenient to have a relation between bases such that the determinant of the coordinate change matrix is real-valued and positive. We will use the result of the following lemma.
\begin{lemma}\label{real-determinant}
Let $V$ be a complex finite-dimensional Hilbert space and $\{v\}$ any fixed basis, not necessarily orthogonal. Let $V=W\oplus W^{\perp}$ be an orthogonal decomposition into Hilbert subspaces. Then there exist orthonormal bases $\{w\}\equiv \{w_1,..w_{\dim W}\}, \{u\}\equiv \{u_1,..u_{\dim W^{\perp}}\}$ of $W,W^{\perp}$ respectively, such that the determinant of the coordinate change matrix between $\{w,u\}$ and $\{v\}$ is positive, i.e. $$[w,u/v]\in \R^+.$$
\end{lemma}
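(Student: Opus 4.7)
My plan is to exploit the ambiguity in choosing orthonormal bases: the phase of the determinant of the coordinate change matrix can be freely adjusted by multiplying a single basis vector by a unit complex number, while orthonormality is preserved.

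First, I would fix arbitrary orthonormal bases $\{w'_1,\dots,w'_p\}$ of $W$ and $\{u'_1,\dots,u'_q\}$ of $W^\perp$, where $p=\dim W$ and $q=\dim W^\perp$. Since $V=W\oplus W^\perp$ is an orthogonal decomposition, the concatenation $\{w'_1,\dots,w'_p,u'_1,\dots,u'_q\}$ is an orthonormal basis of $V$. Next, I would compute $c:=[w',u'/v]\in\C$. Because both $\{v\}$ and $\{w',u'\}$ are bases of $V$, the coordinate change matrix is invertible, so $c\neq 0$. Write $c=|c|e^{i\theta}$ for some $\theta\in\R$.

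The key step is then to rescale a single basis vector. Set $w_1:=e^{-i\theta}w'_1$, and let $w_k:=w'_k$ for $2\le k\le p$ and $u_l:=u'_l$ for $1\le l\le q$. Because $|e^{-i\theta}|=1$, the new bases $\{w\}$ of $W$ and $\{u\}$ of $W^\perp$ remain orthonormal. The coordinate change matrix expressing $\{w,u\}$ in terms of $\{v\}$ differs from the old one only by multiplication of the first row by the scalar $e^{-i\theta}$, so the new determinant equals
\begin{align*}
[w,u/v]=e^{-i\theta}[w',u'/v]=e^{-i\theta}\cdot |c|e^{i\theta}=|c|\in\R^+,
\end{align*}
which is the desired conclusion.

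There is no serious obstacle; the only point requiring a moment of care is the bookkeeping of the phase factor under the rescaling $w'_1\mapsto e^{-i\theta}w'_1$, and the observation that the rescaling leaves the orthonormality of both $\{w\}$ and $\{u\}$ intact while altering the determinant by exactly the factor needed to cancel the phase of $c$.
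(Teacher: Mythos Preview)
Your proof is correct and follows essentially the same approach as the paper: start with arbitrary orthonormal bases of $W$ and $W^\perp$, then multiply by a unit complex scalar to cancel the phase of the coordinate-change determinant. The only cosmetic difference is that the paper spreads the correction evenly, rescaling every basis vector by $\exp(-i\phi/\dim V)$, whereas you concentrate the entire phase on the single vector $w'_1$ (which tacitly assumes $\dim W\geq 1$; if $W=\{0\}$ one rescales $u'_1$ instead).
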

\begin{proof} Consider any orthonormal bases $\{w\}$ and $\{u\}$ of $W$ and $W^{\perp}$, respectively. This gives us two bases $\{v\}$ and $\{w,u\}$ of $V$. Denote the corresponding coordinate change matrix by $L$. We have $$[w,u/v]=\det L=e^{i\phi}|\det L|,$$ for some $\phi \in [0,2\pi)$. We replace $\{w\}$ and $\{u\}$ by  new bases 
\begin{align*}
\{w^v\}\equiv \{w^v_1,..,w^v_{\dim W}\}, \ w_i^v:=w_i\cdot \exp \left(\frac{-i\phi}{\dim V}\right), \\
\{u^v\}\equiv \{u^v_1,..,u^v_{\dim W^{\perp}}\}, \ u_i^v:=u_i\cdot \exp \left(\frac{-i\phi}{\dim V}\right).
\end{align*}
Note that $\{w^v\}$ and $\{u^v\}$ are still orthonormal bases of complex Hilbert spaces $W$ and $W^{\perp}$, respectively. By construction $[w^v,u^v/w,u]=\exp (-i\phi)$ and $$[w^v,u^v/v]=[w^v,u^v/w,u][w,u/v]=e^{-i\phi}\cdot e^{i\phi}|\det L|=|\det L|\in \R^+.$$
Thus $\{w^v,u^v\}$ indeed provides the desired example of an orthonormal basis of $V$, respecting the given orthogonal decomposition, with positive determinant of the coordinate change $[w^v,u^v/v]$ relative to any given basis $\{v\}$.
\end{proof} \ \\
\\[-7mm] The decomposition $V=W\oplus W^{\perp}$ in the lemma above is of course not essential for the statement itself. However we presented the result precisely in the form how it will be applied later. We will also need the following purely algebraic result:
\begin{prop}\label{dual-map}
Let $V$ and $W$ be two finite-dimensional Hilbert spaces with some orthonormal bases $\{v\}$ and $\{w\}$ respectively. Let $f:V\to W$ be an isomorphism of vector spaces. Then $\{f(v)\}$ is also a basis of $W$, not necessarily orthonormal. As Hilbert spaces $V$ and $W$ are canonically identified with their duals $V^*$ and $W^*$. Then $\{v^*\}, \{w^*\}$ are bases of $V^*,W^*$ respectively and $\{f^*(w^*)\}$ is another basis of $V^*$. Under this setup the following relation holds $$[f(v)/w]=[f^*(w^*)/v^*].$$
\end{prop}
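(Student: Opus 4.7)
The plan is a direct matrix computation, using the definition of the coordinate change bracket in \eqref{coord-change} together with the defining relation of the dual map $f^*\colon W^*\to V^*$.

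First I would fix the matrix $M=(M_{ij})$ of $f$ with respect to the bases $\{v\}$ and $\{w\}$, defined by $f(v_j)=\sum_{i} M_{ij} w_i$. According to the convention in \eqref{coord-change} — where the new basis is expanded in the old one and the determinant of the resulting matrix is recorded — the coordinate change matrix from $\{f(v)\}$ to $\{w\}$ is the transpose of $M$. Hence
$$[f(v)/w]=\det M^{T}=\det M.$$

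Second, I would compute the action of $f^*$ on the dual basis $\{w^*\}$ using the defining identity $(f^*\phi)(v)=\phi(f(v))$. A one-line calculation gives
$$f^*(w^*_k)(v_j)=w^*_k(f(v_j))=M_{kj},$$
so that $f^*(w^*_k)=\sum_{j} M_{kj}\,v^*_j$. Reading off the coordinate change matrix from $\{f^*(w^*)\}$ to $\{v^*\}$, this matrix is exactly $M$, whence
$$[f^*(w^*)/v^*]=\det M.$$
Comparing the two equalities finishes the argument.

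The statement is essentially the classical fact that, in dual bases, the dual map is represented by the transpose and that $\det M=\det M^T$; no real obstacle arises beyond careful bookkeeping with the index convention of \eqref{coord-change}. In particular, the orthonormality and Hilbert-space structure are not strictly used in the argument itself — they only serve to make the dual bases canonically available through the Riesz identification (for orthonormal $\{v_i\}$, the Riesz map sends $v_i$ to $\langle\,\cdot\,,v_i\rangle=v^*_i$), so that the proposition is stated in a form directly applicable in the sequel.
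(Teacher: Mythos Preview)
Your argument is correct. It is, however, genuinely different from the paper's proof. You work at the level of matrix entries: you write down the matrix $M$ of $f$ in the given bases, observe that the dual map $f^*$ is represented by $M^T$ (equivalently, by $M$ after the index swap forced by the bracket convention \eqref{coord-change}), and conclude by $\det M=\det M^T$. The paper instead stays on the determinant lines: it uses orthonormality to get $\langle [v],[v]\rangle_{\det V}=\langle [w],[w]\rangle_{\det W}=1$, identifies $[v^*]$ with the functional $\langle [v],\cdot\rangle_{\det V}$ under $\det V^*\cong(\det V)^*$, and then evaluates $[f^*(w^*)]$ on $[v]$ directly as $\langle [w],[f(v)]\rangle_{\det W}=[f(v)/w]$. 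Your route is more elementary and, as you note, does not actually need the Hilbert structure beyond having dual bases at hand; the paper's route is more intrinsic to the determinant-line formalism used in the surrounding sections and makes explicit use of the orthonormality hypothesis. Either way the content is the same classical fact that transposition preserves determinants.
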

\begin{proof}
Denote the scalar products on the Hilbert spaces $V$ and $W$ by $\langle\cdot \, , \cdot \rangle_V$ and $\langle \cdot \, , \cdot \rangle_W$, respectively. Let the scalar products be linear in the second component. They induce scalar products on $\det V$ and $\det W$, denoted by $\langle\cdot \, , \cdot \rangle_{\det V}$ and $\langle \cdot \, , \cdot \rangle_{\det W}$ respectively. Since the bases $\{v\},\{w\}$ are orthonormal, we obtain for the elements $[v], [w]$ of the determinant lines $\det V, \det W$ $$\langle[v] , [v] \rangle_{\det V}=\langle [w] , [w] \rangle_{\det W}=1.$$
The dual bases $\{v^*\},\{w^*\}$ induce elements on the determinant lines $\det V^*\cong (\det V)^*,$ and $\det W^*\cong (\det W)^*$ and under these identifications we have
\begin{align*}
[v^*]=[v]^*=\langle [v], \cdot \rangle_{\det V}, \\
[w^*]=[w]^*=\langle [w], \cdot \rangle_{\det W}.
\end{align*}
Now we compute
\begin{align*}
[f^*(w^*)]([v])=\langle [w], [f(v)]\rangle_{\det W}=[f(v)/w]\langle [w],[w]\rangle_{\det W}= \\ =[f(v)/w]\cdot 1=[f(v)/w]\cdot [v^*]([v]), \\
\Rightarrow [f^*(w^*)]=[f(v)/w][v^*].
\end{align*}
This implies the statement of the proposition.
\end{proof} \ \\
\\[-7mm] Next we consider the long exact sequences \eqref{LES-H}, introduced in Section \ref{cohom-algebra}. 
\begin{align*}&\mathcal{H}: \ 
...H^k_{\textup{rel}}(M_1,E)\xrightarrow{\A^*_k}H^k(M_1\#M_2,E)\xrightarrow{\beta^*_k}H^k_{\textup{abs}}(M_2,E)\xrightarrow{\delta^*_k}H^{k+1}_{\textup{rel}}(M_1,E)... \\ &\mathcal{H'}\!: ..H^k_{\textup{rel}}(M_2,E)\xrightarrow{\A'^*_k}H^k(M_1\#M_2,E)\xrightarrow{\beta'^*_k}H^k_{\textup{abs}}(M_1,E)\xrightarrow{\delta'^*_k}H^{k+1}_{\textup{rel}}(M_2,E)...
\end{align*}
The long exact sequences induce isomorphisms on determinant lines (cf. [Nic]) in a canonical way
\begin{align}
\Phi: \det  H^*_{\textup{rel}}(M_1,E) \otimes \det H^*_{\textup{abs}}(M_2,E) \otimes \left[\det H^*(M_1\#M_2,E)\right]^{-1}\rightarrow \C, \\
\Phi': \det  H^*_{\textup{rel}}(M_2,E) \otimes \det H^*_{\textup{abs}}(M_1,E) \otimes \left[\det H^*(M_1\#M_2,E)\right]^{-1}\rightarrow \C.
\end{align}
More precisely , the action of the isomorphisms $\Phi,\Phi'$ is explicitly given as follows. Fix any bases $\{\widetilde{a}_k\}$, $\{\widetilde{b}_k\}$ and $\{\widetilde{c}_k\}$ of $\textup{Im} \delta^*_{k-1}$, $\textup{Im} \A^*_{k}$ and $\textup{Im} \beta^*_{k}$ respectively. Here the lower index $k$ indexes the entire basis and is not a counting of the elements in the set. Choose now any linearly independent elements $\{a_k\}$,  $\{b_k\}$ and $\{c_k\}$ such that $\{\widetilde{a}_k\}=\delta_{k-1}^*(c_{k-1})$, $\{\widetilde{b}_k\}=\A^*_k (a_k)$ and $\{\widetilde{c}_k\}=\beta_{k}^*(b_k)$.
\\[3mm] We make the same choices on the long exact sequence $\mathcal{H'}$. The notation is the same up to an additional apostroph. Since the sequences $\mathcal{H}, \mathcal{H'}$ are exact, the choices above provide us with bases of the cohomology spaces.
\\[3mm] Under the Knudson-Mumford sign convention [KM] we define the action of the isomorphisms $\Phi$ and $\Phi'$ as follows:
\begin{align}
\Phi \left\{\left(\bigotimes\limits_{k=0}^m[a_k,\widetilde{a}_k]^{(-1)^k}\right)\otimes \left(\bigotimes\limits_{k=0}^m[c_k,\widetilde{c}_k]^{(-1)^k}\right) \otimes \left(\bigotimes\limits_{k=0}^m[b_k,\widetilde{b}_k]^{(-1)^{k+1}}\right) \right\}\mapsto (-1)^{\nu}, \label{action1} \\
\Phi' \left\{\left(\bigotimes\limits_{k=0}^m[a'_k,\widetilde{a}'_k]^{(-1)^k}\right)\otimes \left(\bigotimes\limits_{k=0}^m[c'_k,\widetilde{c}'_k]^{(-1)^k}\right) \otimes \left(\bigotimes\limits_{k=0}^m[b'_k,\widetilde{b}'_k]^{(-1)^{k+1}}\right) \right\}\mapsto (-1)^{\nu'}.\label{action2}
\end{align}
The definition turns out to be independent of choices. The numbers $\nu,\nu'$ count the pairwise reorderings in the definition of the isomorphisms. They are given explicitly by the following formula:
\begin{align}\nonumber
\nu = \frac{1}{2}\sum\limits_{k=0}^m
\left(\dim \textup{Im}\A^*_k\cdot (\dim \textup{Im}\A^*_k +(-1)^k)\right)+ \\ \nonumber
\frac{1}{2}\sum\limits_{k=0}^m
\left(\dim \textup{Im}\beta^*_k\cdot (\dim \textup{Im}\beta^*_k +(-1)^k)\right)+ \\ \nonumber
\frac{1}{2}\sum\limits_{k=0}^m
\left(\dim \textup{Im}\delta^*_k\cdot (\dim \textup{Im}\delta^*_k +(-1)^k)\right)+ \\ \nonumber
\sum\limits_{k=0}^m\left(\dim H^k_{\textup{rel}}(M_1,E)\cdot \sum\limits_{i=0}^{k-1}\dim H^i(M_1\#M_2,E) \right) + \\ \nonumber
\sum\limits_{k=0}^m\left(\dim H^k_{\textup{rel}}(M_1,E)\cdot \sum\limits_{i=0}^{k-1}\dim H^i_{\textup{abs}}(M_2,E) \right) + \\ \label{v}
\sum\limits_{k=0}^m\left(\dim H^i_{\textup{abs}}(M_2,E)\cdot \sum\limits_{i=0}^{k-1}\dim H^i_{\textup{abs}}(M_2,E) \right).
\end{align}
The first three lines in the formula are standard terms for "cancellations" of images and cokernels of the homomorphisms in an acyclic sequence of vector spaces. The last three lines are due to reordering of the cohomology groups into determinant lines. The number $\nu'$ is given by an analogous formula as $\nu$. As a consequence of Theorem \ref{comm-diagramm2} which relates both sequences $\mathcal{H}$ and $\mathcal{H'}$ we have $$\nu =\nu'.$$
Let the cohomology spaces in the long exact sequences $\mathcal{H}$ and $\mathcal{H'}$ be endowed with Hilbert structures naturally induced by the $L^2-$scalar products on harmonic elements. We have an orthogonal decomposition of each cohomology space in the long exact sequences:
\begin{align}
&H^k_{\textup{rel}}(M_1,E)=\textup{Im}\delta^*_{k-1}\oplus (\textup{Im}\delta^*_{k-1})^{\perp}, \nonumber \\ 
&H^k(M_1\#M_2,E)= \textup{Im}\A^*_{k}\oplus (\textup{Im}\A^*_{k})^{\perp}, \nonumber \\
&H^k_{\textup{abs}}(M_2,E)= \textup{Im}\beta^*_{k}\oplus (\textup{Im}\beta^*_{k})^{\perp}, \nonumber \\
& \qquad \qquad \qquad H^k_{\textup{rel}}(M_2,E)= \textup{Im}\delta'^*_{k-1}\oplus (\textup{Im}\delta'^*_{k-1})^{\perp}, \nonumber \\
& \qquad \qquad \qquad H^k(M_1\#M_2,E)= \textup{Im}\A'^*_{k}\oplus (\textup{Im}\A'^*_{k})^{\perp}, \nonumber \\
& \qquad \qquad \qquad H^k_{\textup{abs}}(M_1,E)= \textup{Im}\beta'^*_{k}\oplus (\textup{Im}\beta'^*_{k})^{\perp}. \nonumber \\
\ \label{orth-dec}
\end{align}
We can assume the bases $\{a_k,\widetilde{a}_k\}, \{b_k,\widetilde{b}_k\}, \{c_k,\widetilde{c}_k\}$ on $\mathcal{H}$ as well as the corresponding bases on $\mathcal{H'}$ to respect the orthogonal decomposition above, i.e. with respect to the orthogonal decompositions in \eqref{orth-dec} we have
\begin{align*}
&H^k_{\textup{rel}}(M_1,E)=\langle \{\widetilde{a}_k\}\rangle \oplus \langle \{ a_k\}\rangle, \\
&H^k(M_1\#M_2,E)=\langle \{\widetilde{b}_k\}\rangle \oplus \langle \{ b_k\}\rangle,\\
&H^k_{\textup{abs}}(M_2,E)= \langle \{\widetilde{c}_k\}\rangle \oplus \langle \{c_k\}\rangle, \\
& \qquad \qquad \qquad H^k_{\textup{rel}}(M_2,E)=\langle \{\widetilde{a}'_k\}\rangle \oplus \langle \{ a'_k\}\rangle, \\
& \qquad \qquad \qquad H^k(M_1\#M_2,E)=\langle \{\widetilde{b}'_k\}\rangle \oplus \langle \{ b'_k\}\rangle,\\
& \qquad \qquad \qquad H^k_{\textup{abs}}(M_1,E)= \langle \{\widetilde{c}'_k\}\rangle \oplus \langle \{c'_k\}\rangle.
\end{align*}
By Lemma \ref{real-determinant} we can choose for any $k=0,..,\dim M$ orthonormal bases of $H^k_{\textup{rel}}(M_1,E), H^k(M_1\#M_2,E), H^k_{\textup{abs}}(M_2,E)$ with respect to orthogonal decomposition \eqref{orth-dec}
\begin{align}
&H^k_{\textup{rel}}(M_1,E)=\langle \{\widetilde{v}_k\}\rangle \oplus \langle \{v_k\}\rangle,  \nonumber \\
&H^k(M_1\#M_2,E)=\langle \{\widetilde{w}_k\}\rangle \oplus \langle \{ w_k\}\rangle, \nonumber \\
&H^k_{\textup{abs}}(M_2,E)= \langle \{\widetilde{u}_k\}\rangle \oplus \langle \{u_k\}\rangle, \nonumber \\ \ \label{basis-choice}
\end{align} 
such that 
\begin{align}\label{pos-det}
[v_k, \widetilde{v}_k/a_k, \widetilde{a}_k], \ [u_k, \widetilde{u}_k /c_k, \widetilde{c}_k], \ [w_k, \widetilde{w}_k/b_k, \widetilde{b}_k] \ \in \R^+.
\end{align}
These bases induce bases of the cohomology spaces of the sequence $\mathcal{H'}$ by the action of the Poincare duality map $\G$. Since the map is an isometry, the induced bases are still orthonormal. Furthermore commutativity of the diagramm \eqref{comm-diagramm}, established in Theorem \ref{comm-diagramm2} implies that the induced bases still respect the orthogonal decomposition \eqref{orth-dec} of the cohomology spaces.   
\begin{align*}
& \qquad \qquad \qquad H^{m-k}_{\textup{rel}}(M_2,E)=\langle \{\G u_k\}\rangle \oplus \langle \{\G \widetilde{u}_k\}\rangle, \\
& \qquad \qquad \qquad H^{m-k}(M_1\#M_2,E)=\langle \{\G w_k\}\rangle \oplus \langle \{ \G \widetilde{w}_k\}\rangle,\\
& \qquad \qquad \qquad H^{m-k}_{\textup{abs}}(M_1,E)= \langle \{\G v_k\}\rangle \oplus \langle \{\G \widetilde{v}_k\}\rangle.
\end{align*}
We obtain for the action of the canonical isomorphisms on the elements induced by these orthonormal bases the following central result, which relates the action of the isomorphisms to the combinatorial torsion of the long exact sequences. 
\begin{thm}\label{phi-tau}
\begin{align*}
&\Phi \left\{\left(\bigotimes\limits_{k=0}^m[v_k,\widetilde{v}_k]^{(-1)^k}\right)\otimes \left(\bigotimes\limits_{k=0}^m[u_k,\widetilde{u}_k]^{(-1)^k}\right) \otimes \left(\bigotimes\limits_{k=0}^m[w_k,\widetilde{w}_k]^{(-1)^{k+1}}\right) \right\}= \\
&\Phi' \left\{\left(\bigotimes\limits_{k=0}^m[\G \widetilde{v}_k, \G v_k]^{(-1)^{m-k}}\right)\otimes \left(\bigotimes\limits_{k=0}^m[\G \widetilde{u}_k, \G u_k]^{(-1)^{m-k}}\right) \otimes \right.\\ &\qquad \qquad \qquad \left. \otimes \left(\bigotimes\limits_{k=0}^m[\G \widetilde{w}_k, \G w_k]^{(-1)^{m-k+1}}\right) \right\}=  (-1)^{\nu}\cdot \tau(\mathcal{H})=(-1)^{\nu}\tau (\mathcal{H'}).
\end{align*}
\end{thm}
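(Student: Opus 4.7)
The strategy is to reduce the identity to its definition \eqref{action1}--\eqref{action2} by expressing the elements built from the orthonormal bases $\{v_k,\widetilde{v}_k\}$, $\{u_k,\widetilde{u}_k\}$, $\{w_k,\widetilde{w}_k\}$ in terms of the (non-orthonormal) bases $\{a_k,\widetilde{a}_k\}$, $\{c_k,\widetilde{c}_k\}$, $\{b_k,\widetilde{b}_k\}$ adapted to the long exact sequence $\mathcal{H}$. Applying the coordinate change rule \eqref{coord-change} in each degree,
\[
[v_k,\widetilde{v}_k] = [v_k,\widetilde{v}_k/a_k,\widetilde{a}_k]\cdot [a_k,\widetilde{a}_k],
\]
and analogously for the $u$- and $w$-families, the multilinearity of $\Phi$ and the definition \eqref{action1} yield
\[
\Phi\!\left\{\bigotimes_{k=0}^m[v_k,\widetilde{v}_k]^{(-1)^k}\otimes \cdots \right\}
= (-1)^\nu \prod_{k=0}^{m} \bigl( [v_k,\widetilde{v}_k/a_k,\widetilde{a}_k]\cdot [u_k,\widetilde{u}_k/c_k,\widetilde{c}_k]\cdot [w_k,\widetilde{w}_k/b_k,\widetilde{b}_k]^{-1}\bigr)^{(-1)^k}.
\]
By the standard definition of the combinatorial torsion of the acyclic complex $\mathcal{H}$ (with the Hilbert-space structure coming from the $L^2$-scalar products on harmonic forms), the alternating product on the right equals $\tau(\mathcal{H})$; positivity \eqref{pos-det} ensures that no extraneous sign or phase arises in identifying the two conventions.

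The second equality is handled by transporting everything through the Poincar\'e duality $\G$. Since $\G$ is an isometry, each image of an orthonormal basis remains orthonormal. The crucial input is the commutativity of the diagram \eqref{comm-diagramm} established in Theorem \ref{comm-diagramm2}: under the $L^2$-identifications $\#_{L^2_j},\#_{L^2}$ it identifies the subspaces $\textup{Im}\,\delta^*,\textup{Im}\,\A^*,\textup{Im}\,\beta^*$ of $\mathcal{H}$ with the orthogonal complements of the corresponding image subspaces in $\mathcal{H}'$. This is exactly what forces the swap between $\widetilde{v}_k$ and $v_k$ in the statement: the ``image'' part on one side becomes the ``complement'' on the other. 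Proposition \ref{dual-map} converts the resulting coordinate change determinants under $\G$ into coordinate change determinants in the dual bases, so the computation of $\Phi'$ reproduces the same product of coordinate change scalars, and the already noted identity $\nu=\nu'$ makes the overall signs agree.

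Finally, to close the loop, one shows $\tau(\mathcal{H})=\tau(\mathcal{H}')$ as an additional consequence of the commutative diagram: the alternating products defining the two torsions are built from the same coordinate change determinants up to the dualization guaranteed by Theorem \ref{comm-diagramm2} and Proposition \ref{dual-map}. Thus both $\Phi$ and $\Phi'$, applied to the stated elements, produce $(-1)^\nu\tau(\mathcal{H})=(-1)^{\nu'}\tau(\mathcal{H}')$, which is the content of the theorem.

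\textbf{Main obstacle.} The genuinely delicate part is bookkeeping: carefully matching the Knudson--Mumford sign convention with the degree-by-degree coordinate changes, verifying that the six orthogonal decompositions in \eqref{orth-dec} are preserved by $\G$ with the correct swap of image and complement (so that the ordering $[\G \widetilde{v}_k, \G v_k]$ rather than $[\G v_k, \G \widetilde{v}_k]$ is the one produced by $\mathcal{H}'$), and checking that $\nu$ and $\nu'$ as defined by \eqref{v} really coincide under this swap. Once these accounting issues are settled, the identification with combinatorial torsion is essentially tautological.
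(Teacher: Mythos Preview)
Your proposal is correct and follows essentially the same route as the paper's proof: coordinate change via \eqref{coord-change} together with positivity \eqref{pos-det} gives the first equality $\Phi(\cdots)=(-1)^\nu\tau(\mathcal{H})$; then the commutative diagram \eqref{comm-diagramm} combined with Proposition~\ref{dual-map} identifies the coordinate-change scalars appearing in the $\Phi'$-computation with those in the $\Phi$-computation, whence $\Phi'(\cdots)=\Phi(\cdots)$ and $\tau(\mathcal{H})=\tau(\mathcal{H}')$. The paper carries out precisely these steps, with the minor organizational difference that it first rewrites both $\Phi$ and $\Phi'$ in the form $\prod_k[\A^*_k(v_k)/\widetilde{w}_k]^{(-1)^k}\cdots$ before invoking the diagram, and then infers $\tau(\mathcal{H})=\tau(\mathcal{H}')$ a posteriori from the positivity of both torsions rather than arguing it directly.
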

\begin{remark}
The statement of the theorem corresponds to the fact that the combinatorial torsions $\tau(\mathcal{H}), \tau(\mathcal{H'})$ are defined as modes of the complex numbers obtained by the action of the isomorphisms $\Phi,\Phi'$ on the volume elements, induced by the Hilbert structures.
\\[3mm] However the value of the theorem for our purposes is firstly the equality $\tau(\mathcal{H})=\tau (\mathcal{H'})$ and most importantly the fact that it provides explicit volume elements on the determinant lines, which are mapped to the real-valued positive combinatorial torsions without additional undetermined complex factors of the form $e^{i\phi}$.
\end{remark} \ \\
\\[-7mm] \emph{Proof of Theorem \ref{phi-tau}.}
Consider first the action of the canonical isomorphism $\Phi$. By the action \eqref{action1} we obtain
\begin{align} \nonumber
\Phi \left\{\left(\bigotimes\limits_{k=0}^m[v_k,\widetilde{v}_k]^{(-1)^k}\right)\otimes \left(\bigotimes\limits_{k=0}^m[u_k,\widetilde{u}_k]^{(-1)^k}\right) \otimes \left(\bigotimes\limits_{k=0}^m[w_k,\widetilde{w}_k]^{(-1)^{k+1}}\right) \right\}= \\ \nonumber
(-1)^{\nu}\prod_{k=0}^m [v_k,\widetilde{v}_k/ a_k,\widetilde{a}_k]^{(-1)^k}\cdot[u_k,\widetilde{u}_k/ c_k,\widetilde{c}_k]^{(-1)^k}\cdot[w_k,\widetilde{w}_k/ b_k,\widetilde{b}_k]^{(-1)^{k+1}}=\\ \label{tau} =(-1)^{\nu}\tau (\mathcal{H}),
\end{align}
where the second equality follows from the definition of combinatorial torsion and the particular choice of bases such that \eqref{pos-det} holds. On the other hand we can rewrite the action of $\Phi$ as follows:
\begin{align}
\Phi \left\{\left(\bigotimes\limits_{k=0}^m[v_k,\widetilde{v}_k]^{(-1)^k}\right)\otimes \left(\bigotimes\limits_{k=0}^m[u_k,\widetilde{u}_k]^{(-1)^k}\right) \otimes \left(\bigotimes\limits_{k=0}^m[w_k,\widetilde{w}_k]^{(-1)^{k+1}}\right) \right\}= \nonumber \\
(-1)^{\nu}\prod\limits_{k=0}^m[v_k,\widetilde{v}_k/ a_k,\widetilde{a}_k]^{(-1)^k}\cdot[u_k,\widetilde{u}_k/ c_k,\widetilde{c}_k]^{(-1)^k}\cdot[w_k,\widetilde{w}_k/ b_k,\widetilde{b}_k]^{(-1)^{k+1}}= \nonumber \\
(-1)^{\nu}\prod\limits_{k=0}^m [v_k/ a_k]^{(-1)^k}[\widetilde{v}_k/\widetilde{a}_k]^{(-1)^k}\cdot[u_k/ c_k]^{(-1)^k}\cdot[\widetilde{u}_k/ \widetilde{c}_k]^{(-1)^k} \nonumber \\ \cdot [w_k/ b_k]^{(-1)^{k+1}}\cdot[\widetilde{w}_k / \widetilde{b}_k]^{(-1)^{k+1}}. \label{phi-action}
\end{align}
Observe now the following useful relations:
\begin{align*}
[\A^*_k(v_k)]=[\A^*_k(v_k)&/\A^*_k(a_k)][\A^*_k(a_k)]=[v_k/a_k][\widetilde{b}_k]=\frac{[v_k /a_k]}{[\widetilde{w}_k/\widetilde{b}_k]}\cdot [\widetilde{w}_k] \\
\textup{and hence } \  \, &[\A^*_k(v_k)/\widetilde{w}_k]=\frac{[v_k /a_k]}{[\widetilde{w}_k/\widetilde{b}_k]}, \\
&[\beta^*_k(w_k)/\widetilde{u}_k]=\frac{[w_k/b_k]}{[\widetilde{u}_k / \widetilde{c}_k]}, \\
&[\delta^*_k(u_k)/ \widetilde{v}_{k+1}]=\frac{[u_k/c_k]}{[\widetilde{v}_{k+1}/\widetilde{a}_{k+1}]},
\end{align*}
where the last two identities are derived in the similar manner as the first one. With these relations we can rewrite the action \eqref{phi-action} of $\Phi$ as follows:
\begin{align}
\Phi \left\{\left(\bigotimes\limits_{k=0}^m[v_k,\widetilde{v}_k]^{(-1)^k}\right)\otimes \left(\bigotimes\limits_{k=0}^m[u_k,\widetilde{u}_k]^{(-1)^k}\right) \otimes \left(\bigotimes\limits_{k=0}^m[w_k,\widetilde{w}_k]^{(-1)^{k+1}}\right) \right\}= \nonumber \\
(-1)^{\nu}\prod\limits_{k=0}^m [\A^*_k(v_k)/\widetilde{w}_k]^{(-1)^k}\cdot [\beta^*_k(w_k)/\widetilde{u}_k]^{(-1)^{k+1}}\cdot [\delta^*_k(u_k)/ \widetilde{v}_{k+1}]^{(-1)^k}. \label{phi-action2}
\end{align}
Analogous argumentation for the canonical isomorphism $\Phi'$ shows 
\begin{align}
\Phi' \left\{\left(\bigotimes\limits_{k=0}^m[\G \widetilde{v}_{k}, \G v_{k}]^{(-1)^{m-k}}\right)\otimes \right. \qquad \qquad \qquad \qquad \\ \otimes \left. \left(\bigotimes\limits_{k=0}^m[\G \widetilde{u}_{k}, \G u_{k}]^{(-1)^{m-k}}\right) \otimes \left(\bigotimes\limits_{k=0}^m[\G \widetilde{w}_{k}, \G w_{k}]^{(-1)^{k}}\right) \right\}\nonumber \\
=(-1)^{\nu}\!\!\prod\limits_{k=0}^m [\A'^*_{m-k}(\G \widetilde{u}_{k})/\G w_{k}]^{(-1)^{m-k}}\cdot [\beta'^*_{m-k}(\G \widetilde{w}_{k})/\G v_{k}]^{(-1)^{k}} \nonumber \\ \cdot [\delta'^*_{m-k}(\G \widetilde{v}_{k})/ \G u_{k-1}]^{(-1)^{m-k}}. \nonumber \\ \ \label{phi-action3}
\end{align}
Now using the fact that the diagramm \eqref{comm-diagramm} is commutative with vertical maps being linear, we obtain
\begin{align*}
[\delta^*_k(u_k)/ \widetilde{v}_{k+1}]=[(\#_{L^2_1}\circ \G )\delta^*_k(u_k)/ (\#_{L^2_1}\circ \G )\widetilde{v}_{k+1}]= [\delta_*'^{m-k}(\G u_{k})^*/(\G \widetilde{v}_{k+1})^*], \\
[\A^*_k(v_k)/\widetilde{w}_k]=[\beta'^{m-k}_*(\G v_k)^*/(\G\widetilde{w}_k)^*], \\
[\beta^*_k(w_k)/\widetilde{u}_k]= [\A'^{m-k}_*(\G w_k)^*/(\G\widetilde{u}_k)^*],
\end{align*}
where the last two identities are derived in a similar manner as the first one. Now with the following purely algebraic result of Proposition \ref{dual-map} we obtain
\begin{align*}
[\delta^*_k(u_k)/ \widetilde{v}_{k+1}]= [\delta_*'^{m-k}(\G u_{k})^*/(\G \widetilde{v}_{k+1})^*]= [\delta'^*_{m-k}(\G \widetilde{v}_{k+1})/(\G u_{k})], \\
[\A^*_k(v_k)/\widetilde{w}_k]=[\beta'^{m-k}_*(\G v_k)^*/(\G\widetilde{w}_k)^*]=[\beta'^*_{m-k}(\G\widetilde{w}_k)/(\G v_k)], \\
[\beta^*_k(w_k)/\widetilde{u}_k]= [\A'^{m-k}_*(\G w_k)^*/(\G\widetilde{u}_k)^*]= [\A'^*_{m-k}(\G\widetilde{u}_k)/(\G w_k)].
\end{align*}
These identities allow us to compare the actions \eqref{phi-action2} and \eqref{phi-action3} and derive the equality:
\begin{align}\nonumber
&\Phi \left\{\left(\bigotimes\limits_{k=0}^m[v_k,\widetilde{v}_k]^{(-1)^k}\right)\otimes \left(\bigotimes\limits_{k=0}^m[u_k,\widetilde{u}_k]^{(-1)^k}\right) \otimes \left(\bigotimes\limits_{k=0}^m[w_k,\widetilde{w}_k]^{(-1)^{k+1}}\right) \right\}= \\ \nonumber
=&\Phi' \left\{\left(\bigotimes\limits_{k=0}^m[\G \widetilde{v}_k, \G v_k]^{(-1)^{m-k}}\right)\otimes \left(\bigotimes\limits_{k=0}^m[\G \widetilde{u}_k, \G u_k]^{(-1)^{m-k}}\right) \otimes \right.\\ \label{tau2} &\qquad \qquad \qquad \left. \otimes \left(\bigotimes\limits_{k=0}^m[\G \widetilde{w}_k, \G w_k]^{(-1)^{m-k+1}}\right) \right\}=  (-1)^{\nu}\cdot \tau(\mathcal{H}).
\end{align}
On the other hand, since $\G$ is an isometry, we have in \eqref{tau2} the $\Phi'$-action on a volume element, induced by the Hilbert structures on $\mathcal{H'}$. The combinatorial torsion $\tau (\mathcal{H'})$ is defined as the mode of the complex-valued $\Phi'$-image of the volume element. Hence
\begin{align}
\Phi' \left\{\left(\bigotimes\limits_{k=0}^m[\G \widetilde{v}_k, \G v_k]^{(-1)^{m-k}}\!\right)\otimes \right. \qquad \qquad \qquad \\ \otimes \left. \left(\bigotimes\limits_{k=0}^m[\G \widetilde{u}_k, \G u_k]^{(-1)^{m-k}}\!\right) \otimes \left(\bigotimes\limits_{k=0}^m[\G \widetilde{w}_k, \G w_k]^{(-1)^{k}}\!\right) \right\} \nonumber \\ =\label{tau3} (-1)^{\nu}e^{i\psi}\cdot \tau(\mathcal{H'}).
\end{align}
The phase $e^{i\psi}$ can be viewed as the total rotation angle needed to rotate the orthonormal bases $\{\G \widetilde{v}_k, \G v_k \}, \{\G \widetilde{u}_k, \G u_k\}, \{\G \widetilde{w}_k, \G w_k\}$ to orthonormal bases with positive determinants of coordinate change matrices with respect to bases fixed in \eqref{action2} (cf. Lemma \ref{real-determinant}).  
\\[3mm] Since the combinatorial torsions are positive real numbers, comparison of \eqref{tau3} with \eqref{tau2} leads to $$\tau(\mathcal{H})=\tau(\mathcal{H'}).$$
This completes the statement of the theorem. $\square$
\\[4mm] The canonical isomorphisms $\Phi,\Phi'$ induce isomorphisms 
\begin{align}
\Psi: \det  H^*_{\textup{rel}}(M_1,E) \otimes \det H^*_{\textup{abs}}(M_2,E) \rightarrow \det H^*(M_1\#M_2,E), \label{psi1}\\
\Psi': \det  H^*_{\textup{rel}}(M_2,E) \otimes \det H^*_{\textup{abs}}(M_1,E) \rightarrow \det H^*(M_1\#M_2,E)\label{psi2}
\end{align}
by the following formula. Consider any $x \in \det  H^*_{\textup{rel}}(M_1,E)$, $y \in \det H^*_{\textup{abs}}(M_2,E)$ and $z \in \det H^*(M_1\#M_2,E)$. Then we set
$$\Psi(x \otimes y):=\Phi (x\otimes y \otimes z^{-1}) z.$$
The definition of $\Psi'$ is analogous. Then with the result and notation of Theorem \ref{phi-tau} we obtain:
\begin{cor}\label{phi-tau-2}
\begin{align*}
\Psi \left\{\left(\bigotimes\limits_{k=0}^m[v_k,\widetilde{v}_k]^{(-1)^k}\right)\otimes \left(\bigotimes\limits_{k=0}^m[u_k,\widetilde{u}_k]^{(-1)^k}\right) \right\} =\\=(-1)^{\nu}\tau(\mathcal{H})\left(\bigotimes\limits_{k=0}^m[w_k,\widetilde{w}_k]^{(-1)^{k}}\right), \\
\Psi' \left\{\left(\bigotimes\limits_{k=0}^m[\G \widetilde{v}_k, \G v_k]^{(-1)^{m-k}}\right)\otimes \left(\bigotimes\limits_{k=0}^m[\G \widetilde{u}_k, \G u_k]^{(-1)^{m-k}}\right) \right\}=\\  =(-1)^{\nu} \tau(\mathcal{H})\left(\bigotimes\limits_{k=0}^m[\G \widetilde{w}_k, \G w_k]^{(-1)^{m-k}}\right).
\end{align*}
\end{cor}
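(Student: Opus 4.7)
The plan is to derive the corollary directly from Theorem \ref{phi-tau} by unwinding the definition of $\Psi$ (and analogously $\Psi'$). Recall that $\Psi$ is defined by the rule $\Psi(x\otimes y):=\Phi(x\otimes y\otimes z^{-1})\cdot z$ for any nonzero $z\in \det H^*(M_1\#M_2,E)$, the result being independent of the choice of $z$. I would choose the particular volume element
\[
z:=\bigotimes\limits_{k=0}^m[w_k,\widetilde{w}_k]^{(-1)^{k}},
\]
so that
\[
z^{-1}=\bigotimes\limits_{k=0}^m[w_k,\widetilde{w}_k]^{(-1)^{k+1}},
\]
which is precisely the factor appearing in the $\Phi$-argument of Theorem \ref{phi-tau}.

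With this choice, substituting the tensor expression into the definition of $\Psi$ reduces the first identity of the corollary to Theorem \ref{phi-tau}: the prefactor $\Phi(\cdot)$ equals $(-1)^\nu\tau(\mathcal{H})$ by that theorem, and multiplying by $z$ gives the claimed right-hand side. No further computation is required, so the first equation of the corollary is a direct specialization.

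For the $\Psi'$ statement I would proceed identically, this time picking
\[
z:=\bigotimes\limits_{k=0}^m[\G\widetilde{w}_k,\G w_k]^{(-1)^{m-k}},
\]
whose inverse contributes the $(-1)^{m-k+1}$-powers appearing in Theorem \ref{phi-tau}. By the second identity of Theorem \ref{phi-tau}, the $\Phi'$-image on the tensor-plus-inverse element is $(-1)^\nu\tau(\mathcal{H})=(-1)^\nu\tau(\mathcal{H}')$, and multiplying back by $z$ yields the displayed formula.

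There is essentially no obstacle: the content is already packaged inside Theorem \ref{phi-tau}, and the corollary merely repackages it via the definition of $\Psi,\Psi'$. The only point requiring a moment of care is bookkeeping of the sign exponents $(-1)^k$ versus $(-1)^{k+1}$ (respectively $(-1)^{m-k}$ versus $(-1)^{m-k+1}$) when transporting $z^{-1}$ from the tensor factor to an outside multiplication; this is exactly the shift one sees between the exponents in Theorem \ref{phi-tau} and those in Corollary \ref{phi-tau-2}. Once this indexing is checked, the proof is a one-line application of the theorem in each case.
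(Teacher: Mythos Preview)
Your proposal is correct and matches the paper's approach exactly: the paper presents the corollary as an immediate consequence of Theorem \ref{phi-tau} together with the definition $\Psi(x\otimes y)=\Phi(x\otimes y\otimes z^{-1})z$, and your choice of $z$ is precisely the one that makes this transparent. No further argument is given (or needed) in the paper.
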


\section{Splitting formula for Refined Torsion of complexes}\label{splitting-refined-torsion} 
We continue in the setup of Section \ref{gluing-statement}. Consider the refined analytic torsions of the manifolds $M_j,j=1,2$ and the split manifold $M=M_1\cup_N M_2$. According to Proposition \ref{rho-formula} we can write for the refined analytic torsions 
\begin{align*}
\rho_{\textup{an}}(\D)=\frac{1}{T^{RS}(\DD)}\cdot \exp \left[-i\pi \eta(\B_{\textup{even}})+i\pi \textup{rk}(E) \eta (\B_{\textup{trivial}})\right] \hspace{30mm}\\
\times \exp\left[-i\pi \frac{m-1}{2}\dim \ker \B_{\textup{even}}+i\pi \textup{rk}(E) \frac{m}{2}\dim \ker \B_{\textup{trivial}}\right]\rho_{\G}(M,E),\\
\rho_{\textup{an}}(\D_j)=\frac{1}{T^{RS}(\DD_j)}\cdot \exp \left[-i\pi \eta(\B^j_{\textup{even}})+i\pi \textup{rk}(E) \eta (\B^j_{\textup{trivial}})\right] \hspace{30mm} \\ \times \exp\left[-i\pi \frac{m-1}{2}\dim \ker \B^j_{\textup{even}}+i\pi \textup{rk}(E) \frac{m}{2}\dim \ker \B^j_{\textup{trivial}}\right]\rho_{\G}(M_j,E),
\end{align*}
where $j=1,2$ and $T^{RS}(\DD), T^{RS}(\DD_j)$ denote the scalar analytic torsions associated to the complexes $(\domm, \DD), (\domm_j, \DD_j)$ respectively. Furthermore $\rho_{\G}(M,E), \rho_{\G}(M_j,E)$ denote the respective refined torsion elements in the sense of [BV3, (3.7)] for $\lambda=0$. The refined torsion elements are elements of the determinant lines:
\begin{align*}
&\rho_{\G}(M_1,E)\in \det (H^*_{\textup{rel}}(M_1,E)\oplus H^*_{\textup{abs}}(M_1,E)), \\
&\rho_{\G}(M_2,E)\in \det (H^*_{\textup{rel}}(M_2,E)\oplus H^*_{\textup{abs}}(M_2,E)), \\
&\rho_{\G}(M,E)\in \det (H^*(M,E)\oplus H^*(M,E)).
\end{align*}
These elements are in the sense of [BK2, Section 4] the refined torsions $\rho_{[0,\lambda]},\lambda =0$ (see also [BV3, (3.7)]) of the corresponding complexes: 
\begin{align*}
H^*_{\textup{rel}}(M_1,E)&\oplus H^*_{\textup{abs}}(M_1,E), \\
H^*_{\textup{rel}}(M_2,E)&\oplus H^*_{\textup{abs}}(M_2,E), \\
H^*(M,E)&\oplus H^*(M,E).
\end{align*}
Note that up to the identification of Corollary \ref{split-cohomology} the refined torsion $\rho_{\G}(M,E)$ corresponds to the refined torsion of the complex $H^*(M_1\#M_2,E)\oplus H^*(M_1\#M_2,E)$
\begin{align}\label{split-refined2}
\rho_{\G}(M_1\#M_2,E)\in \det (H^*(M_1\#M_2,E)\oplus H^*(M_1\#M_2,E)).
\end{align}
With the preceeding three sections we can now relate the refined torsions $\rho_{\G}(M_1,E)$, $\rho_{\G}(M_2,E)$ and $\rho_{\G}(M_1\#M_2,E)$ together.
For this we first rewrite the refined torsions in convenient terms. We restrict the neccessary arguments to $\rho_{\G}(M_1,E)$, since the discussion of the other elements is completely analogous. 
\\[3mm] Let for $k=0,..,\dim M$ the sets $\{e_k\}$ and $\{\theta_k\}$ be the bases for $H^k_{\textup{rel}}(M_1,E)$ and $H^k_{\textup{abs}}(M_1,E)$ respectively. Then the refined torsion element $\rho_{\G}(M_1,E)$ is given by:
\begin{align}
\rho_{\G}(M_1,E)=(-1)^{R_1}([e_0]\wedge [\theta_0])\otimes ([e_1]\wedge [\theta_1])^{(-1)}\otimes ... \nonumber \\ ...\otimes ([e_{r-1}]\wedge [\theta_{r-1}])^{(-1)^{r-1}}\otimes ([\G \theta_{r-1}]\wedge [\G e_{r-1}])^{(-1)^{r}}\otimes ...\nonumber \\ \label{refined-element-explicit}  ...\otimes ([\G \theta_{1}]\wedge [\G e_{1}])\otimes ([\G \theta_{0}]\wedge [\G e_{0}])^{(-1)},
\end{align}
where $r=(\dim M +1)/2$. The sign $R_1$ is given according to [BK2, (4.2)] by 
\begin{align*}
R_1=\frac{1}{2}\sum\limits_{k=0}^{r-1}&(\dim H^k_{\textup{rel}}(M_1,E) + \dim H^k_{\textup{abs}}(M_1,E)) \cdot \\ \cdot &\left(\dim H^k_{\textup{rel}}(M_1,E) + \dim H^k_{\textup{abs}}(M_1,E) +(-1)^{r-k}\right).
\end{align*}
The formula for $\rho_{\G}(M_1,E)$ is independent of the particular choice of bases $\{e_k\}$ and $\{\theta_k\}$. Hence, since $\{\G e_k\}$ is also a basis of $H^{m-k}_{\textup{abs}}(M_1,E)$ for any $k$, we can write equivalently, replacing in the formula \eqref{refined-element-explicit} the basis $\{\theta_k\}$ by $\{\G e_{m-k}\}$:
\begin{align*}
\rho_{\G}(M_1,E)=(-1)^{R_1}([e_0]\wedge [\G e_m])\otimes ([e_1]\wedge [\G e_{m-1}])^{(-1)}\otimes ... \qquad \qquad \qquad \\ ...\otimes (e_{m-1}]\wedge [\G e_{1}])\otimes ([e_m]\wedge [\G e_{0}])^{(-1)}.
\end{align*}
With the "fusion isomorphism" for graded vector spaces (cf. [BK2, (2.18)])
\begin{align*}
\mu_{(M_1,E)}:\det H^*_{\textup{rel}}(M_1,E) \otimes \det H^*_{\textup{abs}}&(M_1,E) \\ &\xrightarrow{\sim} \det (H^*_{\textup{rel}}(M_1,E) \oplus \det H^*_{\textup{abs}}(M_1,E))
\end{align*}
we obtain
\begin{align*}
\mu_{(M_1,E)}^{(-1)}\left(\rho_{\G}(M_1,E)\right)=\left(\bigotimes\limits_{k=0}^m [e_k]^{(-1)^k} \right)\otimes \left(\bigotimes\limits_{k=0}^m [\G e_k]^{(-1)^{m-k}}\right)\cdot (-1)^{\mathcal{M}(M_1,E)+R_1},
\end{align*}
where with [BK2, (2.19)]
$$\mathcal{M}(M_1,E)\,=\!\sum_{0\leq k<i\leq m}\dim H^i_{\textup{rel}}(M_1,E)\cdot \dim  H^k_{\textup{abs}}(M_1,E).$$
Analogous result holds for the refined torsions $\rho_{\G}(M_2,E)$ and $\rho_{\G}(M_1\#M_2,E)$, where the analogous quantities $R,R_2$ and $\mathcal{M}(M,E)$ and $\mathcal{M}(M_2,E)$ are introduced respectively. Using now the fact that the refined torsion elements are independent of choices, we find with bases, fixed in \eqref{basis-choice}:
\begin{align*}
&\mu_{(M_1,E)}^{(-1)}\left(\rho_{\G}(M_1,E)\right)= \\  & \hspace{22mm}=(-1)^{\mathcal{M}(M_1,E)+R_1} \left( \bigotimes\limits_{k=0}^m[v_k,\widetilde{v}_k]^{(-1)^{k}} \right) \otimes \left( \bigotimes\limits_{k=0}^m[\G \widetilde{v}_k,\G v_k]^{(-1)^{m-k}} \right), \\
&\mu_{(M_2,E)}^{(-1)}\left(\rho_{\G}(M_2,E)\right)= \\ & \hspace{20mm}=(-1)^{\mathcal{M}(M_2,E)+R_2} \left( \bigotimes\limits_{k=0}^m[u_k,\widetilde{u}_k]^{(-1)^{k}} \right) \otimes \left( \bigotimes\limits_{k=0}^m[\G \widetilde{u}_k,\G u_k]^{(-1)^{m-k}} \right), \\
&\mu_{(M_1\#M_2,E)}^{(-1)}\left(\rho_{\G}(M_1\#M_2,E)\right)= \\ & \hspace{14mm} =(-1)^{\mathcal{M}(M_1\#M_2,E)+R} \left( \bigotimes\limits_{k=0}^m[w_k,\widetilde{w}_k]^{(-1)^{k}} \right) \otimes \left( \bigotimes\limits_{k=0}^m[\G \widetilde{w}_k,\G w_k]^{(-1)^{m-k}} \right).
\end{align*}
Now combine the canonical isomorphisms $\Psi, \Psi'$, introduced in \eqref{psi1} and \eqref{psi2}, together with the fusion isomorphisms into one single canonical isomorphism:
\begin{align}\label{omega}
\Omega :=&\mu_{(M_1\#M_2,E)}\circ (\Psi\otimes \Psi')\circ (\mu_{(M_1,E)}^{-1}\otimes \mu_{(M_2,E)}^{-1}):\\ \nonumber
&\det (H^*_{\textup{rel}}(M_1,E)\oplus H^*_{\textup{abs}}(M_1,E)) \otimes \\ \nonumber &\det (H^*_{\textup{rel}}(M_2,E)\oplus H^*_{\textup{abs}}(M_2,E)) \rightarrow \\ \nonumber
& \hspace{42mm} \rightarrow \det (H^*(M_1\#M_2,E)\oplus H^*(M_1\#M_2,E)),
\end{align}
where we employed implicitly flip-isomorphisms in order to reorder the determinant lines appropriately. Due to the Knudson-Momford sign convention this leads to an additional sign. We obtain by Corollary \ref{phi-tau-2} for the action of this canonical isomorphism
\begin{align*}
\Omega (\rho_{\G}(M_1,E)\otimes \rho_{\G}(M_2,E))=(-1)^{\mathcal{M}(M_1,E)+\mathcal{M}(M_2,E)+R_1+R_2+1} \times \\
\mu_{(M_1\#M_2,E)}\left( \Psi \left[ \left( \bigotimes\limits_{k=0}^m[v_k,\widetilde{v}_k]^{(-1)^{k}} \right) \otimes \left( \bigotimes\limits_{k=0}^m[u_k, \widetilde{u}_k]^{(-1)^{k}} \right)\right]\otimes \right. \\ \left. \Psi'\left[ \left( \bigotimes\limits_{k=0}^m[\G \widetilde{v}_k,\G v_k]^{(-1)^{k+1}} \right) \otimes \left( \bigotimes\limits_{k=0}^m[\G \widetilde{u}_k,\G u_k]^{(-1)^{k+1}} \right)\right]\right)= \\
= (-1)^{\mathcal{M}(M_1,E)+\mathcal{M}(M_2,E)+R_1+R_2+1} \tau (\mathcal{H})^2\times \\ \mu_{(M_1\#M_2,E)}\left(\bigotimes\limits_{k=0}^m[w_k,\widetilde{w}_k]^{(-1)^{k}} \right) \otimes \left( \bigotimes\limits_{k=0}^m[\G \widetilde{w}_k,\G w_k]^{(-1)^{k+1}} \right) =\\
= (-1)^{\textup{sign}} \tau (\mathcal{H})^2\rho_{\G}(M_1\#M_2,E),
\end{align*}
where we have set
\begin{align}\label{sign}
\textup{sign}:= \mathcal{M}(M_1,E)+\mathcal{M}(M_2,E)-\mathcal{M}(M_1\#M_2,E) + R_1+R_2-R+1.
\end{align}
Summarizing, we have derived a relation between the refined torsion elements of the splitting problem under the canonical isomorphism $\Omega$:
\begin{prop}\label{splitting-refined}
\begin{align*}
\Omega (\rho_{\G}(M_1,E)\otimes \rho_{\G}(M_2,E))
= (-1)^{\textup{sign}} \tau (\mathcal{H})^2\rho_{\G}(M_1\#M_2,E).
\end{align*}
\end{prop}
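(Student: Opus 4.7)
The plan is to assemble the refined torsion elements on $M_1$, $M_2$ and $M_1 \# M_2$ in the explicit form derived just above the statement, push them through $\Omega$ summand by summand, and read off the scalar via Corollary \ref{phi-tau-2}. The whole argument is essentially bookkeeping of signs once the right bases are put in place; no further analytic input is required.

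First I would fix, for each $k$, the orthonormal bases $\{v_k, \widetilde{v}_k\}$, $\{u_k, \widetilde{u}_k\}$, $\{w_k, \widetilde{w}_k\}$ of the cohomologies appearing in the sequence $\mathcal{H}$, together with their images under $\G$ as bases for the cohomologies appearing in $\mathcal{H}'$; these are the bases from \eqref{basis-choice}, chosen via Lemma \ref{real-determinant} so that the determinant changes \eqref{pos-det} are positive. Using the expression \eqref{refined-element-explicit} and the reshuffle into graded determinant lines through the fusion isomorphism $\mu_{(\cdot)}$, each of $\rho_\G(M_1,E)$, $\rho_\G(M_2,E)$ and $\rho_\G(M_1\#M_2,E)$ becomes, up to the sign $(-1)^{\mathcal{M}+R}$ already recorded in the excerpt, a product of the form
\[
\Bigl(\bigotimes_{k=0}^m [v_k,\widetilde{v}_k]^{(-1)^k}\Bigr) \otimes \Bigl(\bigotimes_{k=0}^m [\G\widetilde{v}_k, \G v_k]^{(-1)^{m-k}}\Bigr),
\]
and similarly for $u$ and $w$. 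This is the point where the choice of \emph{the same} family of bases on both sides of the Poincare duality diagram of Section \ref{PD-boundary} is crucial.

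Next I would apply $\Omega$. By its definition \eqref{omega}, the fusion isomorphisms $\mu_{(M_j,E)}^{-1}$ split each input into its "rel" and "abs" factors; the flip-isomorphisms needed to regroup the rel factors on $M_1, M_2$ with each other (and similarly for abs) contribute the final $+1$ in the sign \eqref{sign} via the Knudson--Mumford rule. Then $\Psi$ acts on the rel/abs package going through $\mathcal{H}$ and $\Psi'$ on the package going through $\mathcal{H}'$. Corollary \ref{phi-tau-2} evaluates each action: $\Psi$ converts the $(v,u)$--part into $\tau(\mathcal{H})$ times the $w$--element, and $\Psi'$ converts the $(\G v, \G u)$--part into $\tau(\mathcal{H}')$ times the $\G w$--element. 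Since $\tau(\mathcal{H}) = \tau(\mathcal{H}')$ by Theorem \ref{phi-tau}, the two factors combine into $\tau(\mathcal{H})^2$. Finally, $\mu_{(M_1\# M_2, E)}$ reassembles the resulting $w$ and $\G w$ pieces into exactly the expression for $\rho_\G(M_1\# M_2, E)$ obtained above, modulo the sign $(-1)^{\mathcal{M}(M_1\# M_2, E)+R}$.

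Collecting the signs from the three fusion-isomorphism expressions and from the flip reordering inside $\Omega$ yields exactly the exponent \eqref{sign}, which finishes the proposition. The only real obstacle is this sign bookkeeping: one must be careful that the Knudson--Mumford convention is applied consistently when (i) transporting each $\rho_\G(\cdot)$ through $\mu^{-1}$, (ii) permuting the four determinant-line factors to separate the $\mathcal{H}$-input from the $\mathcal{H}'$-input, and (iii) reassembling via $\mu_{(M_1\#M_2,E)}$; all the other identifications (commutativity of \eqref{comm-diagramm}, positivity of the chosen determinants, equality of the two combinatorial torsions) have already been arranged in the preceding sections, so they can be invoked as black boxes.
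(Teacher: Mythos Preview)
Your proposal is correct and follows essentially the same route as the paper's own derivation, which is carried out in the paragraphs immediately preceding the proposition: express each $\rho_\G$ through $\mu^{-1}$ in the bases \eqref{basis-choice}, apply $\Psi\otimes\Psi'$ via Corollary~\ref{phi-tau-2}, reassemble with $\mu_{(M_1\#M_2,E)}$, and collect the signs into \eqref{sign}. One small refinement: Corollary~\ref{phi-tau-2} actually produces $(-1)^{\nu}\tau(\mathcal{H})$ (resp.\ $(-1)^{\nu}\tau(\mathcal{H}')$) rather than bare $\tau(\mathcal{H})$, but since $\nu=\nu'$ these two sign factors square to $1$ and your conclusion is unaffected.
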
\ \\
\\[-7mm] This is an important result in the derivation of the actual gluing formula for refined analytic torsion and the final outcome of the preceeding three sections on cohomological algebra.

\section{Combinatorial complexes}
Before we finally prove a gluing formula for refined analytic torsion, consider a general situation with $Z$ being a smooth compact manifold and $Y\subset Z$ a smooth compact submanifold with the natural inclusion $\iota: Y \hookrightarrow Z$. The inclusion induces a group homomorphism $$\iota^*:\pi_1(Y) \to \pi_1(Z).$$
Fix any representation $\rho:\pi_1(Z)\to GL(n,\C)$. It naturally gives rise to further two representations
\begin{align*}
&\rho_Y:=\rho\circ \iota^*:\pi_1(Y)\to GL(n,\C), \\
&\bar{\rho}_Y:\frac{\pi_1(Y)}{\ker \iota^*}\cong im \, \iota^* \to GL(n,\C),\quad
[\gamma] \mapsto \rho_Y(\gamma),
\end{align*}
where the second map is well-defined since by construction $\rho_Y\restriction \ker \iota^*\equiv id$.
\\[3mm] Denote by $\widetilde{Z}$ and $\widetilde{Y}$ the universal covering spaces of $Z$ and $Y$ respectively, which are (cf. [KN, Proposition 5.9 (2)]) principal bundles over $Z,Y$ with respective structure groups $\pi_1(Z),\pi_1(Y)$. Denote by $p_Z$ the bundle projection of the principal bundle $\widetilde{Z}$ over $Z$. By locality the covering space $p^{-1}_Z(Y)$ over $Y$ is a principal bundle over $Y$ with the structure group $im \, \iota^*$. Hence the universal cover $\widetilde{Y}$ is a principal bundle over $p^{-1}_Z(Y)$ with the structure group $\ker \iota^*$. Summarizing we have: 
\begin{align}\label{covering1}
\frac{p^{-1}_Z(Y)}{im \, \iota^*} \cong Y, \quad \frac{\widetilde{Y}}{\ker \iota^*}\cong p^{-1}_Z(Y).
\end{align}
Next we consider any triangulation $\mathcal{Z}$ of $Z$, such that it leaves $Y$ invariant, i.e. $\mathcal{Y}:=\mathcal{Z}\cap Y$ provides a triangulation of the submanifold $Y$. Fix an embedding of $Z$ into $\widetilde{Z}$ as the fundamental domain. Then we obtain a triangulation $\mathcal{\widetilde{Z}}$ of $\widetilde{Z}$ by applying deck transformations of $\pi_1(Z)$ to $\mathcal{Z}$. Put
$$p^{-1}_Z(\mathcal{Y}):=\mathcal{\widetilde{Z}}\cap p^{-1}_Z(Y)$$
which gives a triangulation of $p^{-1}_Z(Y)$ invariant under deck transformations of $im \, \iota^*$. Embed $p^{-1}_Z(Y)$ into its universal cover $\widetilde{Y}$ as the fundamental domain. By applying deck transformations of $\ker \iota^*$ to $p^{-1}_Z(\mathcal{Y})$ we get a triangulation $\mathcal{\widetilde{Y}}$ of $\widetilde{Y}$. Note by construction, in analogy to \eqref{covering1}
\begin{align}\label{covering2}
\frac{p^{-1}_Z(\mathcal{Y})}{im \, \iota^*}\cong \mathcal{Y} , \quad \frac{\mathcal{\widetilde{Y}}}{\ker \iota^*}\cong p^{-1}_Z(\mathcal{Y}).
\end{align}
We form now the combinatorial chain complexes $C_*(.)$ of the triangulations and arrive at the central result of this section.
\begin{thm}\label{comb-complex}Consider the following combinatorial cochain complexes 
\begin{align*}
\textup{Hom}_{\rho_Y}(C_*&(\mathcal{\widetilde{Y}}),\C^n):=\{f \in\textup{Hom}(C_*(\mathcal{\widetilde{Y}}),\C^n) | \\ \forall x& \in C_*(\mathcal{\widetilde{Y}}), \gamma \in \pi_1(Y): f(x\cdot \gamma )=\rho_Y(\gamma)^{-1}f(x)\}, \\
\textup{Hom}_{\bar{\rho}_Y}(C_*&(p^{-1}_Z(\mathcal{Y})),\C^n):=\{f \in\textup{Hom}(C_*(p^{-1}_Z(\mathcal{Y})),\C^n) | \\ \forall x& \in C_*(p^{-1}_Z(\mathcal{Y})), \gamma \in im \, \iota^*: f(x\cdot \gamma )=\bar{\rho}_Y(\gamma)^{-1}f(x)\}.
\end{align*}
These complexes are isomorphic:
$$\textup{Hom}_{\rho_Y}(C_*(\mathcal{\widetilde{Y}}),\C^n) \cong \textup{Hom}_{\bar{\rho}_Y}(C_*(p^{-1}_Z(\mathcal{Y})),\C^n).$$
\end{thm}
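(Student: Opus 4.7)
The plan is to build the isomorphism by pullback along the natural quotient map. Recall from \eqref{covering2} that $p^{-1}_Z(\mathcal{Y})\cong \mathcal{\widetilde{Y}}/\ker\iota^*$, so the quotient projection $q:\mathcal{\widetilde{Y}}\to p^{-1}_Z(\mathcal{Y})$ is a regular covering of simplicial complexes whose group of deck transformations is precisely $\ker\iota^*$. Consequently $q$ induces a surjective chain map $q_*:C_*(\mathcal{\widetilde{Y}})\to C_*(p^{-1}_Z(\mathcal{Y}))$ which, tautologically, identifies two chains in $C_*(\mathcal{\widetilde{Y}})$ precisely when they lie in the same $\ker\iota^*$-orbit.

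I would then define
\[
\Theta:\textup{Hom}_{\bar{\rho}_Y}(C_*(p^{-1}_Z(\mathcal{Y})),\C^n)\longrightarrow \textup{Hom}_{\rho_Y}(C_*(\mathcal{\widetilde{Y}}),\C^n),\qquad f\longmapsto f\circ q_*.
\]
Verifying that $\Theta(f)$ is $\rho_Y$-equivariant is the main bookkeeping step. For $\gamma\in\pi_1(Y)$ with image $[\gamma]\in\textup{im}\,\iota^*$, the definition of $q$ gives $q_*(x\cdot\gamma)=q_*(x)\cdot[\gamma]$, so
\[
(\Theta f)(x\cdot\gamma)=f(q_*(x)\cdot[\gamma])=\bar{\rho}_Y([\gamma])^{-1}f(q_*(x))=\rho_Y(\gamma)^{-1}(\Theta f)(x),
\]
since by construction $\bar{\rho}_Y\circ\iota^*=\rho_Y$. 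Moreover $\Theta$ is a chain map because $q_*$ is.

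To invert $\Theta$, take $\widetilde{f}\in\textup{Hom}_{\rho_Y}(C_*(\mathcal{\widetilde{Y}}),\C^n)$. For $\gamma\in\ker\iota^*$ we have $\rho_Y(\gamma)=1$, so the equivariance condition forces $\widetilde{f}(x\cdot\gamma)=\widetilde{f}(x)$; hence $\widetilde{f}$ is constant on $\ker\iota^*$-orbits and descends to a unique homomorphism $f:C_*(p^{-1}_Z(\mathcal{Y}))\to\C^n$ with $f\circ q_*=\widetilde{f}$. The $\bar{\rho}_Y$-equivariance of $f$ follows by running the computation above in reverse and noting that every element of $\textup{im}\,\iota^*$ lifts to some $\gamma\in\pi_1(Y)$. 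This constructs a two-sided inverse of $\Theta$, and since the construction is purely on the level of cellular chains it automatically commutes with the coboundary operators. The only place where real care is required is ensuring the descent step is well-posed, i.e.\ that the value $f$ assigns to a cell of $p^{-1}_Z(\mathcal{Y})$ is independent of the chosen lift to $\mathcal{\widetilde{Y}}$; this is precisely where the triviality of $\rho_Y$ on $\ker\iota^*$ enters, and is the conceptual heart of the argument.
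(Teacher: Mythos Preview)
Your proposal is correct and follows essentially the same approach as the paper: both arguments exploit the identification $C_*(p^{-1}_Z(\mathcal{Y}))\cong C_*(\mathcal{\widetilde{Y}})/\ker\iota^*$ and the fact that $\rho_Y$ is trivial on $\ker\iota^*$ to pass between the two equivariant Hom-complexes. The only cosmetic difference is that the paper writes the isomorphism in the descent direction (your inverse $\widetilde f\mapsto f$) as the primary map, whereas you take the pullback $f\mapsto f\circ q_*$ as primary; the verifications of well-definedness, equivariance, and compatibility with the coboundary are identical.
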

\begin{proof}The relations in \eqref{covering2} imply in particular $$C_*(p^{-1}_Z(\mathcal{Y}))\cong C_*(\mathcal{\widetilde{Y}})/ \ker \iota^*.$$
Hence to any $x \in C_*(\mathcal{\widetilde{Y}})$ we can associate its equivalence class $[x]\in C_*(p^{-1}_Z(\mathcal{Y}))$ and define
\begin{align*}
\phi: \textup{Hom}_{\rho_Y}(C_*(\mathcal{\widetilde{Y}}),\C^n) &\rightarrow \textup{Hom}(C_*(p^{-1}_Z(\mathcal{Y})),\C^n), \\
f &\mapsto \phi f, \quad \phi f[x]:=f(x).
\end{align*}
This construction is well-defined, since for any other representative $x'\in [x]$ there exists $\gamma \in \ker \iota^*$ with $x'=x\cdot \gamma$ and since $f \in \textup{Hom}_{\rho_Y}(C_*(\mathcal{\widetilde{Y}}),\C^n)$ we get $$f(x')=\rho_Y(\gamma)^{-1}f(x)=[\rho \circ \iota^*(\gamma)]^{-1}f(x)=f(x).$$
Note further with $f$ and $x$ as above and $[\gamma]\in \pi_1(Y)/\ker \iota^* \cong im \, \iota^*$:
\begin{align*}
(\phi f)([x]\cdot [\gamma])=(\phi f)[x \cdot \gamma]=f(x\cdot \gamma)=\\=\rho_Y(\gamma)^{-1}f(x)=\bar{\rho}_Y([\gamma])^{-1}(\phi f)[x].
\end{align*}
Hence in fact we have a well-defined map:
$$\phi: \textup{Hom}_{\rho_Y}(C_*(\mathcal{\widetilde{Y}}),\C^n) \rightarrow \textup{Hom}_{\bar{\rho}_Y}(C_*(p^{-1}_Z(\mathcal{Y})),\C^n).$$
Now we denote the boundary operators on $C_*(\mathcal{\widetilde{Y}})$ and $C_*(p^{-1}_Z(\mathcal{Y}))$ by $\widetilde{\delta}$ and $\delta$ respectively. They give rise to coboundary operators $\widetilde{d}$ and $d$ on the cochain complexes. Observe
\begin{align*}
d(\phi f)[x]=(\phi f)(\delta [x])=(\phi f)[\widetilde{\delta} x]=\\=f(\widetilde{\delta} x)=\widetilde{d}f(x)=(\phi \widetilde{d}f)[x].
\end{align*}
This shows $$d \phi = \phi \widetilde{d}.$$
Thus $\phi$ is a well-defined homomorphism of complexes. It is surjective by construction. Injectivity of $\phi$ is also obvious. Thus $\phi$ is an isomorphism of complexes, as desired.
\end{proof}

\section{Gluing formula for Refined Analytic Torsion}\label{gluing-proof} 
We now finally are in the position to derive a gluing formula for refined analytic torsion. As a byproduct we obtain a splitting formula for the scalar analytic torsion in terms of combinatorial torsions of long exact sequences. 
\\[3mm] We derive the gluing formula by relating the Ray-Singer analytic torsion norm to the Reidemeister combinatorial torsion norm and applying the gluing formula on the combinatorial side, established by M. Lesch in [L2]. This makes it necessary to use the Cheeger-M\"{u}ller Theorem on manifolds with and without boundary.
\\[3mm] Continue in the setup of Section \ref{gluing-statement}. Consider a smooth triangulation $X$ of the closed smooth split manifold $$M=M_1\cup_N M_2$$ that leaves the compact submanifolds $M_1,M_2,N$ invariant, i.e. with $X_j:=X\cap M_j, j=1,2$ and $W:=X_j\cap N$ we have subcomplexes of $X$ providing smooth triangulations of $M_j,j=1,2$ and $N$ respectively, and $$X=X_1\cup_W X_2.$$
Denote by $\widetilde{M}_j$ the universal covering spaces of $M_j,j=1,2$. Fix embeddings of $M_j$ into $\widetilde{M}_j$ as fundamental domains. Then the triangulation $X_j$ of $M_j$ induces under the action of $\pi_1(M_j)$, viewed as the group of deck-transformations of $\widetilde{M}_j$, smooth triangulation $\widetilde{X}_j$ of the universal cover $\widetilde{M}_j$ for each $j=1,2$. 
\\[3mm] The complex chain group $C_*(\widetilde{X}_j)$ is generated by simplices of $\widetilde{X}_j$ and is a module over the group algebra $\C[\pi_1(X_j)]$. The simplices of $X_j$ form a preferred base for $C_*(\widetilde{X}_j)$ as a $\C[\pi_1(X_j)]$-module. 
\\[3mm] Furthermore the given unitary representation $\rho:\pi_1(M)\to U(n,\C)$ gives rise to the associated unitary representations $\rho_j:=\rho \circ \iota_j^*$ of the fundamental groups $\pi_1(M_j)$, where $\iota^*_j:\pi_1(M_j)\to \pi_1(M)$ are the natural group homomorphisms induced by the inclusions $\iota_j:M_j\hookrightarrow M, j=1,2$. We can now define for each $j$
\begin{align*}
C^*(X_j,\rho_j):=\textup{Hom}_{\rho_j}(C_*(\widetilde{X}_j),\C^n)=\hspace{70mm}\\
\{f \in \textup{Hom}(C_*(\widetilde{X}_j),\C^n)| \forall x \in C_*(\widetilde{X}_j), \gamma \in \pi_1(M_j): \ f(x\cdot \gamma)=\rho_j(\gamma)^{-1}f(x)\}\\
\cong C^*(\widetilde{X}_j) \otimes_{\C[\pi_1(M_j)]}\C^n,
\end{align*}
where the $\C[\pi_1(M_j)]$-module structure of $C^*(\widetilde{X}_j)$ comes from the module structure of the dual space $C_*(\widetilde{X}_j)$ and the $\C[\pi_1(M_j)]$ module structure on $\C^n$ is obtained via the representation $\rho_j$.
\\[3mm] The boundary operator on $C_*(\widetilde{X}_j)$ induces a coboundary operator on $C^*(X_j,\rho_j)$. Further the preferred base on $C_*(\widetilde{X}_j)$ together with a fixed volume on $\C^n$ yields a Hilbert structure on $C^*(X_j,\rho_j)$. So $C^*(X_j,\rho_j)$ becomes a finite Hilbert complex.
\\[3mm] Next we consider again the universal coverings $p_j:\widetilde{M}_j\to M_j$. Then the preimage $p_j^{-1}(N)\subset \widetilde{M}_j$ is a covering space of $N$ with the group of deck transformations $$\textup{im}(\pi_1(N)\xrightarrow{\iota^*} \pi_1(M_j))\subset \pi_1(M_j).$$ Here $\iota^*$ is the natural homomorphism of groups induced by the inclusion $N\hookrightarrow M_j$. We do not distinguish the inclusions of $N$ into $M_j,j=1,2$ at this point, since it will always be clear from the context.
\\[3mm] The triangulation $W\subset X_j$ induces with a fixed embedding of $M_j$ into $\widetilde{M}_j$ a triangulation $p^{-1}_j(W)$ of $p_j^{-1}(N)$ by the deck tranformations of $\iota^*\pi_1(N)\subset \pi_1(M_j)$. The chain complex $C_*(p^{-1}_j(W))$ is generated by simplices of $p^{-1}_j(W)$ and is a module over the group subalgebra $\C [\iota^*\pi_1(N)]\subset \C[\pi_1(M_j)]$. It is a subcomplex of $C_*(\widetilde{X}_j)$. 
\\[3mm] The following observation follows from Theorem \ref{comb-complex} and is central for the later constructions:
\begin{align}\nonumber
&f\in \textup{Hom}_{\rho_j}(C_*(\widetilde{X}_j),\C^n) \\ \label{isomorphism} \Rightarrow &f|_{C_*(p^{-1}_j(W))} \in \textup{Hom}_{\bar{\rho}_N}(C_*(p^{-1}_j(W)),\C^n)\cong C^*(W,\rho_N),
\end{align}
where $\rho_N=\rho_j\circ \iota^*: \pi_1(N)\to U(n,\C)$ is the natural representation of $\pi_1(N)$. It is induced by $\rho$ and is trivial over $\ker \, \iota^*$ by definition. The homomorphism $\bar{\rho}_N$ is obtained from $\rho_N$ by dividing out the trivial part:
$$\bar{\rho}_N:\frac{\pi_1(N)}{\ker \, \iota^*}\cong im \, \iota^* \to U(n,\C).$$
The isomorphism in \eqref{isomorphism} in particular implies that we can compare the restrictions to $C_*(p^{-1}_j(W))$ for elements of both complexes $C^*(X_j,\rho_j),j=1,2$. We can now define:
\begin{align*}
&C^*(X_j,W,\rho_j):=\{f\in \textup{Hom}_{\rho}(C_*(\widetilde{X}_j),\C^n) | f|_{C_*(p^{-1}_j(W))}=0\}, \\
&C^*(X_1\#X_2,\rho):=\{(f,g)\in C^*(X_1,\rho_1)\oplus C^*(X_2,\rho_2) | f|_{C_*(p^{-1}_j(W))}=g|_{C_*(p^{-1}_j(W))}\}. 
\end{align*}
These complexes inherit structure of finite Hilbert complexes from $C^*(X_j,\rho_j)$ for $j=1,2$. Fix the naturally induced Hilbert structure on the cohomology, which gives rise to norms on the determinant lines of cohomology, and define the combinatorial Reidemeister norms
\begin{align*}
&\|\cdot \|^R_{\det H^*(C^*(X_1\#X_2,\rho))}:= \tau (C^*(X_1\#X_2,\rho))^{-1} \|\cdot \|_{\det H^*(C^*(X_1\#X_2,\rho))},\\ 
&\|\cdot \|^R_{\det H^*(C^*(X_j,W,\rho))}:= \tau (C^*(X_j,W,\rho))^{-1}\|\cdot \|_{\det H^*(C^*(X_j,W,\rho))},\\
&\|\cdot \|^R_{\det H^*(C^*(X_j,\rho))}:=\tau (C^*(X_j,\rho))^{-1}\|\cdot \|_{\det H^*(C^*(X_j,\rho))},
\end{align*}
where we have put for any finite Hilbert complex $(C^*,\partial_*)$ with the naturally induced Hilbert structure on cohomology $H^*(C^*,\partial_*)$ and the associated Laplacians denoted by $\triangle_*$: $$\log \tau (C^*,\partial_*)=\frac{1}{2}\sum_{j}(-1)^j\cdot j\cdot \zeta'(0,\triangle_j).$$
This definition corresponds to the sign convention for the Ray-Singer norms in [BV3, Section 5]. The Reidemeister norms do not depend on choices made for the construction and are in particular invariant under subdivisions, see [Mi, Theorem 7.1] and [RS, Section 4]. Since any two smooth triangulations admit a common subdivision, see [Mun], the Reidemeister norms do not depend on the choice of a smooth triangulation $X$. 
\\[3mm] Consider now the following short exact sequences of finite Hilbert complexes:
\begin{align}\label{A-complex}
0\rightarrow C^*(X_1,W,\rho)\xrightarrow{\A_c}C^*(X_1\#X_2,\rho)\xrightarrow{\beta_c}C^*(X_2,\rho)\rightarrow 0, \\
\label{B-complex}
0\rightarrow C^*(X_2,W,\rho)\xrightarrow{\A'_c}C^*(X_1\#X_2,\rho)\xrightarrow{\beta'_c}C^*(X_1,\rho)\rightarrow 0,
\end{align}
where $\A_c,\A'_c$ are the natural inclusions and $\beta_c,\beta'_c$ the natural restrictions. Both sequences are exact by definition of the corresponding homomorphisms of complexes. The associated long exact sequences in cohomology, with the Hilbert structures being naturally induced by the Hilbert structures of the combinatorial complexes as defined above, shall be denoted by $\mathcal{H}_c$ and $\mathcal{H'}_c$ respectively.
\\[3mm] Consider further the following complexes
\begin{align*}
0 \rightarrow (\Omega^*_{\min}(M_1,E),\D_1)\xrightarrow{\A} (\Omega^*(M_1\#M_2,E),\D_S) \xrightarrow{\beta} (\Omega^*_{\max}(M_2,E),\D_2)\rightarrow 0, \\
0 \rightarrow (\Omega^*_{\min}(M_2,E),\D_2)\xrightarrow{\A'} (\Omega^*(M_1\#M_2,E),\D_S) \xrightarrow{\beta'} (\Omega^*_{\max}(M_1,E),\D_1)\rightarrow 0,
\end{align*}
which were already introduced in Section \ref{cohom-algebra}. Their associated long exact sequences (cf. \eqref{LES-H}) are denoted by $\mathcal{H}$ and $\mathcal{H'}$ respectively. The short exact sequences commute under the de Rham maps with the short exact sequences \eqref{A-complex} and \eqref{B-complex},
respectively. 
\\[3mm] Thus the corresponding diagramms of the long exact sequences $\mathcal{H},\mathcal{H}_c$ and $\mathcal{H}',\mathcal{H}'_c$ commute. The de Rham maps induce isomorphisms on cohomology, as established in [RS, Section 4] with arguments for orthogonal representations which work for unitary representations as well:
\begin{align}\label{deRham1}
H^*_{\textup{abs}}(M_j,E)\cong H^*(C^*(X_j,\rho_j)), \ H^*_{\textup{rel}}(M_j,E)\cong H^*(C^*(X_j,W,\rho_j)).
\end{align}
>From these identifications we obtain with the five-lemma in algebra applied to the commutative diagramms of long exact sequences $\mathcal{H}$ and $\mathcal{H}_c$ or $\mathcal{H}'$ and $\mathcal{H}'_c$:
\begin{align}\label{deRham3}
H^*(M_1\#M_2,E)\cong H^*(C^*(X_1\#X_2,\rho)), 
\end{align}
induced by the de Rham integration maps as well. Thus under the de Rham isomorphisms the long exact sequences $\mathcal{H}_c,\mathcal{H}'_c$ correspond to $\mathcal{H},\mathcal{H}'$ respectively, and differ only in the fixed Hilbert structures.
\\[3mm] Furthermore the long exact sequences $\mathcal{H}_c,\mathcal{H}'_c$ give rise to isomorphisms on determinant lines in a canonical way (recall the definition of $\Psi, \Psi'$ in \eqref{psi1} and \eqref{psi2})
\begin{align*}
\Psi_c:\det H^*(C^*(X_1,W,\rho))\otimes \det H^*(C^*(X_2,\rho)) \rightarrow \det H^*(C^*(X_1\#X_2,\rho)), \\
\Psi'_c:\det H^*(C^*(X_2,W,\rho))\otimes \det H^*(C^*(X_1,\rho)) \rightarrow \det H^*(C^*(X_1\#X_2,\rho)).
\end{align*}
These maps correspond to the canonical identifications $\Psi,\Psi'$ introduced in Section \ref{canonical} up to the de Rham isomorphisms. We can now prove an appropriate gluing result for the combinatorial Reidemeister norms.
\begin{thm}\label{comb-gluing} \ \\
Let $x,y$ be elements of $\det H^*(C^*(X_1,W,\rho)),\det H^*(C^*(X_2,\rho))$ and $x',y'$ elements of $\det H^*(C^*(X_2,W,\rho)), \det H^*(C^*(X_1,\rho))$, respectively. Then we obtain for the combinatorial Reidemeister norms the following relation:
\begin{align}
&\|\Psi_c(x\otimes y)\|^R_{\det H^*(C^*(X_1\#X_2,\rho))}= \label{r-norms1} \\ = &2^{\chi(N)/2}\|x\|^R_{\det H^*(C^*(X_1,W,\rho))}\|y\|^R_{\det H^*(C^*(X_2,\rho))}, \nonumber \\
&\|\Psi'_c(x'\otimes y')\|^R_{\det H^*(C^*(X_1\#X_2,\rho))}= \label{r-norms2} \\ = &2^{\chi(N)/2}
\|x'\|^R_{\det H^*(C^*(X_2,W,\rho))}\|y'\|^R_{\det H^*(C^*(X_1,\rho))}. \nonumber 
\end{align}
\end{thm}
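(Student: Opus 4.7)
My plan is to derive both \eqref{r-norms1} and \eqref{r-norms2} as consequences of M.~Lesch's combinatorial gluing formula [L2] applied to the two short exact sequences \eqref{A-complex} and \eqref{B-complex} of finite Hilbert complexes, combined with a characterization of $\Psi_c,\Psi'_c$ in terms of the Reidemeister torsion of the associated cohomological long exact sequences. The two identities are proved by the same argument with the roles of $X_1$ and $X_2$ swapped, so I focus on \eqref{r-norms1}.

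First I analyze the metric structure of the sequence \eqref{A-complex}. The inclusion $\A_c(f)=(f,0)$ is isometric because $f \in C^*(X_1,W,\rho)$ vanishes on $p_1^{-1}(W)$-simplices, so $(f,0)$ automatically satisfies the boundary-matching condition that defines $C^*(X_1\#X_2,\rho)$. The orthogonal complement of $\A_c(C^*(X_1,W,\rho))$ in $C^*(X_1\#X_2,\rho)$ consists of pairs $(\phi,g)$ where $\phi=g|_{p_1^{-1}(W)}$, extended by zero from $W$-simplices to $X_1$. Under the bijection $(\phi,g)\leftrightarrow g$, the surjection $\beta_c$ becomes the identity on vector spaces, but acquires a metric distortion: $\|(\phi,g)\|^2=\|g\|^2+\|g|_W\|^2$. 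In cochain degree $k$ this distortion is the operator $I+R_k^*R_k$, where $R_k\colon C^k(X_2,\rho)\to C^k(W,\rho_N)$ is restriction to $W$-simplices; in the preferred simplicial basis $R_k$ is a coordinate projection of rank $\dim C^k(W,\rho_N)$, so $\det(I+R_k^*R_k)=2^{\dim C^k(W,\rho_N)}$.

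I now invoke Lesch's multiplicativity formula [L2] for the Reidemeister torsion of a short exact sequence of finite Hilbert complexes. The formula produces a comparison of the scalar torsions in which the above metric distortion enters as an alternating product of the local determinants, yielding
\begin{equation*}
\tau(C^*(X_1\#X_2,\rho)) \;=\; 2^{-\chi(N)/2}\,\tau(C^*(X_1,W,\rho))\,\tau(C^*(X_2,\rho))\,\tau(\mathcal{H}_c),
\end{equation*}
where the exponent $-\chi(N)/2$ arises from $\tfrac12\sum_k(-1)^k\dim C^k(W,\rho_N)$. The canonical isomorphism $\Psi_c$ associated to $\mathcal{H}_c$ satisfies, by the combinatorial analog of Corollary \ref{phi-tau-2} applied to the Hilbert complexes in question,
\begin{equation*}
\|\Psi_c(x\otimes y)\|_{\det H^*(C^*(X_1\#X_2,\rho))} \;=\; \tau(\mathcal{H}_c)\,\|x\|_{\det H^*(C^*(X_1,W,\rho))}\,\|y\|_{\det H^*(C^*(X_2,\rho))}.
\end{equation*}

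Substituting the second display into the first and unwinding the definition $\|\cdot\|^R:=\tau(\cdot)^{-1}\|\cdot\|$ yields \eqref{r-norms1}. Formula \eqref{r-norms2} follows by the identical argument applied to \eqref{B-complex}, using the equality $\tau(\mathcal{H}_c)=\tau(\mathcal{H}'_c)$, which is the combinatorial counterpart of Theorem \ref{phi-tau}. The principal obstacle is the precise extraction of the factor $2^{-\chi(N)/2}$ from the alternating product of local distortion determinants; this is exactly the content of Lesch's formula in [L2], and its validity relies crucially on the triangulation $X=X_1\cup_W X_2$ being compatible with the decomposition $M=M_1\cup_N M_2$, so that the boundary distortion decouples cleanly in each cochain degree.
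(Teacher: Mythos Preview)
Your argument is correct and follows essentially the same route as the paper: invoke Lesch's combinatorial gluing formula [L2] for the scalar torsions of the short exact sequences \eqref{A-complex}, \eqref{B-complex}, combine with the norm identity $\|\Psi_c(x\otimes y)\|=\tau(\mathcal{H}_c)\|x\|\,\|y\|$ (the combinatorial analogue of Corollary~\ref{phi-tau-2}), and unwind the definition of the Reidemeister norm. Your explicit metric analysis of $\A_c,\beta_c$ and the computation $\det(I+R_k^*R_k)=2^{\dim C^k(W,\rho_N)}$ is a helpful elaboration of where the factor $2^{-\chi(N)/2}$ in [L2] originates, but note that the appeal to $\tau(\mathcal{H}_c)=\tau(\mathcal{H}'_c)$ for \eqref{r-norms2} is superfluous---the identical argument applied to \eqref{B-complex} already produces $\tau(\mathcal{H}'_c)$ in both the torsion gluing and the norm identity, and it cancels.
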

\begin{proof}
First apply the gluing formula in [L2], derived by introducing transmission boundary conditions depending on a parameter, in the spirit of [V]:
\begin{align}
\tau (C^*(X_1\#X_2,\rho))=\tau (C^*(X_1,W,\rho)) \cdot \tau (C^*(X_2,\rho))\cdot \tau (\mathcal{H}_c)\cdot 2^{-\chi(N)/2}, \label{lesch1} \\
\tau (C^*(X_1\#X_2,\rho))=\tau (C^*(X_2,W,\rho)) \cdot \tau (C^*(X_1,\rho))\cdot \tau (\mathcal{H'}_c)\cdot 2^{-\chi(N)/2}. \label{lesch2}
\end{align}
By the definition of the combinatorial torsions $\tau(\mathcal{H}_c)$ and $\tau(\mathcal{H'}_c)$ we obtain the following relation to the action of $\Phi_c,\Phi'_c$, by an appropriate version of Corollary \ref{phi-tau-2}:
\begin{align}
&\|\Psi_c(x\otimes y)\|_{\det H^*(C^*(X_1\#X_2,\rho))}= \label{comb-norms1} \\ = &\tau (\mathcal{H}_c)\cdot \|x\|_{\det H^*(C^*(X_1,W,\rho))}\|y\|_{\det H^*(C^*(X_2,\rho))}, \nonumber \\
&\|\Psi'_c(x'\otimes y')\|_{\det H^*(C^*(X_1\#X_2,\rho))}= \label{comb-norms2} \\ = &\tau (\mathcal{H'}_c)\cdot
\|x'\|_{\det H^*(C^*(X_2,W,\rho))}\|y'\|_{\det H^*(C^*(X_1,\rho))}. \nonumber 
\end{align}
Now a combination of the relations above, together with the gluing formulas \eqref{lesch1} and \eqref{lesch2} gives the desired statement.
\end{proof}\ \\
\\[-7mm] We can now prove the following gluing result for the analytic Ray-Singer torsion norms.

\begin{thm}\label{rs-gluing}
Let $\Omega$ be the canonical isomorphism of determinant lines, defined in \eqref{omega}.
\begin{align*}
\Omega : \det H^*(\domm_1,\DD_1) \otimes \det H^*(\domm_2,\DD_2)\rightarrow \hspace{30mm} \\ \det (H^*(M_1\#M_2,E)\oplus H^*(M_1\#M_2,E)).
\end{align*}
For any $\gamma_1,\gamma_2$ in $\det H^*(\domm_1,\DD_1), \det H^*(\domm_2,\DD_2)$ respectively, we have in terms of the analytic Ray-Singer torsion norms on the determinant lines:
\begin{align*}
&\|\Omega (\gamma_1\otimes \gamma_2)\|^{RS}_{\det (H^*(M_1\#M_2,E)\oplus H^*(M_1\#M_2,E))}= \\=&
2^{\chi(N)}\|\gamma_1\|^{RS}_{\det H^*(\domm_1,\DD_1)}\|\gamma_2\|^{RS}_{\det H^*(\domm_2,\DD_2)}.
\end{align*}
\end{thm}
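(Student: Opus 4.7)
The strategy is to transfer the equality from the analytic side to the combinatorial side via the Cheeger--M\"uller theorem for manifolds with and without boundary (for unitary representations, cf. [L\"u], [V]) and then apply the combinatorial gluing formula for Reidemeister norms, Theorem \ref{comb-gluing}. The factor $2^{\chi(N)}$ will arise because the determinant lines involved here are those of doubled complexes $\DD_j = \D_{j,\min}\oplus \D_{j,\max}$, so the combinatorial gluing enters twice, each time contributing $2^{\chi(N)/2}$.

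First I would decouple the norm on $\det H^*(\domm_j,\DD_j)$ by means of the fusion isomorphism $\mu_{(M_j,E)}$, factoring it as the product of the Ray--Singer norms on $\det H^*_{\textup{rel}}(M_j,E)$ and $\det H^*_{\textup{abs}}(M_j,E)$, and similarly on the side of $M_1\#M_2$ using Corollary \ref{split-cohomology}. By the very definition \eqref{omega} of $\Omega$, after this fusion step the map reduces to $\Psi\otimes \Psi'$, and under the de Rham isomorphisms \eqref{deRham1}--\eqref{deRham3} these correspond precisely to the combinatorial maps $\Psi_c,\Psi'_c$ of the previous section, since the short exact sequences defining them commute with de Rham integration.

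Next I would invoke the Cheeger--M\"uller theorem for manifolds with boundary to identify the analytic Ray--Singer norm with the combinatorial Reidemeister norm on each determinant line. Concretely, on $\det H^*_{\textup{rel}}(M_j,E)$ the Ray--Singer norm equals $\|\cdot\|^R_{\det H^*(C^*(X_j,W,\rho))}$, on $\det H^*_{\textup{abs}}(M_j,E)$ it equals $\|\cdot\|^R_{\det H^*(C^*(X_j,\rho))}$, and on $\det H^*(M,E)\cong \det H^*(M_1\# M_2,E)$ it equals $\|\cdot\|^R_{\det H^*(C^*(X_1\# X_2,\rho))}$.

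Having made these identifications, the statement becomes a direct consequence of Theorem \ref{comb-gluing}: one applies the relation \eqref{r-norms1} to the relative part of $\gamma_1$ paired with the absolute part of $\gamma_2$ (this is the content of $\Psi_c$), and \eqref{r-norms2} to the absolute part of $\gamma_1$ paired with the relative part of $\gamma_2$ (the content of $\Psi'_c$). Each application contributes a factor $2^{\chi(N)/2}$, and multiplying yields $2^{\chi(N)/2}\cdot 2^{\chi(N)/2}=2^{\chi(N)}$, as claimed. The main obstacle I anticipate is bookkeeping: one must verify carefully that after fusion and de Rham translation, the canonical isomorphism $\Omega$ splits as $\Psi_c\otimes \Psi'_c$ in exactly the way demanded by Theorem \ref{comb-gluing}, up to signs and phases that disappear upon taking norms. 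Since norms are insensitive to the Knudson--Mumford signs and to phase factors (the latter controlled by Theorem \ref{phi-tau} and Corollary \ref{phi-tau-2}), this bookkeeping is tractable but needs to be executed with care.
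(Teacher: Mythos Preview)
Your overall strategy coincides with the paper's: reduce to the combinatorial gluing formula (Theorem \ref{comb-gluing}) via the Cheeger--M\"uller theorem, apply it once to $\Psi$ and once to $\Psi'$, and observe that the fusion isomorphisms are isometries for the Ray--Singer norms. The factor $2^{\chi(N)}$ then arises exactly as you say, as $2^{\chi(N)/2}\cdot 2^{\chi(N)/2}$.

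There is, however, a genuine gap in your invocation of Cheeger--M\"uller. You assert that the Ray--Singer norm on $\det H^*_{\textup{rel/abs}}(M_j,E)$ \emph{equals} the Reidemeister norm on the corresponding combinatorial determinant line, and likewise that $\|\cdot\|^{RS}_{\det H^*(M_1\#M_2,E)}$ equals $\|\cdot\|^R_{\det H^*(C^*(X_1\#X_2,\rho))}$. Neither is true. L\"uck's theorem for manifolds with boundary gives, in the conventions of this paper,
\[
\|\cdot\|^R_{\det H^*(C^*(X_j,\rho))}=2^{\chi(N)/4}\|\cdot\|^{RS}_{\det H^*_{\textup{abs}}(M_j,E)},\qquad
\|\cdot\|^R_{\det H^*(C^*(X_j,W,\rho))}=2^{\chi(N)/4}\|\cdot\|^{RS}_{\det H^*_{\textup{rel}}(M_j,E)},
\]
and the comparison on the split side is not a direct instance of Cheeger--M\"uller at all: $C^*(X_1\#X_2,\rho)$ is not the simplicial complex of the closed manifold $M$. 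The paper establishes separately (Proposition \ref{anticipation}) that
\[
\|\cdot\|^R_{\det H^*(C^*(X_1\#X_2,\rho))}=2^{\chi(N)/2}\|\cdot\|^{RS}_{\det H^*(M_1\#M_2,E)},
\]
by comparing two short exact sequences whose induced maps on determinant lines are Reidemeister isometries and invoking Cheeger--M\"uller only on the closed manifold $M$. When you feed these corrected relations into Theorem \ref{comb-gluing}, the $2^{\chi(N)/2}$ from the left side cancels against the $2^{\chi(N)/4}\cdot 2^{\chi(N)/4}$ from the right, and the surviving $2^{\chi(N)/2}$ from the combinatorial gluing itself is what you want. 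So your final accounting is correct, but only because two errors cancel; the intermediate equalities you wrote down are false and must be replaced by the ones above, together with an argument for the split comparison.
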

\begin{proof}
Under the de Rham isomorphisms we can relate the combinatorial Reidemeister norms to the analytic Ray-Singer torsion norms. We get by an appropriate version of [L\"{u}] 
\begin{align}
&\|\cdot \|^R_{\det H^*(C^*(X_j,\rho))}=2^{\chi(N)/4}\|\cdot \|^{RS}_{\det H^*_{\textup{abs}}(M_j,E)}, \nonumber \\ 
&\vspace{20mm} \|\cdot \|^R_{\det H^*(C^*(X_j,W, \rho))}=2^{\chi(N)/4}\|\cdot \|^{RS}_{\det H^*_{\textup{rel}}(M_j,E)}, \label{lueck}
\end{align}
where $\chi(N)$ is the Euler characteristic of the closed manifold $N$ with the representation $\rho_N$ of its fundamental group, hence defined in terms of the twisted cohomology groups $H^*(N,E|_N)$.
Furthermore we need the following relation:
\begin{align}\label{split-cheeger}
\|\cdot \|^R_{\det H^*(C^*(X_1\#X_2,\rho))}=2^{\chi(N)/2}\|\cdot \|^{RS}_{\det H^*(M_1\#M_2,E)}.
\end{align}
This result is proved for trivial representations in [V, Theorem 1.5]. This is done by discussing a family of elliptic transmission value problems and doesn't rely on the Cheeger-M\"{u}ller theorem. However in the setup of the present discussion we provide below in Proposition \ref{anticipation} a simple proof for general unitary representations, using the Cheeger-M\"{u}ller Theorem on closed manifolds
\\[3mm] It is important to note that the Ray-Singer analytic and combinatorial torsion considered in [V] and [L\"{u}] are squares of the torsion norms in our convention and further differ in the sign convention (we adopted the sign convention of [BK2, Section 11.2]). Therefore we get factors $2^{\chi(N)/4}, 2^{\chi(N)/2}$ in \eqref{lueck} and \eqref{split-cheeger} respectively, instead of $2^{-\chi(N)/2}, 2^{-\chi(N)}$ as asserted in [L\"{u}, Theorem 4.5] and [V, Theorem 1.5].
\\[3mm] By definition the canonical maps $\Psi_c$ and $\Psi'_c$ correspond under the de Rham isomorphism to the canonical maps $\Psi$ and $\Psi'$ respectively. In view of Theorem \ref{comb-gluing}, the identities \eqref{lueck} and the relation \eqref{split-cheeger} we obtain the following gluing formulas:
\begin{align}
&\|\Psi(x\otimes y)\|^{RS}_{\det H^*(M_1\#M_2,E))}= \label{a-norms1} \\ = &2^{\chi(N)/2}\|x\|^{RS}_{\det H^*_{\textup{rel}}(M_1,E))}\|y\|^{RS}_{\det H^*_{\textup{abs}}(M_2,E))}, \nonumber \\
&\|\Psi'(x'\otimes y')\|^{RS}_{\det H^*(M_1\#M_2,E))}= \label{a-norms2} \\ = 
&2^{\chi(N)/2}\|x'\|^{RS}_{\det H^*_{\textup{rel}}(M_2,E))}\|y'\|^{RS}_{\det H^*_{\textup{abs}}(M_1,E))}. \nonumber 
\end{align}
The fusion isomorphisms $\mu_{(M_1,E)},\mu_{(M_2,E)}$ and $\mu_{(M_1\#M_2,E)}$, used in the construction of the canonical isomorphism $\Omega$, are by construction isometries with respect to the analytic Ray-Singer norms and hence in total we obtain for any $\gamma_1,\gamma_2$ in $\det H^*(\domm_1,\DD_1), \det H^*(\domm_2,\DD_2)$ respectively, 
\begin{align*}
\|\Omega (\gamma_1\otimes \gamma_2)\|^{RS}_{\det (H^*(M_1\#M_2,E)\oplus H^*(M_1\#M_2,E)}= \\=
2^{\chi(N)}\|\gamma_1\|^{RS}_{\det H^*(\domm_1,\DD_1)}\|\gamma_2\|^{RS}_{\det H^*(\domm_2,\DD_2)},
\end{align*}
where we recall the following facts by construction:
\begin{align*}
H^*(\domm_1,\DD_1)=H^*_{\textup{rel}}(M_1,E)\oplus H^*_{\textup{abs}}(M_1,E), \\
H^*(\domm_2,\DD_2)=H^*_{\textup{rel}}(M_2,E)\oplus H^*_{\textup{abs}}(M_2,E).
\end{align*}
This proves the statement of the theorem.
\end{proof}\ \\
\\[-7mm] Now we prove the result \eqref{split-cheeger} on comparison of the torsion norms, anticipated in the argumentation above. The proof uses ideas behind [V, Theorem 1.5].
\begin{prop}\label{anticipation}
\begin{align*}
\|\cdot \|^R_{\det H^*(C^*(X_1\#X_2,\rho))}=2^{\chi(N)/2}\|\cdot \|^{RS}_{\det H^*(M_1\#M_2,E)}. 
\end{align*}
\end{prop}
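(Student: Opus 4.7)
\emph{Proof plan.} The strategy is to leverage the Cheeger-M\"uller theorem on the closed manifold $M=M_1\cup_N M_2$ equipped with the compatible triangulation $X=X_1\cup_W X_2$, and to reduce the identity to a controlled discrepancy between the transmission complex $C^*(X_1\#X_2,\rho)$ and the ordinary combinatorial complex $C^*(X,\rho)$. This turns Proposition~\ref{anticipation} into a combination of (i) a spectral identification on the analytic side, (ii) Cheeger-M\"uller on a closed manifold with a unitary representation, and (iii) a purely combinatorial comparison on the cochain side.

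First, on the analytic side, Theorem~\ref{split-laplacian} and Corollary~\ref{split-cohomology} identify generalized eigenforms of the split Laplacian $\triangle_S$ with those of the de Rham Laplacian $\triangle$ on $M$, with identical eigenvalues and identical $L^2$ inner products on harmonic representatives. Consequently the scalar analytic torsions agree, and therefore
\[
\|\cdot\|^{RS}_{\det H^*(M_1\#M_2,E)}=\|\cdot\|^{RS}_{\det H^*(M,E)}.
\]
The Cheeger-M\"uller theorem on the closed manifold $M$ with the unitary representation $\rho$ then provides
\[
\|\cdot\|^R_{\det H^*(C^*(X,\rho))}=\|\cdot\|^{RS}_{\det H^*(M,E)},
\]
in the sign convention fixed in the preceding discussion.

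Next, I would compare the two combinatorial models. The restriction map $r:C^*(X,\rho)\to C^*(X_1\#X_2,\rho)$ sending $f$ to $(f|_{X_1},f|_{X_2})$ is an isomorphism of cochain complexes, since the transmission condition is automatic for a cochain defined on $X$; this induces a natural identification $H^*(C^*(X,\rho))\cong H^*(C^*(X_1\#X_2,\rho))$. The excision short exact sequence
\[
0\to C^*(X_1,W,\rho)\to C^*(X,\rho)\to C^*(X_2,\rho)\to 0
\]
splits \emph{orthogonally} at the cochain level, because the preferred basis of $X$-simplices partitions into those in $X_1\setminus W$ and those in $X_2$. The standard multiplicativity of combinatorial torsion under orthogonal short exact sequences (the analogue of Corollary~\ref{phi-tau-2} with no correction factor) yields
\[
\|\Psi_X(x\otimes y)\|^R_{\det H^*(C^*(X,\rho))}=\|x\|^R_{\det H^*(C^*(X_1,W,\rho))}\|y\|^R_{\det H^*(C^*(X_2,\rho))},
\]
where $\Psi_X$ is the canonical determinant-line isomorphism attached to the excision sequence. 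On the other hand, Theorem~\ref{comb-gluing}, derived in the excerpt from Lesch's gluing formula~\eqref{lesch1}, gives
\[
\|\Psi_c(x\otimes y)\|^R_{\det H^*(C^*(X_1\#X_2,\rho))}=2^{\chi(N)/2}\|x\|^R\|y\|^R.
\]
Both short exact sequences have the same end terms $C^*(X_1,W,\rho)$ and $C^*(X_2,\rho)$; under $r$ their middle terms and boundary maps coincide, and naturality of the connecting homomorphism identifies $\Psi_c$ with $\Psi_X$. Comparing the two norm identities on the same determinant line gives
\[
\|\cdot\|^R_{\det H^*(C^*(X_1\#X_2,\rho))}=2^{\chi(N)/2}\,\|\cdot\|^R_{\det H^*(C^*(X,\rho))}.
\]
Chaining this with the two earlier identities yields the proposition.

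The main obstacle is the clean identification of the canonical determinant-line isomorphisms $\Psi_c$ and $\Psi_X$ through the cochain-level isomorphism $r$, and the verification that the excision sequence for $C^*(X,\rho)$ splits orthogonally so that no correction factor arises on the closed side. Once those structural points are in hand, the factor $2^{\chi(N)/2}$ appearing on the split side is exactly the discrepancy between the non-orthogonal transmission inner product on $C^*(X_1\#X_2,\rho)$ and the orthogonal excision splitting of $C^*(X,\rho)$, as quantified by Lesch's combinatorial gluing formula.
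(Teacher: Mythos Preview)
Your argument is correct and the two ``obstacles'' you flag are in fact easy: the cochain isomorphism $r:C^*(X,\rho)\to C^*(X_1\#X_2,\rho)$, $f\mapsto(f|_{X_1},f|_{X_2})$, intertwines the excision sequence for $X$ with the sequence \eqref{A-complex} (identity on the outer terms), so $\Psi_c=r^*\circ\Psi_X$ by naturality of the connecting homomorphism; and the excision sequence for $C^*(X,\rho)$ splits orthogonally because the preferred simplex basis of $X$ partitions as $(X_1\setminus W)\sqcup X_2$, making both maps isometries. The chain of identifications you write down then gives the result.

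Your route is genuinely different from the paper's. The paper compares $C^*(X,\rho)$ and $C^*(X_1\#X_2,\rho)$ through short exact sequences with common outer terms $\bigoplus_j C^*(X_j,W,\rho_j)$ and $C^*(W,\rho_N)$ (rather than your excision sequence), shows that \emph{both} combinatorial sequences are orthogonal and hence induce Reidemeister-norm isometries $\phi^R$ and $\phi^R_\#$, and extracts the factor $2^{\chi(N)/2}$ on the \emph{analytic} side from a $\sqrt{2}$ discrepancy between the restriction maps $\sqrt{2}\,r$ and $\beta$ appearing in two de Rham commutative diagrams. In particular the paper's proof is self-contained: it does not invoke Theorem~\ref{comb-gluing} or Lesch's combinatorial gluing formula [L2]. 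Your approach instead imports the factor $2^{\chi(N)/2}$ from Theorem~\ref{comb-gluing}, which is legitimate (that theorem is established independently of Proposition~\ref{anticipation}) and makes for a shorter argument, at the price of relying on [L2] as an external input. What the paper's version buys is a direct, elementary explanation of where the power of $2$ comes from---the non-isometry of $r$ on cochains supported on $W$---without appealing to a general gluing law.
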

\begin{proof}
Consider the following short exact sequence of finite Hilbert complexes (recall that the Hilbert structures on the complexes were induced by the triangulation $X$ and the fixed volume on $\C^n$)
\begin{align*}
0 \to \bigoplus\limits_{j=1}^2C^*(X_j,W,\rho_j)\xrightarrow{\A}C^*(X_1\#X_2,\rho)\xrightarrow{\beta}C^*(W,\rho_N)\to 0,
\end{align*}
with $\A(\w_1\oplus \w_2)=(\w_1,\w_2)$ and $\beta(\w_1,\w_2)=\frac{1}{\sqrt{2}}(\w_1|_{C_*(\widetilde{W})}+\w_2|_{C_*(\widetilde{W})})$. Note further 
\begin{align*}
\theta: C^*(W,\rho_N)\to C^*(X_1\#X_2,\rho), \ \theta(\w)=\frac{1}{\sqrt{2}}(\w,\w)
\end{align*} 
is an isometry between $C^*(W,\rho_N)$ and Im$\theta$, where Im$\theta$ is moreover the orthogonal complement in $C^*(X_1\#X_2,\rho)$ to the image of $\A$. Furthermore $\beta \circ \theta=id$. Hence $\beta$ is an isometry between the orthogonal complement of its kernel and $C^*(W,\rho_N)$. Here a volume on $\C^n$ is fixed for all combinatorial complexes. 
\\[3mm] The map of complexes $\A$ is also an isometry onto its image and hence the induced identification $$\phi^R_{\#}:\det H^*(C^*(X_1\#X_2,\rho))\to \bigotimes\limits_{j=1}^2\det H^*(C^*(X_j,W,\rho_j))\otimes \det H^*(C^*(W,\rho_N))$$ is an isometry of combinatorial Reidemeister norms. Similarly we consider the next short exact sequence of finite complexes:
\begin{align*}
0 \to \bigoplus\limits_{j=1}^2C^*(X_j,W,\rho_j)\xrightarrow{\A}C^*(X,\rho)\xrightarrow{r} C^*(W,\rho_N)\to 0,
\end{align*}
where the third arow is the restriction as in \eqref{isomorphism}. By similar arguments as before the induced identification 
$$\phi^R: \det H^*(C^*(X,\rho))\to \bigotimes\limits_{j=1}^2\det H^*(C^*(X_j,W,\rho_j))\otimes \det H^*(C^*(W,\rho_N))$$ is an isometry of combinatorial Reidemeister norms. Now we consider the following commutative diagramms of short exact sequences.
\begin{align*}
\begin{array}{ccccccccc}
0 & \to & \bigoplus_{j=1}^2\Omega^*_{\textup{min}}(M_j,E) & \hookrightarrow & \Omega^*(M_1\#M_2,E) & \xrightarrow{\sqrt{2}\iota_N^*}&\Omega^*(N,E) & \to & 0 \\
 & & \downarrow R & & \downarrow R & & \downarrow R & & \\
0 & \to & \bigoplus\limits_{j=1}^2C^*(X_j,W,\rho_j)& \xrightarrow{\A}& C^*(X,\rho)& \xrightarrow{\sqrt{2}r} & C^*(W,\rho_N) & \to & 0,
\end{array}
\end{align*}
where $R$ denotes the natural de Rham integration quasi-isomorphisms. The second commutative diagramm is as follows:
\begin{align*}
\begin{array}{ccccccccc}
0 & \to & \bigoplus_{j=1}^2\Omega^*_{\textup{min}}(M_j,E) & \hookrightarrow & \Omega^*(M_1\#M_2,E) & \xrightarrow{\sqrt{2}\iota_N^*}&\Omega^*(N,E) & \to & 0 \\
 & & \downarrow R & & \downarrow R_{\#} & & \downarrow R & & \\
0 & \to & \bigoplus\limits_{j=1}^2C^*(X_j,W,\rho_j)& \xrightarrow{\A}& C^*(X_1\#X_2,\rho)& \xrightarrow{\beta} & C^*(W,\rho_N) & \to & 0,
\end{array}
\end{align*}
with the vertical maps as before given by the natural de Rham integration quasi-isomorphisms. Note that $R_{\#}$ is a quasi-isomorphism as well, which is clear from the five-lemma applied to the commutative diagramm of the associated long exact sequences.
\\[3mm] The lower sequences in both of the diagramms were discussed above. The upper short exact sequence in both diagramms induces the identification:
$$\phi^{RS}_{\#}:\det H^*(M_1\#M_2,E)\to \bigotimes\limits_{j=1}^2\det H^*_{\textup{rel}}(M_j,E)\otimes \det H^*(N,E).$$
By commutativity of the two diagramms we obtain:
\begin{align}\label{comparison1}
2^{\chi(N)/2}\phi^R\circ R&=R \circ \phi^{RS}_{\#}, \\ \label{comparison2}
\phi^R_{\#}\circ R_{\#}&=R\circ \phi^{RS}_{\#}.
\end{align}
Now let $x\in \det H^*(M_1\#M_2,E)$ be an arbitrary element, identified via Corollary \ref{split-cohomology} with $x\in \det H^*(M,E)$. We compute:
\begin{align*}
\|x\|^{RS}_{\det H^*(M_1\#M_2,E)}=\|x\|^{RS}_{\det H^*(M,E)}=\|R(x)\|^{R}_{\det H^*(X,\rho)}=\\ = \|\phi^R \circ R(x)\|= 2^{-\chi(N)/2}\|R \circ \phi^{RS}_{\#}(x)\|= \\ =
2^{-\chi(N)/2}\|\phi^R_{\#}\circ R_{\#}(x)\|=2^{-\chi(N)/2}\|R_{\#}(x)\|^{R}_{\det H^*(C^*(X_1\#X_2,\rho))},
\end{align*}
where we have put
$$\|\cdot \| := \|\cdot\|^R_{\det H^*(C^*(X_1,W,\rho_1))} \cdot \|\cdot \|^R_{\det H^*(C^*(X_2,W,\rho_2))} \cdot \|\cdot \|^R_{\det H^*(C^*(W,\rho_N))}.$$
The steps in the sequence of equalities need to be clarified. The first equation is due to Theorem \ref{split-laplacian} on the spectral equivalence of $\triangle_S$ and $\triangle$. The second equation is simply the Cheeger-M\"{u}ller Theorem for closed Riemannian manifolds. The third equation is a consequence of the fact that $\phi^R$ is an isometry with respect to the combinatorial Reidemeister norms. 
Now the fourth and the fifth equation are consequences of \eqref{comparison1} and \eqref{comparison2} respectively. Using in the last equation again the isometry $\phi^R_{\#}$ we obtain the result. The sequence of equalities proves in total:
$$\|x\|^{RS}_{\det H^*(M_1\#M_2,E)}=2^{-\chi(N)/2}\|R_{\#}(x)\|^{R}_{\det H^*(C^*(X_1\#X_2,\rho))}.$$ 
\end{proof} \ \\
\\[-7mm] Next we recall that by Theorem \ref{split-laplacian} the complexes $\Omega^*(M,E)\oplus \Omega^*(M,E)$ and $\Omega^*(M_1\#M_2,E)\oplus \Omega^*(M_1\#M_2,E)$ have spectrally equivalent Laplacians with identifyable eigenforms. This implies
\begin{align}
T^{RS}(\Omega^*(M,E)\oplus \Omega^*(M,E))=T^{RS}(\Omega^*(M_1\#M_2,E)\oplus \Omega^*(M_1\#M_2,E)),\nonumber \\ \label{split-cohomology2}
H^*(\Omega^*(M,E)\oplus \Omega^*(M,E))\cong
H^*(\Omega^*(M_1\#M_2,E)\oplus \Omega^*(M_1\#M_2,E)).
\end{align}
The identification \eqref{split-cohomology2} is in fact an isometry with respect to the natural Hilbert structures, since in both cases the Hilbert structure is induced by the $L^2-$scalar product on harmonic forms and the harmonic forms of both complexes coincide, see Theorem \ref{split-laplacian}. This implies
\begin{align*}
&\|\cdot\|^{RS}_{\det (H^*(M_1\#M_2,E)\oplus H^*(M_1\#M_2,E))}= \\  &\|\cdot\|^{RS}_{\det (H^*(M,E)\oplus H^*(M,E))}\equiv \|\cdot \|^{RS}_{\det H^*(\domm, \DD)}.
\end{align*}
Moreover under the identification \eqref{split-cohomology2} we can view the canonical isomorphism $\Omega$ as
$$\Omega : \det H^*(\domm_1,\DD_1)\otimes \det H^*(\domm_2,\DD_2)\rightarrow \det H^*(\domm,\DD).$$
Then we obtain as a corollary of Theorem \ref{rs-gluing}:
\begin{cor}\label{phase}
Denote by $\rho_{\textup{an}}(\D)$ and $\rho_{\textup{an}}(\D_j),j=1,2$ the refined analytic torsions on $M$ and $M_j,j=1,2$ respectively. There exists some $\phi \in [0,2\pi)$ such that 
$$\Omega (\rho_{\textup{an}}(\D_1)\otimes \rho_{\textup{an}}(\D_2))=e^{i\phi}2^{\chi(N)}\rho_{\textup{an}}(\D).$$
\end{cor}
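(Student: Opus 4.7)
The plan is to reduce the corollary to a Ray--Singer norm comparison. Since $\rho_{\textup{an}}(\D)$ and the image $\Omega(\rho_{\textup{an}}(\D_1)\otimes \rho_{\textup{an}}(\D_2))$ both lie in the one-dimensional complex line $\det H^*(\domm,\DD)$ (using Theorem \ref{split-laplacian} together with the identification \eqref{split-cohomology2} to view the target of $\Omega$ inside $\det H^*(\domm,\DD)$), and both are nonzero, there exists a unique $c\in\C\setminus\{0\}$ with $\Omega(\rho_{\textup{an}}(\D_1)\otimes \rho_{\textup{an}}(\D_2)) = c\cdot \rho_{\textup{an}}(\D)$. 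Writing $c = e^{i\phi}|c|$ then reduces the corollary to the real identity $|c| = 2^{\chi(N)}$.

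I would compute $|c|$ by applying Theorem \ref{rs-gluing} with $\gamma_j := \rho_{\textup{an}}(\D_j)$, which gives
\begin{equation*}
|c|\cdot \|\rho_{\textup{an}}(\D)\|^{RS} \;=\; 2^{\chi(N)}\,\|\rho_{\textup{an}}(\D_1)\|^{RS}\,\|\rho_{\textup{an}}(\D_2)\|^{RS},
\end{equation*}
and then show that each of the three refined analytic torsion elements has unit Ray--Singer norm. The latter is a consequence of Proposition \ref{rho-formula}: the exponential prefactors are pure phases of modulus one, as the eta-invariants of the self-adjoint operators $\B_{\textup{even}}, \B_{\textup{trivial}}$ and their $M_j$-analogues are real and the dimensions of their kernels are integers. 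Consequently $\|\rho_{\textup{an}}(\D)\|^{RS} = T^{RS}(\DD)^{-1}\|\rho_\G(M,E)\|^{RS}$, and the construction of $\rho_\G$ in [BV3] as an element built from $L^2$-orthonormal harmonic bases paired via the chirality $\GG$, combined with the decomposition of the Ray--Singer norm as $T^{RS}(\DD)$ times the natural $L^2$-metric on $\det H^*(\domm,\DD)$, yields $\|\rho_\G(M,E)\|^{RS} = T^{RS}(\DD)$, and the identical argument applies on $M_j$.

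Combining the two ingredients produces $|c|=2^{\chi(N)}$, so $c = e^{i\phi}\cdot 2^{\chi(N)}$ for some $\phi \in [0,2\pi)$, proving the corollary. The main obstacle in this approach is the unit-norm identity $\|\rho_\G\|^{RS} = T^{RS}$ for the doubled complexes; although standard in the Braverman--Kappeler framework on closed manifolds, its transfer to $(\domm,\DD)$ and $(\domm_j,\DD_j)$ on manifolds with boundary rests on the explicit construction in [BV3]. I note that the present norm-based argument leaves the phase $\phi$ undetermined; its explicit identification would require combining Proposition \ref{rho-formula}, Proposition \ref{splitting-refined}, and the eta-invariant splittings \eqref{splitting-eta}--\eqref{splitting-eta-trivial} of Section \ref{eta}, which is presumably the content of the subsequent gluing theorem.
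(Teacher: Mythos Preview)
Your approach is essentially the same as the paper's: both apply Theorem \ref{rs-gluing} to $\gamma_j=\rho_{\textup{an}}(\D_j)$ and reduce the statement to the unit Ray--Singer norm of the refined analytic torsion elements. The only difference is that the paper invokes [BV3, Theorem 5.2] directly for $\|\rho_{\textup{an}}(\D)\|^{RS}=\|\rho_{\textup{an}}(\D_j)\|^{RS}=1$, whereas you sketch this via Proposition \ref{rho-formula} and the $L^2$-normalisation of $\rho_\G$; your sketch is correct in spirit, but since the required identity is already stated and proved in [BV3, Theorem 5.2] there is no need to rederive it here.
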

\begin{proof}
Applying Theorem \ref{rs-gluing} to $\rho_{\textup{an}}(\D_1)\otimes \rho_{\textup{an}}(\D_2)$, we obtain with the identification \eqref{split-cohomology2} and [BV3, Theorem 5.2]:
\begin{align*}
&\|\Omega (\rho_{\textup{an}}(\D_1)\otimes \rho_{\textup{an}}(\D_2))\|^{RS}_{\det (H^*(\domm,\DD)}=
2^{\chi(N)} \times \\ \times & \|\rho_{\textup{an}}(\D_1)\|^{RS}_{\det H^*(\domm_1,\DD_1)}\|\rho_{\textup{an}}(\D_2)\|^{RS}_{\det H^*(\domm_2,\DD_2)}= 2^{\chi(N)}.
\end{align*}
On the other hand we have again by [BV3, Theorem 5.2]
$$\|\rho_{\textup{an}}(\D)\|^{RS}_{\det (H^*(\domm,\DD)}=1.$$
This proves the corollary.
\end{proof}\ \\
\\[-7mm] In order to establish a gluing formula it remains to identify this phase $\phi$ explicitly. Under the identification of Corollary \ref{split-cohomology} the refined torsion $\rho_{\G}(M_1\#M_2,E)$ corresponds to the refined torsion element $\rho_{\G}(M,E)$, as already encountered in \eqref{split-refined2}. Hence with Proposition \ref{splitting-refined} we can write 
$$\Omega (\rho_{\G}(M_1,E)\otimes \rho_{\G}(M_2,E))
= (-1)^{\textup{sign}} \tau (\mathcal{H})^2\rho_{\G}(M,E).$$
Consequently we obtain using the splitting formulas \eqref{splitting-eta} and \eqref{splitting-eta-trivial} for the eta-invariants and using Proposition \ref{rho-formula}
\begin{align}
\Omega (\rho_{\textup{an}}(\D_1)\otimes \rho_{\textup{an}}(\D_2))=\frac{T^{RS}(\domm,\DD)}{T^{RS}(\domm_1,\DD_1)T^{RS}(\domm_2,\DD_2)}\times \nonumber \\
\exp \left(-i\pi \cdot \textup{Err}\eta(\B_{\textup{even}})+i\pi \cdot \textup{rank}(E)\textup{Err}\eta(\B_{\textup{trivial}})\right)\times \nonumber \\
(-1)^{\textup{sign}}\tau (\mathcal{H})^2 \cdot \rho_{\textup{an}}(\D),\label{relation}
\end{align}
where we have put 
\begin{align}\label{error1}
\textup{Err}\eta(\B_{\textup{even}}):=&\tau_{\mu}(I-P_1,P,P_1)+\\ \nonumber +&\frac{m-1}{2}\left(\dim \ker \B^1_{\textup{even}}+\dim \ker \B^2_{\textup{even}}-\dim \ker \B_{\textup{even}}\right), \\ \label{error2}
\textup{Err}\eta(\B_{\textup{trivial}}):=&\tau_{\mu}(I-P_{1, \textup{trivial}},P_{\textup{trivial}},P_{1, \textup{trivial}})+\\ \nonumber +&\frac{m}{2}\left(\dim \ker \B^1_{\textup{trivial}}+\dim \ker \B^2_{\textup{trivial}}-\dim \ker \B_{\textup{trivial}}\right).
\end{align}
Comparing this relation with the statement of Corollary \ref{phase} we obtain for the phase $\phi$ in Corollary \ref{phase}:
$$\phi=\pi \left[ -\textup{Err}\eta(\B_{\textup{even}})+ \textup{rank}(E)\textup{Err}\eta(\B_{\textup{trivial}})+\textup{sign}\right].$$
Due to the definition of the refined analytic torsion in [BV3, (5.17)], it makes sense to reduce the phase $\phi$ modulo $\pi \textup{rank}(E)\Z$. 
\begin{lemma}
$$\phi\equiv \pi \left[ \textup{sign}-\textup{Err}\eta(\B_{\textup{even}})\right]\ \textup{mod}\ \pi \textup{rank}(E)\Z.$$
\end{lemma}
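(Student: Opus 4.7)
The plan is to observe that, modulo $\pi\,\textup{rank}(E)\Z$, the explicit formula
$$\phi=\pi \left[ -\textup{Err}\eta(\B_{\textup{even}})+ \textup{rank}(E)\textup{Err}\eta(\B_{\textup{trivial}})+\textup{sign}\right]$$
displayed immediately above the lemma reduces the claim to the integrality statement $\textup{Err}\eta(\B_{\textup{trivial}})\in \Z$. Since the Maslov triple index $\tau_\mu$ is integer-valued (as noted just after \eqref{splitting-eta-general}), it will suffice to prove that
$$\frac{m}{2}\bigl(\dim \ker \B^1_{\textup{trivial}}+\dim \ker \B^2_{\textup{trivial}}-\dim \ker \B_{\textup{trivial}}\bigr)\in \Z,$$
and since $m$ is odd this reduces to showing that the bracketed combination of kernel dimensions is even.

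The first step is to identify each kernel via Hodge theory applied to the doubled Fredholm complexes $(\domm_j,\DD_j)$ and $(\domm,\DD)$ in the trivial line bundle case, which gives
$$\dim \ker \B^j_{\textup{trivial}}=\dim H^{\textup{even}}_{\textup{rel}}(M_j,\C)+\dim H^{\textup{even}}_{\textup{abs}}(M_j,\C),\quad \dim \ker \B_{\textup{trivial}}=2\dim H^{\textup{even}}(M,\C).$$
Using Poincar\'e-Lefschetz duality $H^k_{\textup{rel}}(M_j,\C)\cong H^{m-k}_{\textup{abs}}(M_j,\C)$ together with the oddness of $m$, the first expression collapses to the total (ungraded) dimension $\dim H^*_{\textup{rel}}(M_j,\C)$; similarly Poincar\'e duality on the closed odd-dimensional $M$ rewrites the second as $\dim H^*(M,\C)$.

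The required parity is then extracted from the long exact sequence $\mathcal{H}$ of Section \ref{cohom-algebra}. Since the alternating sum of dimensions in an exact sequence vanishes and $-1\equiv 1\pmod 2$, one immediately obtains
$$\dim H^*_{\textup{rel}}(M_1,\C)+\dim H^*(M,\C)+\dim H^*_{\textup{abs}}(M_2,\C)\equiv 0\pmod 2,$$
and substitution of $\dim H^*_{\textup{abs}}(M_2,\C)=\dim H^*_{\textup{rel}}(M_2,\C)$ via Poincar\'e-Lefschetz duality yields
$$\dim H^*_{\textup{rel}}(M_1,\C)+\dim H^*_{\textup{rel}}(M_2,\C)\equiv \dim H^*(M,\C)\pmod 2,$$
which is precisely the parity statement for $\dim \ker \B^1_{\textup{trivial}}+\dim \ker \B^2_{\textup{trivial}}-\dim \ker \B_{\textup{trivial}}$ needed to finish.

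I expect the main obstacle to be the careful identification of $\dim \ker \B^j_{\textup{trivial}}$ with the ungraded dimension $\dim H^*_{\textup{rel}}(M_j,\C)$, which requires combining Hodge theory for the doubled complex with Poincar\'e-Lefschetz duality and the oddness of $m$; once the kernels are in this form, the remainder is elementary parity bookkeeping through $\mathcal{H}$.
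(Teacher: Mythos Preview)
Your proposal is correct and follows essentially the same strategy as the paper: reduce to $\textup{Err}\eta(\B_{\textup{trivial}})\in\Z$, discard the integer Maslov term, and then combine Hodge theory, Poincar\'e(-Lefschetz) duality, and exactness of the long exact sequence $\mathcal{H}$ to control the remaining kernel term.

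The route differs slightly in the middle step. The paper invokes [BV3, Lemma 4.8] to rewrite $\frac{m}{2}\dim\ker\B^j_{\textup{trivial}}$ modulo $\Z$ as the alternating $k$-weighted sum $\frac{1}{2}\sum_k(-1)^{k+1}k\,(\dim\ker\triangle^j_{k,\textup{rel}}+\dim\ker\triangle^j_{k,\textup{abs}})$, and then uses duality plus exactness of $\mathcal{H}$ to obtain the exact Euler-characteristic identity $\chi_{\textup{rel}}(M_1)+\chi_{\textup{abs}}(M_2)-\chi(M)=0$, so that the expression vanishes on the nose. You instead identify $\dim\ker\B^j_{\textup{trivial}}$ directly with $\dim H^*_{\textup{rel}}(M_j,\C)$ via duality and oddness of $m$, and then extract only the parity statement from $\mathcal{H}$. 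Your version is more self-contained (no appeal to [BV3, Lemma 4.8]) but yields only evenness rather than exact vanishing; either suffices for the lemma.
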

\begin{proof}
We need to verify $$\textup{Err}\eta(\B_{\textup{trivial}})\equiv 0 \ \textup{mod}\ \Z.$$
Denote the Laplace operators of the complexes $(\Omega^*_{\textup{min/max}}(M_j), \D_{j,\textup{trivial}} )$ by $\triangle^j_{\textup{rel/abs}}$, respectively. Let $\triangle$ denote the Laplacian of the complex $(\Omega^*(M), \D_{\textup{trivial}} )$. We have by construction 
\begin{align*}
&\B(\D_{\textup{trivial}})^2=\triangle\oplus \triangle, \\
&\B(\D_{j, \textup{trivial}})^2=\triangle^j_{\textup{rel}}\oplus \triangle^j_{\textup{abs}}.
\end{align*}
Since the Maslov-triple index $\tau_{\mu}$ is integer-valued, we obtain via [BV3, Lemma 4.8] the following mod $\Z$ calculation:
\begin{align*}
\textup{Err}\eta(\B_{\textup{trivial}})=\frac{1}{2}&\sum_{k=0}^m(-1)^{k+1}\cdot k \cdot (\dim \ker \triangle^1_{k,\textup{rel}}+ \dim \ker \triangle^1_{k,\textup{abs}})+ \\ +
\frac{1}{2}&\sum_{k=0}^m(-1)^{k+1}\cdot k \cdot (\dim \ker \triangle^2_{k,\textup{rel}}+ \dim \ker \triangle^2_{k,\textup{abs}})- \\ -
\frac{1}{2}&\sum_{k=0}^m(-1)^{k+1}\cdot k \cdot (\dim \ker \triangle_{k}+ \dim \ker \triangle^1_{k}).
\end{align*}
The Poincare duality implies:
\begin{align*}
\dim \ker \triangle^j_{k,\textup{rel}}&= \dim \ker \triangle^j_{m-k,\textup{abs}}, \\
\dim \ker \triangle_{k}&= \dim \ker \triangle_{m-k}.
\end{align*}
Hence we compute further modulo $\Z$
\begin{align*}
\textup{Err}\eta(\B_{\textup{trivial}})=\frac{m}{2}\sum_{k=0}^m(-1)^k \dim \ker \triangle^1_{k,\textup{rel}}+ \\ +
\frac{m}{2}\sum_{k=0}^m(-1)^k \dim \ker \triangle^2_{k,\textup{abs}}- \frac{m}{2}\sum_{k=0}^m(-1)^k \dim \ker \triangle_{k}.
\end{align*}
Finally, exactness of the long exact sequence $\mathcal{H}$ in \eqref{LES-H} (in the setup of a trivial line bundle) implies $\textup{Err}\eta(\B_{\textup{trivial}})\equiv 0 \ \textup{mod} \ \Z$.
\end{proof} \ \\
\\[-7mm] We finally arrive at the following central result: a gluing formula for refined analytic torsion. 
\begin{thm}\label{gluing-final}\textup{[Gluing formula for Refined Analytic Torsion]}\\
Let $M=M_1\cup_{N}M_2$ be an odd-dimensional oriented closed Riemannian split-manifold where $M_j, j=1,2$ are compact bounded Riemannian manifolds with $\partial M_j=N$ and orientation induced from $M$. Denote by $(E,\D, h^E)$ a complex flat vector bundle induced by an unitary representation $\rho:\pi_1(M)\to U(n,\C)$. Assume product structure for the metrics and the vector bundle. Set:
\begin{align*}
&(\domm_j,\DD_j):=(\dom_{j,\min}, \D_{j, \min}) \oplus (\dom_{j,\max}, \D_{j, \max}), \ j=1,2, \\
&(\domm, \DD):=(\Omega^*(M,E),\D)\oplus (\Omega^*(M,E),\D).
\end{align*}
The canonical isomorphism
$$\Omega : \det H^*(\domm_1,\DD_1)\otimes \det H^*(\domm_2,\DD_2)\rightarrow \det H^*(\domm, \DD)$$ is induced by the long exact sequences on cohomologies 
\begin{align*} &\mathcal{H}: \ 
...H^k_{\textup{rel}}(M_1,E)\rightarrow H^k(M,E)\rightarrow H^k_{\textup{abs}}(M_2,E)\rightarrow H^{k+1}_{\textup{rel}}(M_1,E)... \\  &\mathcal{H'}\!: ...H^k_{\textup{rel}}(M_2,E)\rightarrow H^k(M,E)\rightarrow H^k_{\textup{abs}}(M_1,E)\rightarrow H^{k+1}_{\textup{rel}}(M_2,E)...\\
\end{align*}
and fusion isomorphisms. The isomorphism $\Omega$ is linear, hence well-defined on equivalence classes modulo multiplication by $\exp [i\pi \textup{rk}E]$. Then the gluing formula for refined analytic torsion in [BV3, (5.17)] is given as follows:
\begin{align*}
&\Omega (\rho_{\textup{an}}(M_1,E)\otimes \rho_{\textup{an}}(M_2,E))=K(M,M_1,M_2,\rho)\cdot\rho_{\textup{an}}(M,E), \\
&K(M,M_1,M_2,\rho):= 2^{\chi(N)}
\exp (i\phi),\\
&\phi:= \pi (\textup{sign} - \textup{Err}\eta(\B_{\textup{even}})).
\end{align*}
The term $\textup{Err}\eta(\B_{\textup{even}})$ is an error term in the gluing formula for eta-invariants 
\begin{align*}
\textup{Err}\eta(\B_{\textup{even}}):=&\tau_{\mu}(I-P_1,P,P_1)+\\  +&\frac{m-1}{2}\left(\dim \ker \B^1_{\textup{even}}+\dim \ker \B^2_{\textup{even}}-\dim \ker \B_{\textup{even}}\right),
\end{align*}
where $\B$ and $\B^j, j=1,2$ are the odd-signature operators associated to the Fredholm complexes $(\domm, \DD)$ and $(\domm_j,\DD_j),j=1,2$ respectively. Further $P,P_1$ denote the boundary conditions and the Calderon projector associated to $\B^1_{\textup{even}}$, respectively. $\tau_{\mu}$ is the Maslov triple index.
\\[3mm] The $\textup{sign}\in \{\pm 1\}$ is a combinatorial sign, explicitly defined in \eqref{sign}. 
\end{thm}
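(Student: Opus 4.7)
The plan is to assemble Theorem \ref{gluing-final} directly from the preceding sections, since essentially all analytic, algebraic and combinatorial ingredients are already in place and only the bookkeeping of phases and conventions remains. First I would substitute Proposition \ref{rho-formula} into the left-hand side $\Omega(\rho_{\textup{an}}(\D_1)\otimes\rho_{\textup{an}}(\D_2))$, which expresses each refined analytic torsion as the product of an inverse scalar analytic torsion, an exponential of eta-invariants, and the refined torsion element in the determinant line. Next I would apply the splitting formulas \eqref{splitting-eta} and \eqref{splitting-eta-trivial} to the eta-invariants of $\B_{\textup{even}}$ and $\B_{\textup{trivial}}$, and the splitting formula of Proposition \ref{splitting-refined} to the refined torsion elements. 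Collecting terms produces precisely the relation \eqref{relation} already displayed in the text, namely
$$\Omega(\rho_{\textup{an}}(\D_1)\otimes\rho_{\textup{an}}(\D_2))=\frac{T^{RS}(\DD)\,\tau(\mathcal{H})^2}{T^{RS}(\DD_1)\,T^{RS}(\DD_2)}\cdot(-1)^{\textup{sign}}\cdot e^{i\phi_0}\cdot\rho_{\textup{an}}(\D),$$
with the purely imaginary exponent $\phi_0=\pi\bigl(-\textup{Err}\eta(\B_{\textup{even}})+\textup{rk}(E)\,\textup{Err}\eta(\B_{\textup{trivial}})\bigr)$.

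Now I would pair this with Corollary \ref{phase}, whose proof invokes the Ray-Singer gluing Theorem \ref{rs-gluing} together with the unit-norm property of refined analytic torsion from [BV3, Theorem 5.2], and which independently pins down the overall scalar as $e^{i\phi}\cdot 2^{\chi(N)}$ for some real $\phi\in[0,2\pi)$. Since $\tau(\mathcal{H})\in\R^{+}$ and the eta-invariants and Maslov triple index are real, taking absolute values of the two expressions gives as an a posteriori byproduct the scalar splitting formula
$$\frac{T^{RS}(\DD)\,\tau(\mathcal{H})^2}{T^{RS}(\DD_1)\,T^{RS}(\DD_2)}=2^{\chi(N)},$$
which refines [Lee, Theorem 1.7 (2)]. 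Extracting the argument yields $\phi\equiv\pi\,\textup{sign}+\phi_0\pmod{2\pi}$. Since by the convention [BV3, (5.17)] the refined analytic torsion is only determined up to multiplication by $\exp(i\pi\,\textup{rk}\,E)$, I would then reduce $\phi$ modulo $\pi\,\textup{rk}(E)\Z$; the lemma stated just before Theorem \ref{gluing-final} shows $\textup{Err}\eta(\B_{\textup{trivial}})\in\Z$, so the trivial-bundle contribution in $\phi_0$ drops out and one arrives at $\phi\equiv\pi(\textup{sign}-\textup{Err}\eta(\B_{\textup{even}}))$, which is the asserted formula with $K(M,M_1,M_2,\rho)=2^{\chi(N)}e^{i\phi}$.

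The delicate point is not any single computational step but the consistency of sign and normalization conventions threaded across the entire argument: the sign $\textup{sign}$ of \eqref{sign} is an aggregate of the fusion-isomorphism exponent $\mathcal{M}$, the Knudsen-Mumford reordering sign, and the internal sign $R$ of the refined torsion element from [BK2, (4.2)]; these must align exactly with the $(-1)^\nu$ factor produced by the canonical identifications $\Phi,\Phi'$ of Section \ref{canonical} in order for the real factor $\tau(\mathcal{H})^2$ to cancel cleanly against the scalar-torsion ratio dictated by Theorem \ref{rs-gluing}. Once this bookkeeping is checked, the reduction modulo $\pi\,\textup{rk}(E)\Z$ is immediate and the proof of Theorem \ref{gluing-final} is complete.
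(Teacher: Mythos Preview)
Your proposal is correct and follows essentially the same route as the paper: assemble relation \eqref{relation} from Proposition \ref{rho-formula}, the eta-splitting formulas \eqref{splitting-eta}, \eqref{splitting-eta-trivial}, and Proposition \ref{splitting-refined}; compare it against Corollary \ref{phase} to read off the phase; then invoke the lemma preceding the theorem to reduce modulo $\pi\,\textup{rk}(E)\Z$. The scalar-torsion identity you extract by taking absolute values is exactly what the paper records as the corollary immediately after Theorem \ref{gluing-final}.
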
 \ 
\\ \
\\[-12mm]
\begin{cor}\textup{[Gluing formula for scalar analytic torsion]}
$$
\frac{T^{RS}(M,E)}{T^{RS}_{\textup{rel}}(M_1,E)\cdot T^{RS}_{\textup{abs}}(M_2,E)}=\tau (\mathcal{H})^{-1}\cdot 2^{\chi(N)/2}.$$
\end{cor}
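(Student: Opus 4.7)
The plan is to derive the corollary as a consequence of the Ray-Singer torsion norm splitting identity \eqref{a-norms1} established inside the proof of Theorem \ref{rs-gluing}, combined with the algebraic description of the canonical isomorphism $\Psi$ given by Corollary \ref{phi-tau-2}. The key observation is that \eqref{a-norms1} already involves precisely the asymmetric combination $T^{RS}_{\textup{rel}}(M_1,E)\cdot T^{RS}_{\textup{abs}}(M_2,E)$ appearing in the denominator of the stated formula. In particular one does not need the full canonical map $\Omega$ used in the gluing formula for refined analytic torsion (which employs both $\Psi$ and $\Psi'$); this is why the corollary carries $\tau(\mathcal{H})^{-1}$ and $2^{\chi(N)/2}$ rather than the squared version one would read off directly from Theorem \ref{gluing-final}.

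First I would choose, separately on $M_1$, $M_2$ and on $M_1\#M_2$, orthonormal bases of $H^*_{\textup{rel}}(M_1,E)$, $H^*_{\textup{abs}}(M_2,E)$ and $H^*(M_1\#M_2,E)$ with respect to the $L^2$-pairing on harmonic representatives, respecting the orthogonal decompositions \eqref{orth-dec}. Lemma \ref{real-determinant} guarantees such bases exist. Let $\omega_1 \in \det H^*_{\textup{rel}}(M_1,E)$, $\omega_2 \in \det H^*_{\textup{abs}}(M_2,E)$ and $\omega \in \det H^*(M_1\#M_2,E)$ denote the induced $L^2$-unit volume elements. Corollary \ref{phi-tau-2} then reads $\Psi(\omega_1 \otimes \omega_2) = (-1)^{\nu}\,\tau(\mathcal{H})\,\omega$.

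Taking the Ray-Singer torsion norm on both sides and invoking \eqref{a-norms1} produces the positive real identity $\tau(\mathcal{H})\,\|\omega\|^{RS} = 2^{\chi(N)/2}\,\|\omega_1\|^{RS}\,\|\omega_2\|^{RS}$. At this point I would identify the Ray-Singer norms of these $L^2$-unit volume elements with the associated scalar analytic torsions using the paper's sign convention, pinned down by $\|\rho_{\textup{an}}(\D)\|^{RS} = 1$ from [BV3, Theorem 5.2] combined with Proposition \ref{rho-formula}. Consequently $\|\omega_1\|^{RS}$ equals $T^{RS}_{\textup{rel}}(M_1,E)$, $\|\omega_2\|^{RS}$ equals $T^{RS}_{\textup{abs}}(M_2,E)$, and $\|\omega\|^{RS}$ equals $T^{RS}(M_1\#M_2,E)$; the last quantity coincides with $T^{RS}(M,E)$ by the spectral equivalence of Theorem \ref{split-laplacian}. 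Substituting and rearranging then yields the stated formula.

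The main point requiring care is the convention-level identification of the Ray-Singer norm of an $L^2$-unit volume element on a determinant line with the associated scalar analytic torsion; once the sign is fixed, the rest is routine substitution. As an internal consistency check, one may apply the same argument to the long exact sequence $\mathcal{H}'$, using the equality $\tau(\mathcal{H}) = \tau(\mathcal{H}')$ from Theorem \ref{phi-tau}, which produces the symmetric identity $T^{RS}(M,E)/[T^{RS}_{\textup{rel}}(M_2,E) T^{RS}_{\textup{abs}}(M_1,E)] = \tau(\mathcal{H})^{-1}\cdot 2^{\chi(N)/2}$; combining the two versions recovers as a byproduct the Poincar\'e-duality-like relation $T^{RS}_{\textup{rel}}(M_1,E) T^{RS}_{\textup{abs}}(M_2,E) = T^{RS}_{\textup{rel}}(M_2,E) T^{RS}_{\textup{abs}}(M_1,E)$, consistent with what one would obtain from Corollary \ref{phase} after taking square roots.
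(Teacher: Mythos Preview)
Your argument is correct and takes a genuinely more direct route than the paper. The paper deduces the corollary by comparing Corollary~\ref{phase} with the explicit relation \eqref{relation} coming from Proposition~\ref{splitting-refined}; this produces the \emph{squared} identity
\[
\frac{T^{RS}(\domm,\DD)}{T^{RS}(\domm_1,\DD_1)\,T^{RS}(\domm_2,\DD_2)}=\tau(\mathcal{H})^{-2}\cdot 2^{\chi(N)}
\]
for the doubled complexes, which is then reduced via Poincar\'e duality on odd-dimensional manifolds with boundary (i.e.\ $T^{RS}_{\textup{rel}}(M_j,E)=T^{RS}_{\textup{abs}}(M_j,E)$) and a square root. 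You instead feed $L^2$-unit volume elements directly into the single-sided identity \eqref{a-norms1} for $\Psi$ and read off the factor $\tau(\mathcal{H})$ from Corollary~\ref{phi-tau-2}, so no doubling, no Poincar\'e duality, and no square root are needed. Your route is the cleaner one for this particular statement; the paper's derivation has the virtue of falling out automatically as a byproduct of the refined-torsion gluing formula without a separate computation.

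One small point of presentation: your justification of $\|\omega_1\|^{RS}=T^{RS}_{\textup{rel}}(M_1,E)$ via Proposition~\ref{rho-formula} and $[$BV3, Theorem~5.2$]$ literally only pins down the product identity $\|\omega_1\|^{RS}\cdot\|\omega_1'\|^{RS}=T^{RS}_{\textup{rel}}(M_1,E)\cdot T^{RS}_{\textup{abs}}(M_1,E)$ on the doubled line; splitting this into the two single-factor identities still uses Poincar\'e duality (or, more simply, one can just invoke the definition of the Ray-Singer norm on a single determinant line in the chosen sign convention). This is a cosmetic issue, not a gap.
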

\begin{proof}
Comparison of the statement of Corollary \ref{phase} with the relation \eqref{relation} we obtain along the result of Theorem \ref{gluing-final} the following formula as a byproduct:
$$\frac{T^{RS}(\domm,\DD)}{T^{RS}(\domm_1,\DD_1)T^{RS}(\domm_2,\DD_2)}=\tau (\mathcal{H})^{-2}\cdot 2^{\chi(N)}.$$
By construction and the Poincare duality on odd-dimensional manifolds with or without boundary \eqref{PD-laplace} we know
\begin{align*}
T^{RS}(\domm_1,\DD_1)=T^{RS}_{\textup{rel}}(M_1,E)\cdot T^{RS}_{\textup{abs}}(M_1,E)= T^{RS}_{\textup{rel}}(M_1,E)^2, \\
T^{RS}(\domm_2,\DD_2)=T^{RS}_{\textup{abs}}(M_2,E)\cdot T^{RS}_{\textup{rel}}(M_2,E)= T^{RS}_{\textup{abs}}(M_2,E)^2, \\
T^{RS}(\domm,\DD)=T^{RS}(M,E)^2.
\end{align*}
Taking now square-roots gives the result.
\end{proof}\ \\
\\[-7mm] Note that this result refines the result of [Lee, Theorem 1.7 (2)] on the adiabatic decomposition of the scalar analytic torsion.
\\[3mm] Note finally that in view of [BV3, Theorem 5.3] the gluing formula in Theorem \ref{gluing-final} can be viewed as a gluing formula for refined analytic torsion in the version of Braverman-Kappeler.

\section{References}\
\\[-1mm] [Ag] S. Agmon \emph{"On the eigenfunctions and on the eigenvalues of general elliptic boundary value problems"} Comm. Pure Appl. Math., vol. 15, 119-147 (1962)
\\[3mm] [APS] M. F. Atiyah, V.K. Patodi, I.M. Singer \emph{"Spectral asymmetry and Riemannian geometry I"}, Math. Proc. Camb. Phil. Soc. 77, 43-69 (1975)
\\[3mm] [BFK] D. Burghelea, L. Friedlander, T. Kappeler \emph{"Mayer-Vietoris type formula for determinants of elliptic differential operators"}, Journal of Funct. Anal. 107, 34-65 (1992)
\\[3mm] [BGV] N. Berline, E. Getzler, M. Vergne \emph{"Heat kernels and Dirac operators"}, Springer-Verlag, New Jork (1992)
\\[3mm] [BK1] M. Braverman and T. Kappeler \emph{"Refined analytic torsion"}, arXiv:math.DG/ 0505537v2, to appear in J. of. Diff. Geom.
\\[3mm] [BK2] M. Braverman and T. Kappeler \emph{"Refined Analytic Torsion as an Element of the Determinant Line"} , arXiv:math.GT/0510532v4, To appear in Geometry \& Topology 
\\[3mm] [BL1] J. Br\"{u}ning, M. Lesch \emph{"Hilbert complexes"}, J. Funct. Anal. 108, 88-132 (1992)
\\[3mm] [BL3] J. Br\"{u}ning, M. Lesch \emph{"On boundary value problems for Dirac type operators. I. Regularity and self-adjointness"}, arXiv:math/9905181v2 [math.FA] (1999)
\\[3mm] [BLZ] B. Booss, M. Lesch, C. Zhu \emph{"The Calderon Projection: New Definition and Applications"}, arXiv:math.DG/0803.4160v1 (2008)
\\[3mm] [BM] J. Br\"{u}ning, X. Ma \emph{"An anomaly-formula for Ray-Singer metrics on manifolds with boundary"}, Geom. Funct. An. 16, No. 4, 767-837, (2006)
\\[3mm] [BS] J. Br\"{u}ning, R. Seeley \emph{"An index theorem for first order regular singular operators"}, Amer. J. Math 110, 659-714, (1988)
\\[3mm] [BW] B. Booss, K. Wojchiechovski \emph{"Elliptic boundary problemsfor Dirac Operators"}, Birkh\"{a}user, Basel (1993)
\\[3mm] [BV1] B. Vertman \emph{"Functional Determinants for Regular-Singular Laplace-type Operators"}, preprint, arXiv:0808.0443 (2008) 
\\[3mm] [BV3] B. Vertman \emph{"Refined Analytic Torsion on Manifolds with Boundary"}, preprint, arXiv:0808.0416 (2008) 
\\[3mm] [BZ] J. -M. Bismut and W. Zhang \emph{"Milnor and Ray-Singer metrics on the equivariant determinant of a flat vector bundle"}, Geom. and Funct. Analysis 4, No.2, 136-212 (1994)
\\[3mm] [BZ1] J. -M. Bismut and W. Zhang \emph{"An extension of a Theorem by Cheeger and M\"{u}ller"}, Asterisque, 205, SMF, Paris (1992)
\\[3mm] [Ch] J. Cheeger \emph{"Analytic Torsion and Reidemeister Torsion"}, Proc. Nat. Acad. Sci. USA 74 (1977), 2651-2654
\\[3mm] [Fa] M. Farber \emph{Combinatorial invariants computing the Ray-Singer analytic torsion}, arXiv:dg-ga/ 9606014v1 (1996)
\\[3mm] [Gi] P.B. Gilkey \emph{"Invariance Theory, the Heat-equation and the Atiyah-Singer Index Theorem"}, Second Edition, CRC Press (1995)
\\[3mm] [Gi2] P.B. Gilkey \emph{"The eta-invariant and secondary characteristic classes of locally flat bundles"}, Algebraic and Differential topology $-$ global differential geometry, Teubner-Texte zur Math., vol. 70, Teubner, Leipzig, 49-87 (1984)
\\[3mm] [GS1] P. Gilkey, L. Smith \emph{"The eta-invariant for a class of elliptic boundary value problems"}, Comm. Pure Appl. Math. Vol.36, 85-131 (1983)
\\[3mm] [GS2] P. Gilkey, L. Smith \emph{"The twisted index problem for manifolds with boundary"}, J. Diff. Geom. 18, 393-444 (1983)
\\[3mm] [K] T. Kato \emph{"Perturbation Theory for Linear Operators"}, Die Grundlehren der math. Wiss. Volume 132, Springer (1966)
\\[3mm] [KL] P. Kirk and M. Lesch \emph{"The $\eta$-invariant, Maslov index and spectral flow for Dirac type operators on manifolds with boundary"}, Forum Math. 16, 553-629 (2004)
\\[3mm] [KM] F.F: Knudsen, D. Mumford \emph{"The projectivity of the moduli space of stable curves. I. Preliminaries on 'det' and 'Div'"}, Math. Scand. 39, no1, 19-55 (1976)
\\[3mm] [KN] S. Kobayashi, K. Nomizu \emph{"Foundations of differential geometry"}, Volume I, Interscience Publishers (1963)
\\[3mm] [L2] M. Lesch \emph{"Gluing formula in cohomological algebra"}, unpublished notes.
\\[3mm] [Lee] Y. Lee \emph{"Burghelea-Friedlander-Kappeler's gluing formula for the zeta-determinant and its application to the adiabatic decomposition of the zeta-determinant and the analytic torsion"}, Trans. Amer. Math. Soc., Vol. 355, 10, 4093-4110 (2003)
\\[3mm] [LR] J. Lott, M. Rothenberg \emph{"Analytic torsion for group actions"} J. Diff. Geom. 34, 431-481 (1991)
\\[3mm] [L\"{u}] W. L\"{u}ck \emph{"Analytic and topological torsion for manifolds with boundary and symmetry"}, J. Diff. Geom. 37, 263-322, (1993) 
\\[3mm] [Mi] J. Milnor \emph{"Whitehead torsion"}, Bull. Ams. 72, 358-426 (1966)
\\[3mm] [Mu] W. M\"{u}ller \emph{"Analytic torsion and R-torsion for unimodular representations"} J. Amer. Math. Soc., Volume 6, Number 3, 721-753 (1993) 
\\[3mm] [Mu1] W. M\"{u}ller \emph{"Analytic Torsion and R-Torsion of Riemannian manifolds"} Adv. Math. 28, 233-305 (1978)
\\[3mm] [Mun] J. Mukres \emph{"Elementary differential topology"} Ann. of Math. Stud. vol. 54, Princeton Univ. Press, Princeton, NJ (1961)
\\[3mm] [MZ1] X. Ma, W. Zhang \emph{"$\eta -$invariant and flat vector bundles I"}, Chinese Ann. Math. 27B, 67-72 (2006)
\\[3mm] [MZ2] X. Ma, W. Zhang \emph{"$\eta -$invariant and flat vector bundles II"}, Nankai Tracts in Mathematics. Vol. 11. World Scientific, 335-350, (2006)
\\[3mm] [Nic] L.I. Nicolaescu \emph{"The Reidemeister torsion of 3-manifolds"}, de Gruyter Studies in Mathematics, vol. 30, Berlin (2003)
\\[3mm] [Re1] K. Reidemeister \emph{"Die Klassifikation der Linsenr\"{a}ume"}, Abhandl. Math. Sem. Hamburg 11, 102-109 (1935)
\\[3mm] [Re2] K. Reidemeister \emph{"\"{U}berdeckungen von Komplexen"}, J. reine angew. Math. 173, 164-173 (1935)
\\[3mm] [ReS] M. Reed, B. Simon \emph{"Methods of Mathematical Physics"}, Vol. II, Acad.N.J. (1979)
\\[3mm] [Rh] G. de Rham \emph{"Complexes a automorphismes et homeomorphie differentiable"}, Ann. Inst. Fourier 2, 51-67 (1950)
\\[3mm] [RS] D.B. Ray and I.M. Singer \emph{"R-Torsion and the Laplacian on Riemannian manifolds"}, Adv. Math. 7, 145-210 (1971)
\\[3mm] [Ru] W. Rudin \emph{"Functional Analysis"}, Second Edition, Mc. Graw-Hill, Inc. Intern. Series in pure and appl. math. (1991)
\\[3mm] [RH] Rung-Tzung Huang \emph{"Refined Analytic Torsion: Comparison theorems and examples"}, math.DG/0602231v2 
\\[3mm] [Se1] R. Seeley \emph{"The resolvent of an elliptic boundary problem"} Amer. J. Math. 91 889-920 (1969)
\\[3mm] [Se2] R. Seeley \emph{"An extension of the trace associated with elliptic boundary problem"} Amer. J. Math. 91 963-983 (1969)
\\[3mm] [Sh] M.A. Shubin \emph{"Pseudodifferential operators and Spectral Theory"}, English translation: Springer, Berlin (1086) 
\\[3mm] [Tu1] V. G. Turaev \emph{"Euler structures, non-singular vector-fields and torsion of Reidemeister type"}, English Translation: Math. USSR Izvestia 34:3 627-662 (1990)
\\[3mm] [Tu2] V. G. Turaev \emph{"Torsion invariants of Spin$^c$ structures on three-manifolds"}, Math. Research Letters 4:5 679-695 (1997)
\\[3mm] [V] S. Vishik \emph{"Generalized Ray-Singer Conjecture I. A manifold with smooth boundary"}, Comm. Math. Phys. 167, 1-102 (1995)
\\[3mm] [Wh] J. H. Whitehead \emph{"Simple homotopy types"}, Amer. J. Math. 72, 1-57 (1950)

\end{document}